\newtheorem{Theorem}{Theorem}
\newtheorem*{Theorem*}{Theorem}
\newtheorem{Corollary}[Theorem]{Corollary}
\newtheorem{Lemma}[Theorem]{Lemma}
\newtheorem{Proposition}[Theorem]{Proposition}
\newtheorem{Problem}{Problem}
\newcommand{\x}{{\mathbf x}}
\newcommand{\y}{{\mathbf y}}
\newcommand{\vv}{{\mathbf v}}
\newcommand{\w}{{\mathbf w}}
\newcommand{\z}{{\mathbf z}}
\newcommand{\R}{{\mathbb R}}
\newcommand{\Z}{{\mathbb Z}}
\newcommand{\N}{{\mathbb N}}
\newcommand{\X}{{\mathbb X}}
\newcommand{\Y}{{\mathcal Y}}
\newcommand{\g}{{\mathcal G}}
\newcommand{\M}{{\mathcal M}}
\newcommand{\B}{{\mathcal B}}
\newcommand{\lam}{\boldsymbol{\lambda}}
\newcommand{\sig}{\boldsymbol{\sigma}}
\def\env@sqcases{%
	\let\@ifnextchar\new@ifnextchar
	\left\lbrack
	\def\arraystretch{1.2}%
	\array{@{}l@{\quad}l@{}}%
}
\begin{document}

\date{}
\title{Max-norm Ramsey Theory}
\author{N\'ora Frankl\thanks{The Open University, UK; Alfréd Rényi Institute of Mathematics, Budapest, Hungary. \newline Email:~\href{mailto:nora.frankl@open.ac.uk}{\tt nora.frankl@open.ac.uk}.} \and 
	Andrey Kupavskii\thanks{G-SCOP, Universit\'e Grenoble-Alpes, CNRS, France; MIPT, Moscow, Russia. Email:~\href{mailto:kupavskii@ya.ru}{\tt kupavskii@ya.ru}.} \and
	Arsenii Sagdeev\thanks{Alfréd Rényi Institute of Mathematics, Budapest, Hungary; MIPT, Moscow, Russia. \newline Email:~\href{mailto:sagdeevarsenii@gmail.com}{\tt sagdeevarsenii@gmail.com}.} }

\maketitle

\begin{abstract}
	Given a metric space $\M$ that contains at least two points, the chromatic number $\chi\left(\R^n_\infty, \M \right)$ is defined as the minimum number of colours needed to colour all points of an $n$-dimensional space $\R^n_\infty$ with the max-norm  such that no isometric copy of $\M$ is monochromatic. The last two authors have recently shown that the value $\chi\left(\R^n_\infty, \M \right)$ grows exponentially for all finite $\M$. In the present paper we refine this result by giving the exact value $\chi_\M$ such that $\chi\left(\R^n_\infty, \M \right) = (\chi_\M+o(1))^n$ for all ‘one-dimensional' $\M$ and for some of their Cartesian products. We also study this question for infinite $\M$. In particular, we construct an infinite $\M$ such that the chromatic number $\chi\left(\R^n_\infty, \M \right)$ tends to infinity as $n \rightarrow \infty$.
\end{abstract}

\section{Introduction}
One of the  classical problems in combinatorial geometry was posed by Nelson in 1950: what is the minimum number of colours $\chi(\R^2)$ needed to colour the Euclidean plane so that no two points of the same colour are at unit distance apart? Currently, the best known bounds are $5\le \chi(\R^2)\le 7$ \cite{degrey, Soifer}. The lower bound is a relatively recent improvement by De Grey \cite{degrey} (quickly after reproved by  Exoo and Ismailescu \cite{Exoo}) on the classical and easy-to-get bound $\chi(\R^2)\ge 4$. This bound  was first noted by Nelson, see Soifer's account of the history of the problem in \cite{Soifer}. This question spurred a great amount of subsequent research, and in particular it is the key precursor to Euclidean Ramsey Theory. Before we discuss different possible generalisations, let us introduce some notation.

Let $\X = (X, \rho_X)$ and $\Y = (Y, \rho_Y)$ be two metric spaces. A subset $Y' \subset X$ is called a {\it copy} of $\Y$ if there exists an {\it isometry} $f: Y \rightarrow Y'$, i.e., a bijection such that $\rho_Y(y_1,y_2) = \rho_X\big(f(y_1), f(y_2)\big)$ for all $y_1, y_2 \in Y$. The {\it chromatic number $\chi(\X,\Y)$ of the space $\X$ with a forbidden subspace $\Y$} is the smallest $k$ such that there is a colouring of the elements of $X$ with $k$ colours and with no monochromatic copy of $\Y$.

In these terms, $\chi(\R^2) = \chi(\R^2,I)$, where $I$ is a two-point metric space. Note that the distance between these two points does not influence the answer. The chromatic number $\chi(\R^n, I)$ of the $n$-dimensional Euclidean space was extensively studied, see the surveys by Raigorodskii \cite{Rag1,Rag2}. The best known lower bound is given by Raigorodskii \cite{Rag3} and the best known upper bound is given by Larman and Rogers \cite{Lar} (see \cite{Pros2020_LR} for an alternative proof). In particular, it grows exponentially with $n$. These results were extended to the case of the chromatic number $\chi(\R^n_p,I)$ of the $\ell_p$-space $\R^n_p$ by Raigorodskii \cite{Rag4} and the second author \cite{Kup} which, again, was shown to grow exponentially with $n$. Recall that the $\ell_p$-distance between two points ${\bf x},{\bf y}\in \R^n$ for $p\ge 1$ is given by the formulas
\begin{align*}\|{\bf x}-{\bf y}\|_p =& \big(|x_1-y_1|^p+\ldots+|x_n-y_n|^p\big)^{1/p},\\[5pt]
\|{\bf x}-{\bf y}\|_{\infty} =&\max_i |x_i-y_i|.\end{align*}
The case $p = \infty$ stands out here because of the folklore equality $\chi( \R^n_\infty, I) = 2^n$ that holds for all $n \in \mathbb{N}$.
In what follows, we shall indicate the metric in $\R^n$ using subscripts to avoid confusion.

The other important direction of generalisations was started by Erd\H os, Graham, Montgomery, Rothschild, Spencer, and Straus in three papers \cite{EGMRSS1,EGMRSS2,EGMRSS3}, and now grew into a separate area named Euclidean Ramsey Theory. As the name suggests, it deals with different Ramsey-type questions about metric subspaces of the Euclidean space $\R^n_2$ with a colouring.

A metric space $\M$, is called {\it $\ell_2$-Ramsey} if $\chi(\R^n_2,\M)\to \infty $ as $n\to \infty$.  Moreover, if this chromatic number grows exponentially, then we call $\M$ {\it exponentially $\ell_2$-Ramsey}. A central problem of Euclidean Ramsey Theory is to classify which metric spaces are Ramsey. At the moment, we know two necessary conditions for $\M$ to be $\ell_2$-Ramsey: $\M$ should be finite \cite[Theorem~19]{EGMRSS2} and lie on a sphere \cite[Theorem~13]{EGMRSS1}. Graham \cite{Graham} conjectured that these necessary conditions are also sufficient. Note that there is an interesting alternative conjecture on the characterisation of Ramsey sets due to Leader, Russell, and Walters \cite{Leader}. However, the classification problem is far from being solved, since only a few metric spaces were shown to be $\ell_2$-Ramsey. K\v{r}\'\i\v{z} \cite{Kriz1,Kriz2} and Cantwell \cite{Cant} proved that the vertex set of any regular polytope (with a natural induced metric) along with other very symmetric sets are $\ell_2$-Ramsey. Frankl and R\"odl \cite{Frankl} showed that the vertex sets of simplices and boxes are exponentially $\ell_2$-Ramsey. For the explicit bounds on the corresponding chromatic numbers see \cite{KupSagZakh,Naslund, Pros2018_ExpRams, Sag2018_CartProd}.

In a recent paper \cite{KupSag} the second and third authors started a systematic investigation of the same question for $\R^n_\infty$. They showed that, unlike the Euclidean case, all finite metric spaces are exponentially $\ell_\infty$-Ramsey.

\begin{Theorem}[Kupavskii--Sagdeev \cite{KupSag}] \label{th KupSag}
 For any finite metric space  $\M$ that contains at least two points, there exists a constant $c=c(\M)>1$ such that for all $n \in \N$ we have
 \[\chi(\R^n_\infty,\M)>c^n.\]
\end{Theorem}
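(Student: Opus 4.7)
The plan is to establish the bound via a density-type strengthening. For each finite $\M$ with $|\M| = k \geq 2$, I would construct a finite witness set $T_n \subset \R^n_\infty$ of size exponential in $n$ with the property that every $\M$-free subset of $T_n$ is small compared to $|T_n|$. Any colouring of $\R^n_\infty$ avoiding monochromatic copies of $\M$, restricted to $T_n$, has each colour class bounded by this maximum $\M$-free subset size, so that $\chi(\R^n_\infty, \M) \geq |T_n|/\alpha(T_n) \geq c^n$ for a suitable $c = c(\M) > 1$.

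As motivation, recall how the two-point case $\M = I$ works. Setting $T_n = \{0,1\}^n$, any two distinct vertices differ in at least one coordinate by exactly $1$, so $T_n$ is a clique of size $2^n$ in the $\ell_\infty$-unit-distance graph, and the maximum $I$-free subset of $T_n$ has size $1$, yielding $\chi(\R^n_\infty, I) \geq 2^n$. When $\M$ is a $k$-point equidistant space of common distance $d$, the same $T_n = \{0,d\}^n$ works: every $k$-subset of $T_n$ consists of points pairwise at $\ell_\infty$-distance $d$, hence is an isometric copy of $\M$; the maximum $\M$-free subset of $T_n$ then has size at most $k-1$, giving $\chi(\R^n_\infty, \M) \geq 2^n/(k-1)$.

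For a general finite $\M = \{m_1, \ldots, m_k\}$ with several distinct pairwise distances, I would first embed $\M$ isometrically into $\R^N_\infty$ via the Fr\'echet map $f(m_i) = (\rho(m_i, m_1), \ldots, \rho(m_i, m_k))$, which gives $N = k$, and set $S = f(\M)$. I would then take $T_n = S^n \subset \R^{Nn}_\infty$, of size $k^n$. Isometric copies of $\M$ inside $T_n$ admit the following combinatorial description: a $k$-tuple in $S^n$ is an isometric copy of $\M$ if and only if its $n$ coordinate projections are non-expansive self-maps of $\M$ such that for every pair of labels, some coordinate projection restricts to an isometry on that pair. The axis-aligned copies, obtained by fixing all but one coordinate and letting it range over $S$, are the simplest instance, but they alone give only a polynomial chromatic-number lower bound; the richer family of non-axis-aligned copies, arising from non-trivial sequences of non-expansive self-maps, is essential.

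The main obstacle is to show that the hypergraph of isometric copies of $\M$ inside $T_n$ has chromatic number exponential in $n$. When $\M$ is equidistant, the hypergraph is complete $k$-uniform and the bound is trivial, as above; in general, I expect a Frankl--R\"odl-style density-increment argument. Concretely, assuming for contradiction that every colour class in $T_n$ has density bounded by $\alpha^n$ for all $n$, one averages over a well-chosen family of shifted sub-embeddings $S^m \hookrightarrow S^n$ and either extracts a monochromatic copy of $\M$ or produces a sub-problem of strictly larger relative density. Iterating forces a contradiction, and the exponential base $c = c(\M) > 1$ emerges from the density-increment rate, which in turn is governed by the combinatorial abundance of pair-isometric non-expansive self-maps of $\M$.
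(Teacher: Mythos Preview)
This theorem is not proved in the present paper; it is quoted from \cite{KupSag} as background (see the paragraph preceding it in the introduction), and the current paper builds on it rather than reproving it. So there is no proof here to compare your attempt against.

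On the merits of your proposal: the overall architecture (finite witness set $T_n$, independence-number bound, pigeonhole via \eqref{eqpighole}) is sound, the Fr\'echet embedding is correct, and your combinatorial description of copies of $\M$ inside $S^n$ in terms of non-expansive self-maps is accurate. But the step you yourself flag as ``the main obstacle'' is a genuine gap, not a routine detail. You propose a Frankl--R\"odl-style density increment and say only that you ``expect'' it to work; no mechanism is given. The nearest off-the-shelf statement about structures in $S^n\cong [k]^n$ is density Hales--Jewett (combinatorial lines are always copies of $\M$, as you note), and its quantitative bounds are nowhere near exponential. To obtain a base $c>1$ you must exploit the much larger family of copies coming from arbitrary non-expansive self-maps of $\M$, and you have not indicated what the increment would be, over which family of sub-embeddings one averages, or why the iteration terminates with positive density. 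As written, the proposal reduces the theorem to a statement of at least comparable difficulty.

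For contrast, the machinery the present paper does develop for the special case of batons (Propositions~\ref{prop no batons} and~\ref{prop indep power}) proceeds by an entirely different route: a poset structure on $\R^n_\infty$ together with Dilworth's theorem yields the exact product formula $\alpha(X^n,\B)=\alpha_{tr}(X,\B)^n$ directly, with no iteration or increment. That argument leans heavily on the one-dimensionality of batons and does not obviously extend to arbitrary $\M$; for the general case you should consult \cite{KupSag} itself.
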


However, apart from very few examples, they did not manage to determine the asymptotically tight value of the base of this exponent. In the present paper we continue this line of research.

\section{New results} \label{Sec1.1}

We split our results into three groups and state each of them in a separate subsection. The first one deals with ‘one-dimensional' metric spaces, called batons. We significantly simplify some of the arguments from \cite{KupSag} for such metric spaces, obtain asymptotically optimal values of $c(\M)$ from Theorem~\ref{th KupSag}, and uncover its connection to a certain well-studied one-dimensional number-theoretical problem. The second group of results extend the scope of these propositions to Cartesian products of arithmetic progressions. Finally, the last group deals with infinite forbidden metric spaces.

The $o$-notation in this section is with respect to $n\to \infty$ everywhere.

\subsection{Batons}

First, let us introduce some notation. Given $k \in \N$, denote $[k] = \left\{1,  2,\, \dots \,,k\right\}$ and $\left[k\right]_0 = \{0\}\cup[k]$. Given a sequence of positive reals $\lambda_1,\, \dots \, , \lambda_k$, set $\lam = (\lambda_1, \,\dots\,, \lambda_k) \in \R^k$. For all $ i \in [k]_0$, define $\sigma_i = \sum_{j=1}^{i}\lambda_j$. Note that $\sigma_0 = 0$. We call the set $\{\sigma_0, \,\dots\,, \sigma_{k}\} \subset \R$ a {\it baton} and denote it by $\B(\lam)$. If $\lambda_i \in \N$ for all $i \in [k]$, we call $\B(\lam)$ to be an {\it integer baton}. In case  $\lambda_1=\dots=\lambda_k=\lambda$, i.e., $\B(\lam)$ is an arithmetic progression with a common difference of $\lambda$, we denote $\B(\lam)$ by  $\B_k(\lambda)$ for shorthand, and if $\lambda = 1$ then we simply use $\B_k$. Batons are the main metric spaces considered in the present paper, as well as in the earlier work \cite{KupSag}.

Whenever it does not cause confusion, we use the same notation for the set and the corresponding metric space. In particular, we assume that batons $\B$ are equipped with the standard metric on $\R$ and use notation $\chi(\R^n_\infty,\B)$. We also note that, in what follows, whenever it is not explicitly stated, we assume that the spaces are equipped with the maximum metric $\ell_\infty$.

For a subset $S \subset \Z^t$, let $d(\Z^t,S)$ be the supremum of upper densities of those subsets $A \subset \Z^t$ for which for all $\x \in \Z^t$, $A$ contains neither $S+\x$ nor $-S+\x$. Several related problems on the junction of combinatorics and number theory have been extensively studied over the last few decades (see, e.g., \cite{Axe, BolJanRior2011, FranklKupSag, Hu22, LiuRob, LiuZhu, SchTul2008, SchTul2010}).

Our first theorem uncovers a relation between the values $d(\Z, \B)$ and $\chi(\R^n, \B)$ for integer batons.

\begin{Theorem} \label{th chi int baton}
	Let $k \in \N$ and $\lam=(\lambda_1,\,\dots\,,\lambda_k)$ be a sequence of positive integers. Set $\B=\B(\lam)$. Then
$$\chi(\R^n_\infty,\B)= \big(d(\Z, \B)^{-1}+o(1)\big)^n.$$
\end{Theorem}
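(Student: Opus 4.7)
My plan rests on the following projection lemma that I would prove first: if $p_0, \ldots, p_k \in \R^n$ is an isometric copy of $\B$ under the $\ell_\infty$ metric, then there is some coordinate $c^*$ such that $p_0(c^*), \ldots, p_k(c^*)$ is itself an isometric copy of either $\B$ or $-\B$ in $\R$. The argument is short: the maximum pairwise distance $\sigma_k = \|p_k - p_0\|_\infty$ is attained on some coordinate $c^*$, and then the chain $\sigma_k = |p_k(c^*) - p_0(c^*)| \leq \sum_{i=1}^k |p_i(c^*) - p_{i-1}(c^*)| \leq \sum_i \lambda_i = \sigma_k$ must be tight at every step.

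For the upper bound I would fix $\varepsilon > 0$ and, by the definition of $d = d(\Z, \B)$, choose a large $T$ together with $A \subset \Z/T\Z$ satisfying $|A|/T \geq d - \varepsilon$ and containing no cyclic translate of $\pm \B$. The product $A^n \subset (\Z/T\Z)^n$ is then independent in the hypergraph of $\ell_\infty$-copies of $\B$ on the torus, because the projection lemma would force any such copy to project onto a $1$-dimensional copy of $\pm\B$ inside $A$. Vertex-transitivity of $(\Z/T\Z)^n$ then gives fractional chromatic number at most $T^n/|A|^n \leq (d-\varepsilon)^{-n}$, and a standard LP-rounding argument for vertex-transitive hypergraphs converts this into an integer chromatic number at most $(d-\varepsilon)^{-n} \cdot \operatorname{poly}(n, T)$. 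I would then lift this torus coloring to $\R^n$ through the periodic map $\x \mapsto (\lfloor x_1 \rfloor \bmod T, \ldots, \lfloor x_n \rfloor \bmod T)$, which yields a proper $\B$-free coloring of $\R^n_\infty$; validity uses that $\B$ is an integer baton, so flooring a $\B$-copy in $\R^n$ lands on a $\B$-copy in $\Z^n$. Taking $T$ polynomial in $n$ absorbs the $\operatorname{poly}(n,T)$ factor into a $(1+o(1))^n$ term, and letting $\varepsilon \to 0$ gives the upper bound.

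For the lower bound I would restrict any $C$-coloring of $\R^n_\infty$ to the finite cube $[N]^n \subset \Z^n$; by pigeonhole some color class has size at least $N^n/C$ and is independent in the $\ell_\infty$-copy hypergraph on $[N]^n$, so $C \geq N^n/\alpha_n(N)$, where $\alpha_n(N)$ is the corresponding independence number. The heart of the matter is the density estimate
\begin{equation*}
\limsup_{N \to \infty} \alpha_n(N)/N^n \leq d(\Z, \B)^n,
\end{equation*}
whose matching lower direction is immediate from the product construction $A^n$ used above. To prove this upper bound I would partition $[N]^n$ into roughly $(N/T)^n$ translates of a $T$-cube (with $T$ a suitable multiple of $\sigma_k$) and argue that $\alpha_n(T) \leq (dT)^n + o(T^n)$ inside each cube, by applying the projection lemma in every coordinate direction simultaneously.

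The hard part is exactly this within-cube density estimate. A naive one-direction-at-a-time application of the $1$-dimensional bound only gives $\alpha_n(T) \leq d \cdot T^n$, losing $n-1$ factors of $d$, so the coupling across all $n$ coordinates is essential. My planned route is an induction on $n$ together with a ``missing slice'' argument: if an independent set inside a $T$-cube had density exceeding $d^n$, then in some direction $c$ all $T$ axis-parallel slices would be dense enough that the projection lemma forces an $\ell_\infty$-copy of $\B$ threading through them, contradicting independence. An alternative would be a Shearer-type entropy inequality applied to a uniformly random element of the independent set, upgrading the conditional line bounds $H(X_c \mid X_{\neq c}) \leq \log(dT)$ into a global bound on $H(X)$. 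Once this density lemma is established, the rest — periodic lifting and vertex-transitive LP rounding — is routine bookkeeping.
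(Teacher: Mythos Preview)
Your upper bound is essentially the paper's argument: a periodic $\B$-tr-free set $A\subset\Z_T$ yields the $\B$-free product $A^n$, an Erd\H os--Rogers covering (which is exactly the LP-rounding you describe in the vertex-transitive setting) gives $\chi(\Z^n_\infty,\B)\le \big(T/|A|\big)^n\cdot\mathrm{poly}(n)$, and Proposition~\ref{prop ZN=RN} lifts this to $\R^n_\infty$. One quibble: there is no need to let $T$ grow with $n$; for each fixed $T$ the polynomial overhead is already $(1+o(1))^n$, and you take $T\to\infty$ afterwards.

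The genuine gap is in the lower bound, precisely at the step you flag as the hard part: the bound $\alpha\big([T]^n,\B\big)\le \alpha_{tr}(T,\B)^n$. Neither of your proposed routes works as written. The fiber bound you state, that for each $c$ the section $\{x_c:(x_{\ne c},x_c)\in S\}$ is $\B$-tr-free and hence has size at most $\alpha_{tr}(T,\B)$, is correct; but feeding $H(X_c\mid X_{\ne c})\le\log\alpha_{tr}(T,\B)$ into Han's inequality yields
\[
\sum_{c=1}^n H(X_c\mid X_{\ne c})\ \le\ H(X),
\]
which is a \emph{lower} bound on $H(X)$, not the upper bound you need. To run the chain rule $H(X)=\sum_c H(X_c\mid X_{<c})$ you would need the \emph{partial} fibers (conditioning only on $X_{<c}$) to be small, and they are not $\B$-tr-free in general. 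Your ``missing slice'' sketch has the same defect: even if many hyperplane slices in direction $c$ are dense, the projection lemma does not let you thread a copy of $\B$ through them, because the second condition of Lemma~\ref{lemma baton embed} requires the displacements in the \emph{other} coordinates to be bounded by $\lambda_r$ at each step, and density alone gives no such alignment.

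The paper's device is different and short. Define the partial order $\x\preceq\y$ iff $\|\y-\x\|_\infty=y_n-x_n$. A chain in $(S,\preceq)$ is isometric (via the $n$-th coordinate) to a $\B$-free subset of $[T]$, so has size at most $\alpha_{tr}(T,\B)$; an antichain is isometric (via the first $n-1$ coordinates) to a $\B$-free subset of $[T]^{n-1}$, so by induction has size at most $\alpha_{tr}(T,\B)^{n-1}$. Dilworth's theorem then gives $|S|\le \alpha_{tr}(T,\B)^n$ exactly, which with \eqref{eqpighole} and Proposition~\ref{prop limit} yields $\chi(\R^n_\infty,\B)\ge d(\Z,\B)^{-n}$ for every $n$. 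This poset step is the one idea your outline is missing.
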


In view of the statement, let us give some background on the value of the density $d(\Z, \B)$. First,  note that we have $d(\Z, \B) \le \frac{k}{k+1}$ for any $\B$ with $k+1$ points by a simple averaging argument. Moreover, this upper bound is tight if and only if $\B$ tiles $\Z$ (for details, see \cite{FranklKupSag}). In particular, for $\B=\B_k$, the set $A=[k]+(k+1)\Z$ is of density $\frac{k}{k+1}$, and contains neither $\B_k+x$ nor $-\B_k+x$ for any $x \in \Z$. As for the lower bound, it is an easy modification of some classical results in the field to show that there exists a constant $c\sim1.577$ such that $d(\Z, \B(\lam)) \ge 1-\frac{c+\ln k}{k}$ for all sequences $\lam=(\lambda_1,\,\dots\,,\lambda_k)$ of $k$ positive integers, see e.g. \cite[Corrolary~3.2]{BolJanRior2011}. In particular, these two bounds ensure that the value $d(\Z, \B(\lam))$ tends to $1$ as $k \rightarrow \infty$ with `almost linear' speed. 

Furthermore, let us note that there is an algorithm that calculates the exact value of $d(\Z, \B)$ for any given integer baton $\B$ with time complexity $O(4^{\text{diam\,} \B})$, see \cite[Remark~5.4]{BolJanRior2011}. So, the statement of Theorem~\ref{th chi int baton} is rather explicit. However, the problem of finding the closed-form expression for $d(\Z, \B(\lambda_1, \,\dots\,,\lambda_k))$ seems to be very hard in general. For $|\B|=2$, i.e. in the case when $\B$ is a two-point metric space $I$, it is almost obvious that $d(\Z, I) = \frac{1}{2}$. Surprisingly, the case of a three-point baton $\B(\lambda_1, \lambda_2)$ is already quite tricky. Schmidt and Tuller~\cite{SchTul2008} conjectured the explicit form of the function $d(\Z, \B(\lambda_1, \lambda_2))$ in terms of $\lambda_1$ and $\lambda_2$. We managed to confirm their conjecture in~\cite{FranklKupSag}. For $|\B| = 4$ it seems challenging even to formulate a conjecture concerning the form of the corresponding density function.

In this paper we also prove an extension of Theorem~\ref{th chi int baton} that states that for an arbitrary (not necessarily integer) baton $\B$, the limit $\chi(\R^n_\infty,\B)^{1/n}$ exists, as well as `determine' its value. See Section~\ref{sec4} for the precise statement. Here we only present one of its neat special cases.

\begin{Theorem} \label{cor lin indep baton}
	Let $k \in \N$ be an integer and $\lam=(\lambda_1,\,\dots\,,\lambda_k)$ be a sequence of real numbers linearly independent over $\Z$.  Then
	$$\chi(\R^n_\infty,\B(\lam)) =\left(\frac{k+1}{k}+o(1)\right)^n.$$
\end{Theorem}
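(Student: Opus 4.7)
The plan is to reduce Theorem~\ref{cor lin indep baton} to Theorem~\ref{th chi int baton} via simultaneous Diophantine approximation, using the $\Q$-linear independence of $\lam$ to find an integer approximant whose baton tiles $\Z$ optimally.

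For the approximation step, I will apply Dirichlet's simultaneous approximation theorem to obtain, for each large $q \in \N$, integers $a_1,\ldots,a_k$ with $|q\lambda_i - a_i| \le q^{-1/k}$. Since $\lam$ is $\Q$-linearly independent, I have the freedom to adjust each $a_i$ by at most $k$, at an additional cost of $O(k)$ in the approximation error, so as to enforce $a_i \equiv 1 \pmod{k+1}$ for every $i$. The partial sums $\sigma'_i = a_1+\cdots+a_i$ then run through the residues $0, 1, \ldots, k$ modulo $k+1$, making $\B(a_1,\ldots,a_k)$ a complete residue system modulo $k+1$. The set $\Z\setminus(k+1)\Z$ has density $k/(k+1)$ and avoids every translate of $\pm\B(a_1,\ldots,a_k)$ (each such translate, being a complete residue system, meets $(k+1)\Z$), so $d(\Z, \B(a_1,\ldots,a_k)) = k/(k+1)$. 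Theorem~\ref{th chi int baton} together with the scale invariance $\chi(\R^n_\infty,\alpha\B)=\chi(\R^n_\infty,\B)$ then yields $\chi(\R^n_\infty, \B') = ((k+1)/k + o_n(1))^n$ for $\B' := \B(a_1/q,\ldots,a_k/q)$, and the coordinates of $\B'$ differ from those of $\B(\lam)$ by only $O(k/q)$.

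The matching lower bound for $\B'$ is furnished by the same Theorem~\ref{th chi int baton} combined with the universal averaging bound $d(\Z, \B) \le k/(k+1)$ for any $(k+1)$-point integer baton. Thus the whole argument reduces to a single transfer step: converting a near-optimal coloring for $\B'$ into one for $\B(\lam)$ (and vice versa) at the cost of only a $(1 + o_n(1))^n$ factor in colors. This perturbation step is the main obstacle, since a coloring avoiding exact copies of $\B'$ need not avoid exact copies of the nearby baton $\B(\lam)$. The planned remedy is to work with \emph{robust} near-optimal colorings --- colorings constant on cubes of a grid of spacing $\delta$ that simultaneously avoid \emph{every} baton whose consecutive gaps lie within $\delta$ of those of $\B'$ --- and to choose $q = q(n)$ fast enough that $k/q \ll \delta$, which makes $\B(\lam)$ itself one of the batons avoided. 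Such robust colorings exist with a $(1+o_n(1))^n$ blow-up in the number of colors, and their construction is carried out inside the general framework of Section~\ref{sec4}, which extends Theorem~\ref{th chi int baton} to arbitrary real batons.
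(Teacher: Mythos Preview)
Your approach via simultaneous Diophantine approximation is genuinely different from the paper's, which deduces the result directly from the general baton theorem (Theorem~\ref{th chi gen baton}): when $\lambda_1,\ldots,\lambda_k$ are $\Z$-linearly independent one has $\langle\lam\rangle\cong\Z^k$, and the image $S=\varphi(\B)$ satisfies $d(\Z^k,S)=k/(k+1)$, witnessed by the set $\{\x\in\Z^k:\sum_i x_i\not\equiv 0\bmod(k+1)\}$.

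However, your plan has a real gap at the transfer step, and it is not merely technical. The paper remarks, immediately after stating this very theorem, that $\chi(\R^n_\infty,\B(\lam))^{1/n}$ is \emph{discontinuous} in $\lam$: for instance $\chi(\R^n_\infty,\B(1,2))=(5/3+o(1))^n$ while $\chi(\R^n_\infty,\B(1,\lambda))=(3/2+o(1))^n$ for irrational $\lambda$ arbitrarily near $2$. So a near-optimal colouring for a nearby rational baton $\B'$ carries no automatic information about $\B(\lam)$; your ``robust colouring'' device would have to supply exactly this missing structure. But you do not construct such colourings --- you defer their existence to ``the general framework of Section~\ref{sec4}'', which is precisely where Theorem~\ref{th chi gen baton} is proved. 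Since the paper derives the present theorem from Theorem~\ref{th chi gen baton} in a few lines, your route does not avoid the work of Section~\ref{sec4}; it presupposes it.

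Two smaller points. First, $\Q$-linear independence of $\lam$ plays no role in your adjustment of the $a_i$ modulo $k+1$; one can always shift each $a_i$ to the nearest residue-$1$ class, and positivity for large $q$ is all that is needed. In fact your argument as written never uses the hypothesis, which is a warning sign: the conclusion is false for dependent $\lam$ such as $(1,2)$. Second, the transfer problem is present for the lower bound as well: Corollary~\ref{cor chi int baton lower} bounds $\chi(\R^n_\infty,\B')$ from below via $\alpha([m]^n,\B')$, and there is no evident approximation argument converting this into a lower bound for $\chi(\R^n_\infty,\B(\lam))$.
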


Observe that the last two results yield that the limit $\chi(\R^n_\infty,\B(\lam))^{1/n}$ is not continuous as a function of $\lam$. Indeed, Theorem~\ref{th chi int baton} implies that $\chi(\R^n_\infty,\B(1,2)) =\left(\frac{5}{3}+o(1)\right)^n$, while it follows from Theorem~\ref{cor lin indep baton} that $\chi(\R^n_\infty,\B(1,\lambda)) =\left(\frac{3}{2}+o(1)\right)^n$ for any irrational $\lambda$ arbitrarily close to $2$.

\subsection{Cartesian products}

For any two metric spaces $\mathcal X = \left( X, \rho_X\right)$ and $\Y = \left( Y, \rho_Y\right)$ we define their {\it Cartesian product} $\mathcal X \times \Y$ as the metric space $\left( X\times Y, \rho\right)$, where
\begin{equation*}
\rho\left(\left(x_1, y_1 \right), \left( x_2, y_2 \right) \right) = \max\left\{ \rho_X\left( x_1, x_2 \right), \rho_Y\left( y_1, y_2 \right)  \right\}
\end{equation*}
for all $x_1,x_2 \in X$ and $y_1,y_2 \in Y$. As usual, we denote $\mathcal X \times \mathcal X$ by $\mathcal X^2$ and so on.

Given $m \in \N$, the distance between any two points of $\B_1^m$ is clearly equal to $1$. So, one can think of $\B_1^m$ as of a `regular simplex'. Given $n \in \N$, it is not hard to see that $\chi(\R_\infty^n, \B_1^m) = \lceil \frac{2^n}{2^m-1} \rceil$, which generalises the aforementioned equality $\chi(\R_\infty^n, \B_1) = \chi(\R_\infty^n) = 2^n$.

More generally, Theorem~\ref{th chi int baton} implies that for all $k \ge 2$, we have $\chi(\R_\infty^n, \B_k) = \left( \frac{k+1}{k}+o(1)\right)^n$. The next theorem shows that the same asymptotic equality holds for Cartesian powers of $\B_k$ as well.

\begin{Theorem} \label{Chi Bkm}
	Given $k \ge 2$ and  $m \in \N$, one has
	\begin{equation*}
		(1+o(1))\frac{(m-1)!k^{m-1}}{n^{m-1}}\Big(\frac{k+1}{k}\Big)^n \le \chi(\R_\infty^n, \B_k^m) \le (1+o(1))(\ln k)n\Big(\frac{k+1}{k}\Big)^n.
	\end{equation*}
	In particular, $\chi(\R_\infty^n, \B_k^m) = \left(\frac{k+1}{k}+o(1)\right)^n$.
\end{Theorem}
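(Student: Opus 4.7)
Since $\B_k$ embeds isometrically into $\B_k^m$ as any of its Cartesian factors, every colouring of $\R^n_\infty$ avoiding monochromatic copies of $\B_k$ also avoids monochromatic copies of $\B_k^m$. Hence $\chi(\R^n_\infty, \B_k^m) \le \chi(\R^n_\infty, \B_k) \le (1+o(1))(\ln k)n\bigl(\tfrac{k+1}{k}\bigr)^n$, where the second inequality is the explicit upper bound on $\chi(\R^n_\infty, \B_k)$ produced by the construction used to prove Theorem~\ref{th chi int baton}.

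\textbf{Lower bound, construction.} I plan to exhibit a $\B_k^m$-avoiding subset $A \subset \Z^n$ of density asymptotic to $\rho := \frac{n^{m-1}}{(m-1)!\,k^{m-1}}\bigl(\frac{k}{k+1}\bigr)^n$. Setting
\[
A = \bigl\{\mathbf{x} \in \Z^n : \#\{i \in [n] : x_i \equiv 0 \pmod{k+1}\} \le m-1\bigr\},
\]
its $(k+1)$-periodic density equals $\sum_{j=0}^{m-1}\binom{n}{j}(k+1)^{-j}(k/(k+1))^{n-j}$, with leading term at $j = m-1$ asymptotic to $\rho$ as $n\to\infty$. For the $\B_k^m$-freeness of $A$ the key structural fact---immediate from the isometry condition $\|\sum_i r_i\mathbf{v}_i\|_\infty = \|\mathbf{r}\|_\infty$ for all $\mathbf{r} \in \{-k,\ldots,k\}^m$, applied with $\mathbf{r} \in \{-1,0,1\}^m$---is that every isometric copy of $\B_k^m$ in $(\Z^n,\|\cdot\|_\infty)$ is generated by vectors $\mathbf{v}_1,\ldots,\mathbf{v}_m \in \{-1,0,1\}^n$ with pairwise disjoint nonempty supports. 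Given any such copy $\mathbf{p} + \bigl\{\sum_i j_i\mathbf{v}_i : \mathbf{j} \in \{0,\ldots,k\}^m\bigr\}$, I pick $a_i \in \operatorname{supp}(\mathbf{v}_i)$ and set $j_i \equiv -v_{i,a_i}\,p_{a_i} \pmod{k+1}$; the resulting point then has all $m$ coordinates $a_1,\ldots,a_m$ congruent to $0 \pmod{k+1}$, violating the defining condition of $A$.

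\textbf{Lower bound, from density to $\chi$.} The remaining step is to establish the matching density upper bound $d^*(\Z^n, \B_k^m) \le (1+o(1))\rho$, since then a colouring with $C$ classes satisfies $C \ge 1/d^*$ by pigeonhole, giving $\chi(\R^n_\infty, \B_k^m) \ge (1-o(1))/\rho$ as claimed. This density upper bound is the main obstacle: naive single-covering (each axis-aligned brick needs a point outside $A$, and each point lies in $\binom{n}{m}(k+1)^m$ such bricks) only yields $d^* \le 1 - (k+1)^{-m}$, which is far too weak for $n \gg m$. The plan is to leverage the huge family of \emph{non-axis-aligned} $\B_k^m$-copies, parametrised by the $\approx 3^n$ possible direction structures in $\{-1,0,1\}^n$, via a weighted double count that groups copies by the sizes $(|\operatorname{supp}(\mathbf{v}_i)|)_{i=1}^m$ of their direction supports and assigns weights balancing contributions across configurations. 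Combined with the 1-dimensional bound $d(\Z,\B_k) = k/(k+1)$ used as a slice estimate, this should show that any $\B_k^m$-avoiding set of density exceeding $\rho(1+o(1))$ would have to contain one of these copies. Pushing this balancing argument through so that the polynomial factor $\frac{n^{m-1}}{(m-1)!\,k^{m-1}}$ comes out correctly is the technical heart of the argument.
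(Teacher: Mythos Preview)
Your upper bound is fine and matches the paper exactly.

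The lower bound, however, has a genuine gap. The structural claim on which your whole argument rests --- that every isometric copy of $\B_k^m$ in $\Z^n_\infty$ is of the affine form $\mathbf{p}+\{\sum_i j_i\mathbf{v}_i:\mathbf j\in[k]_0^m\}$ with $\mathbf v_i\in\{-1,0,1\}^n$ of disjoint supports --- is false. Already for $m=1$ the set $\{(0,0),(1,1),(2,0)\}\subset\Z^2$ is an isometric copy of $\B_2$ that is not an arithmetic progression; by Lemma~\ref{lemma baton embed}, copies of a baton only need \emph{one} coordinate to be an arithmetic progression, while the others may zig-zag. Similar non-affine embeddings of $\B_k^m$ exist for $m\ge 2$ as soon as $n>m$. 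This invalidates both your verification that $A$ is $\B_k^m$-free and, more seriously, the parametrisation ``by the $\approx 3^n$ direction structures'' that your proposed double count is built on.

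Even setting this aside, the part of the argument that actually yields the lower bound on $\chi$ --- the density upper bound $d^*(\Z^n,\B_k^m)\le(1+o(1))\rho$ --- is only stated as a plan. You correctly identify that the naive averaging gives a bound that is exponentially too weak, but the ``weighted double count balancing contributions across support-size configurations'' is not carried out, and it is not clear it can be: once copies are not affine, there is no clean family of $\approx 3^n$ configurations to average over.

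The paper takes a completely different route. It works inside the finite cube $[k]_0^n$ rather than $\Z^n$ and proves the \emph{exact} equality
\[
\alpha([k]_0^n,\B_k^m)=\sum_{i=0}^{m-1}\binom{n}{i}k^{n-i}
\]
(Theorem~\ref{Bkm3}). The upper bound on $\alpha$ --- the hard direction, equivalent to your missing density upper bound --- is obtained by a compression argument: one defines a ``head-moving'' injection on the last coordinate that pushes every point up by one step whenever there is a gap above it, and shows this preserves $\B_k^m$-freeness while allowing an induction on both $m$ and $n$ (Proposition~\ref{Bkm1}). The matching lower bound on $\alpha$ uses the set $\{\x\in[k]_0^n:|\{i:x_i=\lfloor k/2\rfloor\}|<m\}$; crucially the value $t=\lfloor k/2\rfloor$ (not $t=0$) is needed for Lemma~\ref{k=2t}, which handles arbitrary (non-affine) isometric embeddings. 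The lower bound on $\chi$ then follows immediately from the pigeonhole inequality $\chi\ge (k+1)^n/\alpha([k]_0^n,\B_k^m)$.
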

We get such a sharp estimate for the chromatic number because we can determine the independence number of a certain relevant hypergraph {\it exactly}, see Section~\ref{sec5} for details.

It would be very interesting to understand the behaviour of the chromatic number under taking Cartesian products. Clearly, for all $n \in \N$ and for all metric spaces $\Y_1, \Y_2$, the chromatic number $\chi(\R^n_\infty,\Y_1\times \Y_2)$ does not exceed the minimum of $\chi(\R^n_\infty,\Y_1)$ and $\chi(\R^n_\infty,\Y_2)$, since both $\Y_1$ and $\Y_2$ may be considered as subspaces of their Cartesian product. It is interesting that this trivial upper bound is sometimes asymptotically tight, as Theorem~\ref{Chi Bkm} shows. We think that this is always the case for batons and state it in the following form. 

\begin{Problem}\label{prb111}
	Let $\Y_1, \Y_2$ be two arbitrary one-dimensional metric spaces and $c_1, c_2$ be positive reals such that $\chi(\R^n_\infty,\Y_i) = (c_i+o(1))^n$, $i=1,2$. Set $c=\min\{c_1,c_2\}$. Is it true that
	$$\chi(\R^n_\infty,\Y_1\times \Y_2)\ge (c+o(1))^n \, ?$$
\end{Problem}

We suspect that a similar result should hold for Cartesian products of more than two batons as well. In this paper we resolve this conjecture in the particular case when all these batons are arithmetic progressions that may have different common differences.

\begin{Theorem} \label{th chi grid}
	Let $k,m \in \N$ and $\lambda_1,\,\dots\,,\lambda_m$ be positive reals. Then
	\begin{equation*}
		\chi\big(\R_\infty^n, \B_{k}(\lambda_1)\times\dots\times \B_{k}(\lambda_m)\big) = \left(\frac{k+1}{k} +o(1)\right)^n.
	\end{equation*}
\end{Theorem}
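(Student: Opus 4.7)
For each fixed $j \in [m]$, the baton $\B_k(\lambda_j)$ sits isometrically as a metric subspace of $\prod_{i=1}^m \B_k(\lambda_i)$ via $x \mapsto (0,\dots,0,x,0,\dots,0)$ (placing $x$ in the $j$-th factor and the base-point in the others). Any colouring of $\R^n_\infty$ avoiding a monochromatic $\B_k(\lambda_j)$ hence also avoids a monochromatic product, so $\chi(\R^n_\infty,\B_k(\lambda_1)\times\dots\times\B_k(\lambda_m)) \le \chi(\R^n_\infty,\B_k(\lambda_j))$. Because the dilation $x \mapsto x/\lambda_j$ is a bijection of $\R^n$ sending copies of $\B_k(\lambda_j)$ to copies of $\B_k$, chromatic numbers are scale-invariant: $\chi(\R^n_\infty,\B_k(\lambda_j))=\chi(\R^n_\infty,\B_k)$. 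Finally, $\B_k=\B(1,\dots,1)$ is an integer baton with $d(\Z,\B_k)=k/(k+1)$, witnessed by the extremal set $[k]+(k+1)\Z$, so Theorem~\ref{th chi int baton} yields $\chi(\R^n_\infty,\B_k)=\big(\tfrac{k+1}{k}+o(1)\big)^n$.

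\textbf{Lower bound.} The plan is to induct on $m$, with the base case $m=1$ being precisely the bound $\chi(\R^n_\infty,\B_k(\lambda_1))=\big(\tfrac{k+1}{k}+o(1)\big)^n$ just established. For the inductive step I exploit the following direct-sum observation: split $[n]=S_1\sqcup T$ into two blocks with $|S_1|=n_1\approx n/m$ and $|T|=n-n_1$, and note that for any isometric copy $C_1$ of $\B_k(\lambda_1)$ in $\R^{S_1}_\infty$ and any isometric copy $C_2$ of $\B_k(\lambda_2)\times\dots\times\B_k(\lambda_m)$ in $\R^T_\infty$, the set $C_1\oplus C_2=\{(c_1,c_2):c_i\in C_i\}\subset\R^n$ is an isometric copy of the full product in $\R^n_\infty$; this is because the $\ell_\infty$-norm on $\R^{S_1}\oplus\R^T$ is the maximum of the component $\ell_\infty$-norms, which exactly matches the product metric. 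Given a colouring $\phi$ of $\R^n_\infty$ with too few colours, consider each fibre $\phi_y(x):=\phi(x,y)$ for $y\in\R^T$. A supersaturation form of Theorem~\ref{th chi int baton} should guarantee that each $\phi_y$, restricted to a sufficiently large ambient grid, contains many monochromatic copies of $\B_k(\lambda_1)$; pigeonholing over the finitely many (copy-location, colour) pairs then produces a dense set $B\subset\R^T$ all of whose fibres share a common monochromatic copy $C_0$ of colour $c_0$. The inductive hypothesis, combined with the standard inequality $\alpha\le|\text{grid}|/\chi$, implies that $B$ contains an isometric copy $C_2$ of $\B_k(\lambda_2)\times\dots\times\B_k(\lambda_m)$. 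Then $C_0\oplus C_2$ is a monochromatic copy of the full product of colour $c_0$ in $\phi$, contradicting the assumption.

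\textbf{Main obstacle.} The principal difficulty is the quantitative bookkeeping through the $m-1$ inductive iterations: each pigeonhole step incurs a polynomial (in the grid parameter $N$) loss, and one must convert the chromatic-number bound obtained by induction into a density statement strong enough to force a further product copy inside $B$. The enabling ingredient is a Varnavides-type strengthening of Theorem~\ref{th chi int baton}, asserting that any subset of a large grid in $\Z^n$ of density exceeding $(\tfrac{k}{k+1}+\varepsilon)^n$ contains exponentially many isometric copies of $\B_k$. With block sizes $|S_i|\approx n/m$, each block contributes a factor $(\tfrac{k+1}{k})^{(n/m)(1+o(1))}$, and these multiply across the $m$ blocks to give the target $(\tfrac{k+1}{k}+o(1))^n$, with the $o(1)$ slack absorbing all polynomial losses from the pigeonhole steps.
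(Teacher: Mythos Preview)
Your upper bound is correct and matches the paper's argument exactly.

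Your lower bound strategy, however, has a fatal gap that is not a matter of bookkeeping. You assert that for each fibre $\phi_y$ on $\R^{S_1}_\infty$ with $|S_1|=n_1\approx n/m$, a supersaturation form of Theorem~\ref{th chi int baton} forces many monochromatic copies of $\B_k(\lambda_1)$. But any supersaturation statement can only fire once some colour class exceeds the independence threshold, i.e.\ once the number of colours used on the fibre is below roughly $((k+1)/k)^{n_1}$. Your hypothetical bad colouring $\phi$ uses up to $c\approx((k+1)/k-\varepsilon)^n$ colours, and since $((k+1)/k-\varepsilon)^n\gg ((k+1)/k)^{n/m}$ for $m\ge 2$, each fibre $\phi_y$ may simply be a \emph{proper} $\B_k(\lambda_1)$-colouring with no monochromatic copies whatsoever. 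The closing heuristic that ``each block contributes a factor $((k+1)/k)^{n/m}$ and these multiply'' is exactly what fails: chromatic numbers do not tensor this way, because the global colour budget cannot be split across coordinate blocks. Even taking $|S_1|$ close to $n$ does not help, since then the pigeonhole over (copy-location, colour) pairs---there are exponentially many isometric copies of $\B_k$ in $[N]^{n_1}$ by Lemma~\ref{lemma baton embed}---destroys the density needed in $B\subset\R^T$ for the inductive step.

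The paper's lower bound proceeds along a completely different route: it bounds the independence number $\alpha\big(X^n,\B_k(\lambda_1)\times\dots\times\B_k(\lambda_m)\big)$ directly, for a suitably chosen finite $X\subset\R$, via a recursive inequality (Lemma~\ref{lemma tessel}) established by a ``head-moving'' compression on the last coordinate. Solving the recursion (Corollary~\ref{cor11}) yields $\alpha(X^n,\text{product})\le(|X|/(k+1))^n\cdot\sum_{i<m}\binom{n}{i}k^{n-i}$, and the pigeonhole bound $\chi\ge|X^n|/\alpha$ then gives $((k+1)/k+o(1))^n$. The genuine difficulty---and the part your proposal does not touch---is constructing $X$: the recursion needs $X$ to be tiled by each $\B_k(\lambda_i)$ simultaneously, which may be impossible for arbitrary real $\lambda_i$, so the paper relaxes to \emph{$\varepsilon$-almost tessellatable} sets and builds them from the abelian group $\langle\lambda_1,\dots,\lambda_m\rangle\cong\Z^t$ (Lemma~\ref{lemma exist almost tessel}).
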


The following special case of the last result deserves a separate mention.
\begin{Corollary} \label{cor chi box}
	Let $\mathcal H$ be a hyperrectangle, i.e., a Cartesian product of several two-point sets. Then
	\begin{equation*}
		\chi(\R_\infty^n, \mathcal H) = \left(2+o(1)\right)^n.
	\end{equation*}
\end{Corollary}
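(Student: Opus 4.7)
The plan is to derive Corollary~\ref{cor chi box} directly from Theorem~\ref{th chi grid}. By definition, a hyperrectangle $\mathcal H$ is a Cartesian product $\{a_1, b_1\} \times \cdots \times \{a_m, b_m\}$ of $m$ two-point sets. After translating each factor so that its minimum lies at the origin, $\mathcal H$ becomes isometric to $\{0,\lambda_1\}\times\cdots\times\{0,\lambda_m\}$, where $\lambda_i = |b_i - a_i| > 0$. Since each two-point set $\{0,\lambda_i\}$ coincides with the baton $\B_1(\lambda_i)$, we have
$$\mathcal H \cong \B_1(\lambda_1)\times\cdots\times\B_1(\lambda_m).$$

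Applying Theorem~\ref{th chi grid} with $k=1$ and with these positive reals $\lambda_1,\dots,\lambda_m$ then immediately yields
$$\chi(\R_\infty^n, \mathcal H) = \left(\frac{1+1}{1}+o(1)\right)^n = (2+o(1))^n,$$
which is exactly the desired conclusion.

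I do not anticipate any obstacle, since the corollary is a direct specialisation of Theorem~\ref{th chi grid} to the case $k=1$. For context, the upper bound $\chi(\R_\infty^n, \mathcal H)\le 2^n$ is in any case essentially trivial: any single factor $\{0,\lambda_i\}$ embeds isometrically into $\mathcal H$ as a subspace, and the $\ell_\infty$-chromatic number of a two-point metric space is the folklore $2^n$ mentioned in the introduction, so $\chi(\R^n_\infty,\mathcal H)\le\chi(\R^n_\infty,\B_1(\lambda_i))=2^n$. The substantive content of the corollary is therefore the matching lower bound, which is delivered by the proof of Theorem~\ref{th chi grid}; no further argument is required at the level of the corollary itself.
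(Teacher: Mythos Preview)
Your proposal is correct and matches the paper's own treatment: the corollary is stated there explicitly as ``a special case of the last result'' (Theorem~\ref{th chi grid}), i.e., the specialisation to $k=1$, exactly as you do. Your additional remark that the upper bound is already the folklore $\chi(\R^n_\infty,I)=2^n$ is also accurate and consistent with how the upper bound in Theorem~\ref{th chi grid} is obtained.
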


Observe that Theorem~\ref{th chi grid} allows to work with Cartesian products of arithmetic progressions that may also have different number of terms. Indeed, fix $m \in \N$, and two sequences of positive numbers $k_1,\,\dots\,,k_m \in \N$ and $\lambda_1,\,\dots\,,\lambda_m \in \R$. Set $\M = \B_{k_1}(\lambda_1) \times\dots\times \B_{k_m}(\lambda_m)$. Without loss of generality, assume that $k \coloneqq \max_{i}\{k_i\} = k_1$. Clearly, $\B_k(\lambda_1) \subset \M \subset \M',$ where $\M' = \B_{k}(\lambda_1) \times\dots\times \B_{k}(\lambda_m)$. Thus, for all $n \in \N$, we have
\begin{equation*}
	\chi(\R^n_\infty, \M') \le \chi(\R^n_\infty, \M) \le \chi(\R^n_\infty, \B_k(\lambda_1)),
\end{equation*}
and Theorems~\ref{th chi grid} and~\ref{Chi Bkm} imply that both lower and upper bounds are of the form $\left(\frac{k+1}{k}+o(1)\right)^n$.

\subsection{Infinite metric spaces}

Finally, we turn our attention to infinite forbidden metric spaces. In the Euclidean case, the situation is simple: for any  dimension $n$ and any infinite metric space $\mathcal M$, there exists a $2$-colouring of $\R_2^n$ with no monochromatic copy of $\mathcal M$. In fact, a much stronger statement is true, see \cite[Theorem~19]{EGMRSS2}.

The picture in the $\ell_\infty$-norm turns out to be much more complicated. Observe that, unlike in the finite case, not all infinite $\M$ can be embedded into $\R_{\infty}^n$ for some $n \in \N$. However, we may assume without loss of generality that this is the case, since otherwise we simply have $\chi(\R_\infty^n, \M)=1$.

\begin{Proposition} \label{prop upper for infinite intro}
	Given $n \in \N$ and an infinite $\M \subset \R_{\infty}^n$, we have $\chi(\R_\infty^n, \M) \le n+1.$
\end{Proposition}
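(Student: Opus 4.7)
The plan has two stages: first, a Ramsey-type extraction finding a ``one-dimensional'' infinite subset inside every isometric copy of $\M$; then, the construction of an $(n+1)$-colouring that prevents such a subset from being monochromatic. For the first stage, I colour each pair $\{\x,\y\}$ of distinct points of $\M$ by the non-empty set $S(\x,\y) := \{i \in [n] : |x_i - y_i| = \|\x-\y\|_\infty\}$. Since there are only $2^n-1$ possible values, the infinite Ramsey theorem gives an infinite $\M_0 \subseteq \M$ on which $S$ is constant. After relabeling we may assume $1 \in S$; then $\pi_1|_{\M_0}$ is an isometric embedding into $\R$, and $T := \pi_1(\M_0) \subseteq \R$ is infinite. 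For any isometric copy $\M'$ of $\M$, the image of $\M_0$ is an isometric copy of $T$ sitting inside $\M'$, and applying the same Ramsey extraction inside it yields an infinite $A \subseteq \M'$ and an index $j = j(\M') \in [n]$ for which $\pi_j|_A$ is an isometric embedding into $\R$. Equivalently, $A$ is the graph of a $1$-Lipschitz map from the infinite set $\pi_j(A) \subseteq \R$ to $\R^{n-1}_\infty$.

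It therefore suffices to construct $c \colon \R^n \to \{0,1,\dots,n\}$ such that no infinite $1$-Lipschitz graph along any coordinate direction is monochromatic. Since $T$ is an infinite subset of $\R$, it is either unbounded or has an accumulation point, so $\M$ (and hence every copy) contains pairs at arbitrarily large $\ell_\infty$-distance or pairs at arbitrarily small positive $\ell_\infty$-distance. Fixing a scale $\delta > 0$ realised by such a pair, I would define $c$ using $n+1$ floored affine functionals modulo $n+1$, e.g.\ $c(\x) = \bigl(\sum_i a_i \lfloor x_i/\delta \rfloor\bigr) \bmod (n+1)$, tuning the weights $a_i$ so that every $1$-Lipschitz graph in every direction forces $c$ to take at least two distinct values on an infinite subset.

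The main obstacle is this last step. No single linear functional $\sum a_i x_i$ can be non-constant along $1$-Lipschitz graphs in every coordinate direction: for $n \ge 2$, some ``anti-diagonal'' graph of the form $\{(t,-t,0,\dots,0) : t \in T\}$ makes any such functional constant. Overcoming this requires a more intricate construction that genuinely exploits the full infinite structure of $T$ (rather than just a single pair of points) to distinguish all $2n$ possible coordinate directions using only $n+1$ colours. This is precisely the step where the infinitude of $\M$ leverages the chromatic number down from the naïve $2^n$ bound — coming already from forbidding only a two-point subspace — to the claimed $n+1$.
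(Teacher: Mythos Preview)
Your proposal has a genuine gap: you flag the construction of the $(n+1)$-colouring as ``the main obstacle'' and do not supply it. Your suggested form $c(\x) = \bigl(\sum_i a_i \lfloor x_i/\delta\rfloor\bigr) \bmod (n+1)$ is, as you note, doomed by anti-diagonal graphs, and no replacement is offered. Moreover, your double Ramsey reduction leads you to the harder problem of ruling out \emph{all} infinite $1$-Lipschitz graphs along every coordinate direction; the projections of such graphs are arbitrary infinite subsets of $\R$, with no common finite pattern to target. Committing early to a single fixed finite configuration is what makes the colouring constructible.

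The paper does exactly that. First, instead of Ramsey on pairs, it extracts from $\M$ a copy of a fixed \emph{finite} baton $\B$ of size $k(n)+1$, via a Dilworth-based pigeonhole (Proposition~\ref{prop no batons}): any $k^n+1$ points of $\R^n_\infty$ contain an isometric copy of some $(k+1)$-point baton. Second, by a Lov\'asz Local Lemma result (Theorem~\ref{poly}), for $k(n)=(1+o(1))n\ln n$ there is a partition $\R=\bigsqcup_{i=1}^{n+1}C_i$ such that every translate and every reflection of $\B$ meets every $C_i$. Setting $A_i=\R\setminus C_i$, each $A_i$ is $\B$-tr-free, hence $A_i^n\subset\R^n_\infty$ is $\B$-free by Corollary~\ref{cor power free}. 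Finally, the $n+1$ product sets $A_i^n$ cover $\R^n$: any point $\x$ has only $n$ coordinates, so at least one of the $n+1$ disjoint sets $C_i$ is missed by all of them, i.e.\ $\x\in A_i^n$. Assigning colour $i$ to $A_i^n$ gives the required $(n+1)$-colouring. The product structure $A_i^n$ together with the polychromatic partition of $\R$ is precisely the ``more intricate construction'' you were looking for; note that it exploits a long finite baton rather than the full infinite $T$.
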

Surprisingly, this bound is tight for certain geometric progressions. For a real number $q>0$, let us define the following infinite set of reals $\mathcal G(q)$ equipped with the standard metric on $\R$:
$$\g(q) \coloneqq \{0,1,1+q,1+q+q^2,1+q+q^2+q^3,\,\ldots\}.$$

\begin{Theorem} \label{cor geom baton intro}
	Given $n \in \N$, there exists $q_0>0$ such that for any $0<q\le q_0$, we have $$\chi(\R_\infty^n, \g(q)) = n+1.$$
\end{Theorem}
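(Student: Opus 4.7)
The upper bound $\chi(\R^n_\infty, \g(q)) \le n+1$ is immediate from Proposition~\ref{prop upper for infinite intro}, since $\g(q)$ is infinite and embeds in $\R^n_\infty$ (via the first coordinate axis). The content lies in the matching lower bound $\chi(\R^n_\infty, \g(q)) \ge n+1$, valid for $q \le q_0 = q_0(n)$ sufficiently small.

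My plan is to proceed by induction on $n$, with $q_0(n)$ decreasing as $n$ grows. The base case $n = 1$ is trivial. For the inductive step, I assume $\chi(\R^{n-1}_\infty, \g(q)) \ge n$ for $q \le q_0(n-1)$, take $q_0(n) \le q_0(n-1)$ small, and suppose for contradiction that $\phi: \R^n_\infty \to \{1, \ldots, n\}$ admits no monochromatic copy of $\g(q)$. The first observation is that for every $t \in \R$, the restricted colouring $\phi_t(\y) := \phi(\y, t)$ on $H_t = \R^{n-1} \times \{t\}$ must use all $n$ colours; otherwise $\phi_t$ is effectively an $(n-1)$-colouring of $\R^{n-1}_\infty$ with no monochromatic $\g(q)$, contradicting the inductive hypothesis.

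Next, I try to construct a monochromatic $\g(q)$ with spine direction $+e_n$, parametrized by a starting point $(\y_0, t_0)$ of some colour $c$ and a "wiggle sequence" $\w_0 = \mathbf{0}, \w_1, \w_2, \ldots \in \R^{n-1}$ with $\|\w_i - \w_{i-1}\|_\infty \le q^{i-1}$. The corresponding points are $\x_i = (\y_0 + \w_i, t_0 + \sigma_i)$, and monochromaticity means $\phi(\x_i) = c$ for all $i \ge 0$. I build the wiggles step by step: at each step $i$, I must find $\w_i$ in the cube of side $2q^{i-1}$ around $\w_{i-1}$ such that $\phi(\y_0 + \w_i, t_0 + \sigma_i) = c$. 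Discretizing each wiggle cube into a finite grid yields a tree of valid partial extensions, and an infinite branch — produced by König's lemma once the tree is shown to have unbounded depth — provides the desired monochromatic copy.

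The main obstacle is precisely this: showing that the tree has unbounded depth. A failure at step $i$ means that the slice $\phi_{t_0+\sigma_i}$ misses colour $c$ on the relevant wiggle cube of side $2q^{i-1}$. For $n = 2$, assuming no valid extension exists anywhere forces $\R$ to split into two sets whose $q^{i-1}$-thickenings do not intersect, which is impossible by the connectedness of $\R$; hence extensions always exist, and the construction goes through. For $n \ge 3$ the obstruction is more delicate, because an $n$-part partition of $\R^{n-1}$ can perfectly well have all its colour-class thickenings miss a common point. In that regime I would combine the "all $n$ colours on every slice" fact with the inductive hypothesis applied to suitably rescaled subcubes, iterating to chain information across multiple slices $t_0 + \sigma_i$. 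The smallness of $q_0(n)$ is used precisely to align these scales, so that the shrinking wiggle cubes remain large enough relative to the inductively available monochromatic structures for the cross-scale argument to close up — this quantitative calibration across scales is the heart of the argument.
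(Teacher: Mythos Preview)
Your proposal has a genuine gap: for $n\ge 3$ you do not actually carry out the ``heart of the argument'', and the inductive hypothesis you formulate is too weak to support the scheme you sketch.

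Concretely, your inductive hypothesis is $\chi(\R^{n-1}_\infty,\g(q))\ge n$, a statement about colourings of all of $\R^{n-1}$. The only way it enters your argument is the observation that every full hyperplane slice $H_t$ must use all $n$ colours. But in your tree construction, a failure at step $i$ means only that colour $c$ is absent from a \emph{small} $(n-1)$-cube of side $2q^{i-1}$ inside $H_{t_0+\sigma_i}$; your global hypothesis says nothing about such a cube. You acknowledge this (``an $n$-part partition of $\R^{n-1}$ can perfectly well have all its colour-class thickenings miss a common point'') and then propose to ``apply the inductive hypothesis to suitably rescaled subcubes'' --- but your hypothesis, as stated, simply does not apply to subcubes, and you give no substitute mechanism. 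The discretisation/K\"onig step is also problematic: colourings are arbitrary, so a finite grid in a wiggle cube need not witness a colour that is present in that cube.

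The paper closes exactly this gap by strengthening the inductive statement: it proves (Theorem~\ref{thminf1}) that every $n$-colouring of the \emph{bounded} cube $[-\varepsilon,1+\varepsilon]^n$ already contains a monochromatic proper copy of $\g(q)$. With this stronger hypothesis the argument runs as follows. Attempt the greedy construction along the first axis; if it ever fails at step $i$, the cube $I_i$ is $(n-1)$-coloured. Take the \emph{minimal} $j$ for which some axis-parallel $(n-1)$-cube of the appropriate scale is $(n-1)$-coloured; by the strengthened induction (applied to a cube, with $q^{j}\cdot\g$ in place of $\g$) one obtains a monochromatic proper copy $\mathcal C_j$ of $q^{j}\cdot\g$ there. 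Minimality of $j$ guarantees that the next larger cube $H_{j-1}$ sees all $n$ colours, so $\mathcal C_j$ can be extended by one point to a copy $\mathcal C_{j-1}$ of $q^{j-1}\cdot\g$; iterating down to $\mathcal C_1$ and then making one final extension yields the desired copy of $\g$. The two ingredients your outline lacks are (i) the localisation of the inductive statement to bounded cubes, and (ii) the minimal-$j$/backward-extension mechanism that converts a failure of the greedy forward construction into a successful inductive one.
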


This result, however, does not show the existence of a {\it single} infinite metric space with chromatic number tending to infinity, or, in other words, it does not imply that an infinite $\ell_\infty$-Ramsey metric space exists. This is guaranteed by the next theorem.

\begin{Theorem}\label{thminf2 intro}
	There exists $q_0>0$ such that for any $0<q\le q_0$ and any $n \in \N$, we have
	$$\chi(\R_{\infty}^n,\g(q)) > \log_{3}n.$$
\end{Theorem}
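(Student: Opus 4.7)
Plan. I would attempt an inductive proof on the dimension, aiming for the recurrence
\[
\chi(\R_\infty^{3n},\g(q)) \;\ge\; \chi(\R_\infty^{n},\g(q)) + 1
\]
for every $n \ge 1$ and every sufficiently small $q$. Starting from the trivial base $\chi(\R_\infty^{1},\g(q))\ge 1$ (any set with more than one point forces at least one colour) and iterating the recurrence $k$ times yields $\chi(\R_\infty^{3^k},\g(q))\ge k+1 > \log_3(3^k)$; monotonicity of the chromatic number in the ambient dimension then gives the bound for arbitrary $n$.

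For the inductive step, I would argue by contradiction. Assume $c\colon \R_\infty^{3n}\to[k]$ is a colouring with no monochromatic copy of $\g(q)$, where $k = \chi(\R_\infty^{n},\g(q))$. Identify $\R_\infty^{3n}$ with $\R_\infty^{n}\times\R_\infty^{n}\times\R_\infty^{n}$ and, for each $y\in\R_\infty^{n}$, introduce three ``probe'' points $\phi_0(y),\phi_1(y),\phi_2(y)\in\R_\infty^{3n}$ of the form $\phi_j(y) = (y,\z_j,\w_j)$, where the translations $\z_j,\w_j$ in the last two blocks are fixed once and for all so that (i) each $\phi_j$ is an isometric embedding $\R_\infty^{n}\hookrightarrow\R_\infty^{3n}$ (this is automatic because the first block is untouched) and (ii) the triple $\{\phi_0(y),\phi_1(y),\phi_2(y)\}$ is an isometric copy of the three-point initial segment $\{0,1,1+q\}$ of $\g(q)$, with pairwise max-norm distances $1,q,1+q$. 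The crucial claim would be: if for some $y$ all three probes share the same $c$-colour, then this monochromatic initial segment can be completed to a full infinite mono copy of $\g(q)$ inside $\R_\infty^{3n}$, contradicting the hypothesis on $c$. From the impossibility of such a $y$ one should then extract a $(k-1)$-colouring of $\R_\infty^{n}$ that still avoids $\g(q)$, contradicting the choice $k = \chi(\R_\infty^{n},\g(q))$.

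The extension step is the main obstacle. To carry it out I would use the self-similar identity $\g(q) = \{0\}\cup(1+q\cdot\g(q))$: any completion of the monochromatic initial triple must lie inside an axis-aligned box of max-norm side $q/(1-q)$ around the last probe, so the search for the next three points reduces to a rescaled instance of the same probe-construction, performed inside the shrunken tail box using the still-available coordinate freedom. Choosing $q_0$ small enough forces the successive tail boxes to shrink geometrically, and the resulting summable geometric series of error terms guarantees the iteration converges to a genuine infinite isometric copy of $\g(q)$. The secondary obstacle is turning ``no $y$ has three equal probe-colours'' into a genuine reduction from $k$ to $k-1$ colours in a lower-dimensional space, since non-uniformity of the missing colour is an issue; I would handle it by allowing $\z_j,\w_j$ to range over a suitably rich parameter family and invoking a Hales--Jewett/pigeonhole argument over this family to isolate a fixed colour that is uniformly forbidden, at the unavoidable cost of a mild dimension loss that is absorbed by the factor of $3$ in the recurrence.
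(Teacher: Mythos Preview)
Both halves of your dichotomy are where the real work lies, and neither is settled by the outline. The extension claim fails as stated: a monochromatic copy of $\{0,1,1+q\}$ gives no control over the colour of any candidate fourth point, and your ``rescaled instance'' in the tail box would at best locate another monochromatic three-point segment of some possibly different colour, so iterating yields a sequence of shrinking triples whose colours need not agree, not a single infinite monochromatic copy. In the paper this obstacle is handled by first finding (via induction on the number of colours, applied to an $(n/3)$-dimensional face) a monochromatic copy of $q^j\cdot\g$ for the \emph{smallest} $j$ with a certain property, and then exploiting that minimality to guarantee that each surrounding cube at the next larger scale contains all colours, so that one can prepend points of the correct colour one at a time until a copy of $q\cdot\g$ is reached. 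Your recursion has nothing playing the role of this minimality argument.

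The reduction to $k-1$ colours is the more serious gap. Knowing only that for every $y$ the three probe colours are not all equal means the missing colour varies with $y$; turning this into a uniformly missing colour on an $n$-dimensional substructure is precisely where the factor $3$ must be earned, and a gesture towards Hales--Jewett or pigeonhole does not accomplish it. The paper works inside a bounded cube, runs the construction of Theorem~\ref{thminf1} from each of $2^{n-1}$ vertices to obtain a large family of $(n{-}1)$-dimensional cubes each avoiding one colour, pigeonholes to a common missing colour and direction, and then applies the Sauer--Shelah lemma to the associated family of vertex-sets to shatter $n/3$ coordinates; this is what makes the descent to dimension $n/3$ possible. Without a substitute for this VC-dimension step there is no bound on the dimension loss your uniformisation would incur, and the claimed absorption into the factor $3$ is unjustified.
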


It seems to be a challenging and interesting open problem to improve this logarithmic dependence to linear.

\vspace{5mm}

We organise the remainder of the paper as follows. In Section~\ref{sec2}, we state and prove some preliminary results. In Section~\ref{sec3}, we prove Theorem~\ref{th chi int baton}. In Section~\ref{sec4}, we generalise it to arbitrary, not necessarily integer batons. In Section~\ref{sec5}, we consider Cartesian products of arithmetic progressions. In Section~\ref{sec6} we study infinite forbidden metric spaces. Finally, in Section~\ref{sec7}, we make some further comments and state more open problems.

\section{Preliminaries}\label{sec2}

\subsection{Independence numbers}
Let $\X = (X, \rho_X)$ and $\Y = (Y, \rho_Y)$ be two metric spaces. A subset $X'$ of $X$ is called \emph{$\Y$-free} if it does not contain an isometric copy of $\Y$. In case $|X|, |Y| < \infty$ we define the {\it independence number $\alpha(\X,\Y)$ of the space $\X$ with a forbidden subspace $\Y$} as the maximum cardinality of an $\Y$-free subset $X' \subset X$. By using the pigeonhole principle, it is clear that
\begin{equation}\label{eqpighole}
	\chi(\X,\Y) \ge \frac{|X|}{\alpha(\X,\Y)}.
\end{equation}

Let $S, S'$ be two subsets of $\R^t$. We call $S'$ a {\it translation of $S$} if for some $\x \in \R^t$ we have $S' = S+\x$. We call $S'$ a {\it reflection} of $S$ if for some $\x \in \R^t$ we have $S' = -S+\x$. For a set $S$, we call a subset $A \subset \R^t$ to be  {\it $S$-tr-free} if $A$ contains neither a translation nor a reflection  of $S$. (Here, `tr' stands for `translation and reflection'.) Observe that if $t=1$, the subset $A$ is $S$-tr-free if and only if it does not contain an isometric copy of $S$, i.e., it is $S$-free. For any finite subset $X\subset \R^t$, let us define the {\it independence number} $\alpha_{tr}(X, S)$ as
\begin{equation*}
	\alpha_{tr}(X, S) = \max_{Y \subset X}\{|Y|: Y \mbox{ is $S$-tr-free}\}.
\end{equation*}
(Note that we use subscript `tr' to distinguish this $\alpha$ from the $\alpha$ in the isometry sense.)

Given $m \in \N$, we also denote by $\alpha_{tr}(\Z_m^t, S)$ the independence number of the group $\Z_m^t$, i.e., let
\begin{equation*}
	\alpha_{tr}(\Z_m^t, S) = \max_{Y \subset \Z^t_m}\{|Y|: Y+m\Z^t \mbox{ is $S$-tr-free}\}.
\end{equation*}

Now assume that $S$ is a finite subset of $\Z^t$. Recall that the value $d(\Z^t,S)$ is defined as the supremum of upper densities over all $S$-tr-free subsets $A\subset \Z^t$. We need the following natural proposition (see, e.g., \cite[Lemma~6.1]{BolJanRior2011}) that establishes the connection between $d(\Z^t,S)$ and the independence numbers defined above.

\begin{Proposition} \label{prop limit}
	For all $t \in \N$ and for all finite $S \subset \Z^t$, both limits
	\begin{equation*}
		\lim_{m\rightarrow \infty} \frac{\alpha_{tr}([m]^t, S)}{m^t} \ \ \ \ \mbox{and} \ \ \ \ \lim_{m\rightarrow \infty}\frac{\alpha_{tr}(\Z_m^t, S)}{m^t}
	\end{equation*}
	exist and are equal to $d(\Z^t, S)$.
\end{Proposition}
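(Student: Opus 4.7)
The plan is to prove the $\alpha_{tr}$-limit exists and equals $d(\Z^t,S)$, and then sandwich $\beta_{tr}$ between $\alpha_{tr}$-values at shifted arguments, so that both sequences share the same limit. Let $D$ denote the $\ell_\infty$-diameter of $S$, which is a fixed finite integer since $S$ is a finite subset of $\Z^t$.

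For the upper bound $\liminf_m \alpha_{tr}(m,S)/m^t \ge d(\Z^t,S)$, I would argue as follows. Fix any $S$-tr-free set $A \subset \Z^t$ whose upper density is close to $d(\Z^t,S)$. By definition of upper density, one can find arbitrarily large $m$ and a translate $\x$ such that $|A \cap ([m]^t + \x)|/m^t$ is close to the upper density of $A$. Since $A - \x$ is also $S$-tr-free, its intersection with $[m]^t$ witnesses the inequality $\alpha_{tr}(m,S) \ge |A \cap ([m]^t + \x)|$. Passing to the supremum over admissible densities gives the desired bound. Actually, getting $\limsup \alpha_{tr}(m,S)/m^t \le d(\Z^t,S)$ is the content of the next step, which is the more substantive direction.

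For the matching upper bound, I would use a periodic tiling construction. Given an optimal $S$-tr-free subset $Y \subset [m]^t$ of size $\alpha_{tr}(m,S)$, set
\begin{equation*}
A \;=\; Y + (m+D)\Z^t \;\subset\; \Z^t,
\end{equation*}
so $A$ consists of disjoint copies of $Y$ placed in the cells of a lattice of side $m+D$, each separated by a buffer of width $D$. I would then verify that $A$ is $S$-tr-free: any translate or reflection of $S$ has $\ell_\infty$-diameter exactly $D$, so it fits inside an axis-aligned box of side $D$; such a box either lies entirely inside one lattice cell, in which case it would have to be contained in $Y$ (contradicting the $S$-tr-freeness of $Y$), or it meets the buffer region and therefore contains a point outside $A$. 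Since $A$ has (natural) density $\alpha_{tr}(m,S)/(m+D)^t$, we obtain $d(\Z^t,S) \ge \alpha_{tr}(m,S)/(m+D)^t$, and letting $m \to \infty$ yields $\limsup_m \alpha_{tr}(m,S)/m^t \le d(\Z^t,S)$. Combined with the previous step, this proves the first limit.

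Finally, I would pass to $\beta_{tr}$ via a sandwich argument. The inequality $\beta_{tr}(m,S) \le \alpha_{tr}(m,S)$ is immediate, since any $Y \subset \Z_m^t$ with $Y + m\Z^t$ being $S$-tr-free is in particular $S$-tr-free inside $[m]^t$. Conversely, the construction above shows that if $Y \subset [m]^t$ is $S$-tr-free, then viewing $Y$ inside $\Z_{m+D}^t$ makes $Y + (m+D)\Z^t$ an $S$-tr-free periodic set, so $\beta_{tr}(m+D,S) \ge \alpha_{tr}(m,S)$. Rearranging gives $\alpha_{tr}(m-D,S) \le \beta_{tr}(m,S) \le \alpha_{tr}(m,S)$, and dividing by $m^t$ and letting $m \to \infty$ forces $\beta_{tr}(m,S)/m^t \to d(\Z^t,S)$ as well. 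The only mildly delicate point — and the ``main obstacle'' in the otherwise routine argument — is choosing the buffer width large enough to handle both translations and reflections of $S$ simultaneously; taking the width equal to the $\ell_\infty$-diameter $D$ suffices, because any axis-aligned box of side $D$ that meets two adjacent lattice cells must also intersect the buffer strip separating them.
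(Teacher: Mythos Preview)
The paper does not actually prove this proposition; it cites \cite{FranklKupSag} for it. So there is no in-paper argument to compare against, and your outline is essentially the standard one. It is mostly correct, but there is one genuine (though easily repairable) gap.

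The gap is in your first step. From the definition of upper density you can only conclude that for \emph{infinitely many} $m$ there is a translate of $[m]^t$ meeting $A$ in density close to $d(\Z^t,S)$. This yields
\[
\limsup_{m\to\infty}\frac{\alpha_{tr}(m,S)}{m^t}\ \ge\ d(\Z^t,S),
\]
not the $\liminf$ bound you claim. To upgrade this to a $\liminf$ statement you need one more idea, and in fact the buffer construction from your second step already supplies it: once you have a single $m_0$ with $\alpha_{tr}(m_0,S)/m_0^t$ close to $d(\Z^t,S)$, tile any larger cube $[m]^t$ by translates of $[m_0]^t$ separated by buffers of width $D$. The resulting set is still $S$-tr-free (by exactly your buffer argument), so
\[
\alpha_{tr}(m,S)\ \ge\ \Big\lfloor \tfrac{m}{m_0+D}\Big\rfloor^{t}\,\alpha_{tr}(m_0,S),
\]
and hence $\liminf_{m}\alpha_{tr}(m,S)/m^t \ge \alpha_{tr}(m_0,S)/(m_0+D)^t$. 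Equivalently, the periodic set $Y+(m_0+D)\Z^t$ you build in step two has genuine density $\alpha_{tr}(m_0,S)/(m_0+D)^t$, and intersecting it with $[m]^t$ for large $m$ gives the same conclusion. With this patch, combining with your step two gives that the $\alpha_{tr}$-limit exists and equals $d(\Z^t,S)$.

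The sandwich argument for $\beta_{tr}$ is correct as written.
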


\subsection{Integer metric spaces}

We call an arbitrary metric space $\M=(M,\rho_M)$ {\it integer} if $\rho_M(x,y) \in \N$ for all distinct $x,y \in M$. Observe that this notion trivially generalises the aforementioned definition of an integer baton. Let us give the following general and simple proposition about integer metric spaces that we believe is of independent interest.

\begin{Proposition} \label{prop ZN=RN}
	Let $\M=(M,\rho_M)$ be an integer metric space. Then for all $n \in \N$ we have
	\begin{equation*}
		\chi(\R_\infty^n, \M) = \chi(\Z_\infty^n, \M).
	\end{equation*}
\end{Proposition}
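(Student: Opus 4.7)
The plan is to prove the two inequalities separately. The direction $\chi(\Z^n_\infty, \M) \le \chi(\R^n_\infty, \M)$ is immediate by restriction: any valid $k$-coloring of $\R^n_\infty$ restricts to a valid $k$-coloring of $\Z^n_\infty$, since every isometric copy of $\M$ that lives inside $\Z^n$ is in particular an isometric copy inside $\R^n$.

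For the nontrivial direction $\chi(\R^n_\infty, \M) \le \chi(\Z^n_\infty, \M)$, I would take any valid $k$-coloring $c\colon \Z^n \to [k]$ and lift it to a $k$-coloring $\tilde c$ of $\R^n$ by
$$\tilde c(\x) \;:=\; c(\lfloor\x\rfloor),$$
where the floor is applied coordinatewise. Suppose for contradiction that some isometric embedding $f\colon M \to \R^n_\infty$ has $\tilde c \circ f$ constant, and define $g\colon M \to \Z^n$ by $g(y) := \lfloor f(y)\rfloor$. Then $c \circ g$ is constant by construction, so it suffices to show that $g$ is itself an isometric embedding of $\M$ into $\Z^n_\infty$: this produces a monochromatic copy of $\M$ in $\Z^n_\infty$ under $c$, contradicting its choice.

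The core observation is that each coordinate of $f(y)-g(y)$ lies in $[0,1)$, so each coordinate of
$$\bigl(g(y_1) - g(y_2)\bigr) - \bigl(f(y_1) - f(y_2)\bigr) \;=\; \bigl(f(y_2)-g(y_2)\bigr) - \bigl(f(y_1)-g(y_1)\bigr)$$
lies in $(-1,1)$, and hence this vector has $\ell_\infty$-norm strictly less than $1$. The reverse triangle inequality then gives
$$\bigl|\,\|g(y_1)-g(y_2)\|_\infty - \rho_M(y_1,y_2)\,\bigr| \;<\; 1.$$
Since $\M$ is integer, the right-hand distance is a nonnegative integer, and $\|g(y_1)-g(y_2)\|_\infty$ is likewise an integer as the $\ell_\infty$-norm of a difference of vectors in $\Z^n$; the strict inequality therefore forces equality. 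Distance preservation for distinct points automatically yields injectivity, so $g$ is indeed the required isometric embedding of $\M$ into $\Z^n_\infty$.

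The only subtle step is this last one, where the integrality of both $\M$ and of $\ell_\infty$-distances on $\Z^n$ upgrades the approximate distance-preservation given by the floor map (error strictly less than $1$) to an exact equality. I do not foresee any further obstacles.
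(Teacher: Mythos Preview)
Your proof is correct and follows essentially the same approach as the paper: restrict for the easy direction, and for the hard direction push a proper colouring of $\Z^n$ forward to $\R^n$ via coordinatewise floor, then argue that the floor of a monochromatic copy is again an isometric copy because integrality forces the at-most-$1$ distortion to vanish. The only cosmetic difference is that the paper argues coordinatewise (if $x-y\in\Z$ then $\lfloor x\rfloor-\lfloor y\rfloor=x-y$, hence the coordinate achieving the max is preserved exactly while all others stay below it), whereas you pack this into a single application of the reverse triangle inequality; your phrasing is arguably a touch cleaner, but the content is the same.
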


\begin{proof}
Since $\Z^n \subset \R^n$, it follows from the definition that $\chi(\Z_\infty^n, \M) \le \chi(\R_\infty^n, \M)$.
	
To prove the reverse inequality we consider a proper colouring of $\Z^n$ with $\chi(\Z_\infty^n, \M)$ colours and then assign to each unit `cell' the colour of its `bottom left' vertex. That is, to each $\x=(x_1,\,\dots\,,x_n)\in \R^n$ we assign the colour of its {\it rounding} $\lfloor\x\rfloor = (\lfloor x_1 \rfloor,\,\dots\,,\lfloor x_n \rfloor) \in \Z^n$. It remains only to check that this gives us a proper colouring of $\R^n$.

Note that for any two reals $x$ and $y$, we have $$\lfloor x \rfloor-\lfloor y \rfloor-1<x-y< \lfloor x \rfloor-\lfloor y \rfloor+1,$$ which implies that $x-y = \lfloor x \rfloor-\lfloor y \rfloor$ whenever $x-y$ is an integer. Applying this over all coordinates, it is easy to see that the same holds for vectors $\x,\y\in \R^n$: whenever we have $\|\x-\y\|_\infty\in \N$, we also have $$\|\x-\y\|_\infty = \big\|\lfloor\x\rfloor-\lfloor\y\rfloor\big\|_\infty.$$

Now we assume that there is a monochromatic isometric copy $M' \subset \R^n$ of $\M$. Using the property of integer parts above, we conclude that $\{\lfloor\x\rfloor : \x \in M'\} \subset \Z^n$ is also a monochromatic copy of $\M$, a contradiction.
\end{proof}

\subsection{Baton-free Cartesian powers}

The following lemma gives a necessary and sufficient condition for an arbitrary set of points of $\R^n$ to be a copy of a given baton. One of its simple corollaries illustrates the connection between the values $\chi(\R_\infty^n, \B)$ and $d(\Z,\B)$. We will use the notation $\x = (x_1,\,\dots\,,x_n)$ for points $\x\in \mathbb{R}^n$.

\begin{Lemma} \label{lemma baton embed}
Let $k,n \in \N$ and $\lam=(\lambda_1,\,\dots\,,\lambda_k)$ be a sequence of positive reals. The sequence $(\x^0,\,\ldots\,, \x^k)$ of points in $\R^n$ is isometric to $\B(\lam)$ (in that order) if and only if the following two conditions hold. First, there exists an $i\in[n]$ such that the sequence $(x_i^0,\,\ldots\,, x_i^k) \subset \R$ is isometric to $\B(\lam)$ (i.e., is either a translation or a reflection of $\B(\lam)$). Second, for any $j\in [n]$ and any $r \in [k]$ we have  $|x_j^{r}-x_j^{r-1}| \le |x_i^r-x_i^{r-1}| = \lambda_r.$
\end{Lemma}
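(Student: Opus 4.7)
The plan is to prove the two directions separately; both reduce to careful bookkeeping with the $\ell_\infty$ metric, and the only genuinely subtle point is the existence of a \emph{single} coordinate $i$ witnessing the baton in the necessity direction.

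For the sufficiency direction, I would assume the two conditions and show that $\|\x^r-\x^p\|_\infty=\sigma_r-\sigma_p$ for all $0\le p<r\le k$. Without loss of generality the coordinate $i$ gives a translation (else reflect it), so $x_i^r-x_i^{r-1}=\lambda_r$ for each $r$. Telescoping yields $|x_i^r-x_i^p|=\sigma_r-\sigma_p$, providing the lower bound on $\|\x^r-\x^p\|_\infty$. For the matching upper bound, in every coordinate $j$ the triangle inequality gives
\[
|x_j^r-x_j^p|\;\le\;\sum_{s=p+1}^r |x_j^s-x_j^{s-1}|\;\le\;\sum_{s=p+1}^r \lambda_s\;=\;\sigma_r-\sigma_p,
\]
where the second step uses the second hypothesis of the lemma. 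Taking the maximum over $j$ finishes this direction.

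For the necessity direction, suppose $(\x^0,\dots,\x^k)$ is an isometric copy of $\B(\lam)$. The total $\ell_\infty$ distance $\|\x^k-\x^0\|_\infty=\sigma_k$ is attained in some coordinate $i$, so $|x_i^k-x_i^0|=\sigma_k=\sum_{r=1}^k \lambda_r$. Replacing $x_i^\cdot$ by its negation if necessary (which is how the reflection enters), I may assume $x_i^k-x_i^0=\sigma_k$. Now I expand this as the telescoping sum $\sum_{r=1}^k(x_i^r-x_i^{r-1})$ and observe that each term satisfies $|x_i^r-x_i^{r-1}|\le\|\x^r-\x^{r-1}\|_\infty=\lambda_r$. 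An extremal-value argument then forces $x_i^r-x_i^{r-1}=\lambda_r$ for every $r$, since otherwise the sum would be strictly less than $\sum_r \lambda_r$. This delivers the required coordinate $i$ realising $\B(\lam)$ as a translation, and for any other coordinate $j$ the bound $|x_j^r-x_j^{r-1}|\le\|\x^r-\x^{r-1}\|_\infty=\lambda_r$ is the second condition.

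The main obstacle, such as it is, lies in the necessity direction: a priori, different consecutive pairs $(\x^{r-1},\x^r)$ could attain the max $\ell_\infty$-distance in different coordinates, and one must rule this out. The fix is to avoid working locally with consecutive pairs and instead use the global distance $\|\x^k-\x^0\|_\infty=\sigma_k$, together with the sign-fixing reflection, so that the ``all increments are positive and maximal'' conclusion in coordinate $i$ comes from a single extremality argument rather than from any case analysis.
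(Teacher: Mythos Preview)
Your proof is correct and follows essentially the same approach as the paper. Both directions match: the sufficiency argument via the triangle inequality is identical, and in the necessity direction both you and the paper select a coordinate $i$ attaining $|x_i^k-x_i^0|=\sigma_k$ and use an extremality argument to force all intermediate increments to be tight---you phrase it as a telescoping sum with term-wise bounds, while the paper pins each $x_i^r$ between the constraints $|x_i^r-x_i^0|\le\sigma_r$ and $|x_i^k-x_i^r|\le\sigma_k-\sigma_r$, which amounts to the same thing.
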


\begin{proof}
Let the sequence $(\x^0,\,\ldots\,, \x^k)$ of points in $\R^n$ be an isometric copy of $\B(\lam)$. Using similar notation as before,  set $\sigma_r = \sum_{t=1}^{r} \lambda_t$ for all $r \in [k]_0$. Let $i$ be a coordinate such that $|x^k_i-x^0_i| =\sigma_k$ (there must exist at least one such coordinate, since $\|\x^k-\x^0\|_\infty = \sigma_{k}$). Then it should be clear that, for each $r=1,\,\ldots\,, k-1$, we have a unique choice of $x_i^r$ so that $|x^r_i-x^0_i| \le \|\x^r-\x^0\|_\infty = \sigma_r$ and $|x^k_i-x^r_i| \le \|\x^k-\x^r\|_\infty = \sigma_k-\sigma_r$. Moreover, both inequalities must hold with equality in that case. This concludes the proof of the first part. The second part of the statement is trivial, since for any $j \in [n]$ and any $r \in [k]$, we clearly have $|x_j^r-x_j^{r-1}| \le \|\x^r-\x^{r-1}\|_\infty = \lambda_r = |x_i^r-x_i^{r-1}|$.

To prove the opposite direction, let us assume that the sequence $(\x^0,\,\ldots\,, \x^k)$ of points in $\R^n$ satisfies both conditions of the lemma. Given $0 \le l < r \le k$, observe that the first condition implies that $|x_i^r-x_i^l| = \sigma_r-\sigma_l$. Moreover, from the second condition and the triangle inequality it follows that
$$|x_j^r-x_j^l| \le \sum_{t=l+1}^{r} |x_j^t-x_j^{t-1}| \le \sum_{t=l+1}^{r} |x_i^t-x_i^{t-1}| = \sum_{t=l+1}^{r}(\sigma_t-\sigma_{t-1}) = \sigma_r-\sigma_l$$
for all $j \in [n]$. So, we have $\|\x^r-\x^l\|_{\infty} = |x_i^r-x_i^l| = \sigma_r-\sigma_l$. Thus, the sequence $(\x^0,\,\ldots\,, \x^k)$ is indeed an isometric copy of $\B(\lam)$.
\end{proof}

The following corollary is immediate from the lemma.

\begin{Corollary} \label{cor power free}
	Let $k \in \N$ and $\lam=(\lambda_1,\,\dots\,,\lambda_k)$ be a sequence of positive reals. Set $\B=\B(\lam)$. Then for all $n \in \N$ and for all $A \subset \R$, the set $A^n \subset \R^n$ is $\B$-free if and only if $A$ is $\B$-tr-free.
\end{Corollary}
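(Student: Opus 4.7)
The plan is to derive this corollary as an immediate two-sided consequence of Lemma~\ref{lemma baton embed}, handling each implication by its contrapositive.

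For the implication ``$A$ is $\B$-tr-free $\Rightarrow$ $A^n$ is $\B$-free'', I would suppose for contradiction that $A^n$ contains an isometric copy of $\B$, say realised by points $\x^0,\ldots,\x^k \in A^n$ in that order. Lemma~\ref{lemma baton embed} then hands me a coordinate $i \in [n]$ such that the sequence $(x^0_i,\ldots,x^k_i) \subset \R$ is either a translation or a reflection of $\B(\lam)$. Since every coordinate of every $\x^r$ lies in $A$, this sequence is contained in $A$, so $A$ contains a translation or reflection of $\B$, contradicting the $\B$-tr-freeness assumption.

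For the reverse implication, I would argue contrapositively: if $A$ fails to be $\B$-tr-free, then $A$ contains a sequence $(x^0,\ldots,x^k)$ of reals isometric to $\B$. I then lift this to the ``diagonal'' in $A^n$ by setting $\x^r=(x^r,x^r,\ldots,x^r)$ for each $r\in[k]_0$. The first condition of Lemma~\ref{lemma baton embed} is satisfied by taking $i=1$, and the second condition holds trivially since $|x^r_j-x^{r-1}_j|=|x^r-x^{r-1}|=\lambda_r$ for every $j\in[n]$. Hence $(\x^0,\ldots,\x^k)$ is an isometric copy of $\B(\lam)$ sitting inside $A^n$, so $A^n$ is not $\B$-free.

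There is really no technical obstacle here; the only thing to be mindful of is that ``$\B$-tr-free'' for $A\subset\R$ is exactly the same as ``contains no isometric copy of $\B$ in $\R$'' (translations and reflections exhaust the isometries of $\R$), so both directions genuinely line up with the statement of Lemma~\ref{lemma baton embed}. Given that, the corollary follows in a couple of lines.
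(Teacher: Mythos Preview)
Your proof is correct and follows exactly the approach the paper intends: the paper simply states that the corollary is ``immediate from the lemma,'' and you have spelled out precisely the two contrapositive applications of Lemma~\ref{lemma baton embed} that this entails. The diagonal lift for the reverse direction is the natural construction (indeed the diagonal map $x\mapsto(x,\ldots,x)$ is an isometric embedding of $\R$ into $\R^n_\infty$, so one could even bypass the lemma there), and your remark that $\B$-tr-free coincides with $\B$-free for subsets of $\R$ is exactly the observation needed to make both directions line up.
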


\subsection{$\R_{\infty}^n$ as a poset}

Here we use a standard (see, e.g., \cite{BlokhWil, GKS, Swan}) yet powerful point of view on the $n$-dimensional space $\R_{\infty}^n$ as on a poset to obtain some corollaries. First, let us recall the necessary basic notions.

A {\it partially ordered set}, or {\it poset} for shorthand, is a pair $P = (X,\preceq)$, where $X$ is a set and $\preceq$ is a reflexive, transitive and antisymmetric binary relation over it. If $x\preceq y$ or $y\preceq x$, then we say that $x,y$ are {\it comparable}, and we call them {\it incomparable} otherwise. A set of pairwise comparable elements is called a {\it chain}, and a set of pairwise incomparable elements is called an {\it antichain}. The sizes of the largest chain and antichain are called the {\it length $\ell(P)$} and {\it width $w(P)$} of the poset respectively. The following theorem is one of the cornerstones of poset theory.

\begin{Theorem}[Dilworth's theorem \cite{Dilworth}] \label{Dilworth's theorem}
	Let $P$  be an arbitrary finite poset. Then the width $w(P)$ of $P$ is equal to the minimum number of chains that altogether cover $P$. In particular, $|P| \le \ell(P)w(P)$.
\end{Theorem}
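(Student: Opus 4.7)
The plan is to prove Dilworth's theorem by induction on $|P|$, after first noting that the inequality $|P| \le \ell(P)w(P)$ is immediate from the main statement, since covering $P$ by $w(P)$ chains, each of size at most $\ell(P)$, gives $|P|\le\ell(P)w(P)$. The easy direction of the main statement is also clear: any chain cover must use at least $w(P)$ chains, because a chain meets any antichain in at most one element, so a chain cover of size $<w(P)$ could not account for all $w(P)$ elements of a maximum antichain. So the real work is to exhibit a cover by exactly $w(P)$ chains.

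For the inductive step, write $w=w(P)$ and pick a maximal chain $C\subseteq P$ (maximal in the sense that no element of $P$ can be added to $C$ to form a larger chain). Split into two cases according to whether the width of $P\setminus C$ drops below $w$. If $w(P\setminus C)<w$, then by induction $P\setminus C$ can be covered by $w-1$ chains, and adding $C$ itself gives a cover by $w$ chains of $P$, as required.

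Otherwise, fix an antichain $A=\{a_1,\dots,a_w\}\subseteq P\setminus C$ and consider its upper and lower closures
\[
S^+=\{x\in P:x\succeq a_i\text{ for some }i\},\qquad S^-=\{x\in P:x\preceq a_i\text{ for some }i\}.
\]
I would verify three things: (i) $S^+\cup S^-=P$, because any $x\in P$ incomparable to every $a_i$ would extend $A$ to an antichain of size $w+1$; (ii) $S^+\cap S^-=A$, since $a_i\preceq x\preceq a_j$ forces $a_i\preceq a_j$ and hence $i=j$ because $A$ is an antichain; (iii) the maximum element of $C$ lies in $S^+\setminus S^-$ and its minimum lies in $S^-\setminus S^+$, so that both $S^+$ and $S^-$ are \emph{proper} subsets of $P$. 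Then, since $A$ is still an antichain of size $w$ in each of $S^+$ and $S^-$, both have width exactly $w$, and the induction hypothesis applies to give chain covers $C_1^+,\dots,C_w^+$ of $S^+$ and $C_1^-,\dots,C_w^-$ of $S^-$. Each such chain must meet $A$ in exactly one point (by the easy direction applied inside $S^\pm$), so after relabelling we can assume $a_i\in C_i^+\cap C_i^-$. Concatenating along $A$, the sets $C_i:=C_i^-\cup C_i^+$ are chains in $P$ and together cover $P=S^-\cup S^+$, producing the desired cover by $w$ chains.

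The main obstacle is step (iii): the inductive engine only turns if both $S^+$ and $S^-$ are strictly smaller than $P$, and this is exactly where the maximality of $C$ is essential. If the maximum of $C$ were in $S^-$, then either it equals some $a_i$ (impossible as $A\cap C=\emptyset$) or it is strictly below some $a_i$, letting us append $a_i$ to $C$ and contradicting maximality; symmetrically for the minimum. Once this is in place, the rest of the argument is purely combinatorial bookkeeping, and the corollary $|P|\le\ell(P)w(P)$ follows without extra work.
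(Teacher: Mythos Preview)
Your proof is correct; it is essentially the well-known short inductive argument (often attributed to Perles or Galvin). The one step you pass over a bit quickly is why $C_i^-\cup C_i^+$ is a chain: this needs the observation that $a_i$ is the minimum of $C_i^+$ (and the maximum of $C_i^-$), which follows because any $x\in C_i^+$ with $x\prec a_i$ would, being in $S^+$, satisfy $a_j\preceq x\prec a_i$ for some $j$, contradicting that $A$ is an antichain. With that remark the argument is complete.

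There is nothing to compare against in the paper: Dilworth's theorem is stated there with a citation to \cite{Dilworth} and is used as a black box (in Proposition~\ref{prop no batons} and Proposition~\ref{prop indep power}), but the paper does not supply its own proof.
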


We define a binary relation $\preceq$ over $\R_{\infty}^n$ by
\begin{equation} \label{eq poset}
	\x \preceq \y \mbox{ if and only if } \|\y-\x\|_\infty = y_n-x_n
\end{equation}
for all $\x, \y \in \R^n$. One can easily check that $(\R_{\infty}^n,\preceq)$ is a poset. Indeed, reflexivity and antisymmetry are immediate from the definition, and transitivity follows from the triangle inequality.

Now we use this partial order to prove that any sufficiently large subset of $\R_{\infty}^n$ with the induced maximum metric contains an isometric copy of some `long' baton.

\begin{Proposition} \label{prop no batons}
	Given $k,n \in \N$ and $A\subset \R_{\infty}^n$, if $A$ contains no isometric copies of $(k+1)$-point batons, then $|A| \le k^n$. Moreover, this bound is tight.
\end{Proposition}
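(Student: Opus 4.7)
The plan is induction on $n$, viewing $A$ as a finite subposet of $(\R^n_\infty,\preceq)$ defined in~(\ref{eq poset}) and applying the consequence $|P| \le \ell(P) w(P)$ of Theorem~\ref{Dilworth's theorem}. The base case $n=1$ is immediate: every $(k+1)$-element subset of $\R$ is itself a $(k+1)$-point baton, so a baton-free $A \subset \R$ satisfies $|A| \le k$.

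The first task is to bound the length $\ell(A) \le k$. Let $\x^0 \preceq \x^1 \preceq \cdots \preceq \x^r$ be a chain in $A$. Unrolling the definition of $\preceq$ together with transitivity gives $\|\x^j-\x^i\|_\infty = x^j_n - x^i_n$ whenever $i\le j$, so the map $\x^t \mapsto x^t_n$ is an isometric bijection from the chain onto the $(r+1)$-point subset $\{x^0_n,\ldots,x^r_n\}$ of $\R$. The latter is an $(r+1)$-point baton, so the chain realises such a baton inside $A$; the baton-freeness hypothesis therefore forces $\ell(A) \le k$.

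The width bound $w(A) \le k^{n-1}$ is where induction enters. Let $B \subset A$ be an antichain, and let $\pi\colon B \to \R^{n-1}$ be the projection that forgets the last coordinate. For any two distinct $\x,\y \in B$, incomparability gives $\|\y-\x\|_\infty > |y_n - x_n|$, so the max defining $\|\y-\x\|_\infty$ is attained at some coordinate $i<n$; this single observation simultaneously makes $\pi$ injective on $B$ and distance-preserving in $\ell_\infty$. Any isometric copy of a $(k+1)$-point baton in $\pi(B) \subset \R^{n-1}_\infty$ would lift through $\pi^{-1}$ to one inside $A$, so $\pi(B)$ is itself baton-free of order $k+1$, and the induction hypothesis yields $|B| = |\pi(B)| \le k^{n-1}$. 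Combining, $|A| \le \ell(A)\cdot w(A) \le k \cdot k^{n-1} = k^n$.

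For tightness I would take $A = \{0,1,\ldots,k-1\}^n$, which has exactly $k^n$ points. By Lemma~\ref{lemma baton embed}, any isometric copy of a $(k+1)$-point baton in $\R^n_\infty$ requires some coordinate along which its $k+1$ points assume $k+1$ distinct real values; since every coordinate of $A$ takes only $k$ values, no such copy can exist. I do not foresee any real obstacle: the only mildly clever ingredient is the observation that the antichain property is exactly what lets one forget the last coordinate without distorting $\ell_\infty$ distances, which is a one-line check.
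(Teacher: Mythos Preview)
Your proof is correct and follows essentially the same route as the paper's: induction on $n$, bounding the length of the poset $(A,\preceq)$ by $k$ via the observation that chains are isometric to batons through their last coordinate, bounding the width by $k^{n-1}$ via the fact that projection onto the first $n-1$ coordinates is an isometry on antichains, and then invoking Dilworth. Your tightness example $\{0,\ldots,k-1\}^n$ is the paper's $[k]^n$ up to translation, justified in the same way via Lemma~\ref{lemma baton embed}.
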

\begin{proof}
	The proof is by induction in $n$. As any $k+1$ points on a line form a translation of some $(k+1)$-point baton, the $n=1$ case is trivial.
	To prove the induction step, we consider a poset $(A,\preceq)$ with the partial order defined above.
	
	Observe that any chain $\x^0\preceq\dots\preceq\x^k$ of size $k+1$ would be isometric to a $(k+1)$-point baton. (To see this, apply Lemma~\ref{lemma baton embed} with $i=n$.) Therefore, the length of the poset does not exceed $k$.
	Next, we bound the width. Given $\x=(x_1,\,\dots\,,x_n) \in A$, we define $\x' = (x_1,\,\dots\,,x_{n-1}) \in \R^{n-1}$. It is not hard to see that $\|\x-\y\|_\infty = \|\x'-\y'\|_\infty$ for all incomparable $\x,\y \in A$. Therefore, projecting an antichain on the hyperplane induced by the first $n-1$ coordinates is an injection, and it gives a subset of $\R_{\infty}^{n-1}$ with no  isometric copies of $(k+1)$-point batons. Thus, the cardinality of this projection is at most $k^{n-1}$ by the induction hypothesis, and so is the width of $(A,\preceq)$. Now Dilworth's theorem yields that $|A| \le k \cdot k^{n-1} = k^n$.
	
	To see the tightness of this upper bound, it is sufficient to consider $[k]^n \subset \R_{\infty}^n$. Indeed, the size of the projection of $[k]^n$ on each axis equals $k$. Hence, Lemma~\ref{lemma baton embed} implies that $[k]^n$ does not contain isometric copies of $(k+1)$-point batons.
\end{proof}

This general Ramsey-type proposition has the following neat geometric corollary.

\begin{Corollary} \label{cor box embed}
	Given $n \in \N$, let $\mathcal H \subset \R_{\infty}^n$ be a set of all $2^n$ vertices of an $n$-dimensional axis parallel hyperrectangle. Then $\mathcal H$ cannot be isometrically embedded into $\R_{\infty}^{n-1}$.
\end{Corollary}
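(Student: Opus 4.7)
The plan is to derive the corollary directly from Proposition~\ref{prop no batons} by showing that the hyperrectangle $\mathcal H$, viewed as an abstract metric space, contains no isometric copy of any three-point baton.

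First, I would parametrise $\mathcal H=\{0,a_1\}\times\cdots\times\{0,a_n\}$ for some positive reals $a_1,\dots,a_n$ and argue the ``no $3$-point baton'' claim via Lemma~\ref{lemma baton embed}. Suppose $\x^0,\x^1,\x^2\in\mathcal H$ were isometric to some baton $\B(\lambda_1,\lambda_2)$ with $\lambda_1,\lambda_2>0$. By Lemma~\ref{lemma baton embed} there is a coordinate $i\in[n]$ such that $(x_i^0,x_i^1,x_i^2)$ is itself isometric to $\B(\lambda_1,\lambda_2)$; in particular it takes three distinct values. But each coordinate of a point in $\mathcal H$ takes at most two values (namely $0$ and $a_i$), a contradiction. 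Hence no three points of $\mathcal H$ form a baton, and this is a purely metric statement, so it is preserved under any isometric embedding.

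Second, assume for contradiction that there is an isometric embedding $f\colon\mathcal H\to\R_\infty^{n-1}$, and set $\mathcal H'=f(\mathcal H)\subset\R_\infty^{n-1}$. Then $|\mathcal H'|=|\mathcal H|=2^n$, and by the previous paragraph $\mathcal H'$ contains no isometric copy of a three-point baton. Applying Proposition~\ref{prop no batons} with $k=2$ in dimension $n-1$ yields $|\mathcal H'|\le 2^{n-1}$, which contradicts $|\mathcal H'|=2^n$.

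There is no real obstacle: the whole argument is a two-line reduction once we observe that, in a hyperrectangle, each coordinate attains only two values, so Lemma~\ref{lemma baton embed} forbids every baton of length more than two. The only mildly subtle point to state carefully is that ``contains no three-point baton'' is an isometry-invariant property, so it transfers from $\mathcal H$ to the image $\mathcal H'\subset\R_\infty^{n-1}$ even though the image need no longer be axis-parallel.
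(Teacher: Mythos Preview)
Your proposal is correct and follows essentially the same approach as the paper's own proof: both use Lemma~\ref{lemma baton embed} to observe that, since each coordinate of $\mathcal H$ takes only two values, $\mathcal H$ contains no isometric copy of a three-point baton, and then apply Proposition~\ref{prop no batons} with $k=2$ in $\R_\infty^{n-1}$ to get the contradiction $2^n\le 2^{n-1}$. Your write-up is slightly more explicit about the isometry-invariance step, but the argument is the same.
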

\begin{proof}
	Again, the size of the projection of $\mathcal H$ on each axis equals $2$. Hence, Lemma~\ref{lemma baton embed} implies that $\mathcal H$ does not contain isometric copies of $3$-point batons. However, Proposition~\ref{prop no batons} shows\footnote{Actually, this proves even more: no $2^{n-1}+1$ points of $\mathcal H$ can be isometrically embedded into $\R_{\infty}^{n-1}$.} that the cardinality of any subset of $\R_{\infty}^{n-1}$ with the same property does not exceed $2^{n-1} < 2^n$. 
\end{proof}

\section{Integer batons: proof of Theorem \ref{th chi int baton}}\label{sec3}

We start with the following crucial proposition.

\begin{Proposition} \label{prop indep power}
	Let $\B$ be an arbitrary baton and $X\subset \R$ be an arbitrary finite set. Then for all $n \in \N$, we have $\alpha(X^n,\B) = \alpha_{tr}(X,\B)^n$.
\end{Proposition}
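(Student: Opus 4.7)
The lower bound $\alpha(X^n, \B) \geq \alpha_{tr}(X, \B)^n$ is immediate from Corollary~\ref{cor power free}: any maximum $\B$-tr-free set $A_0 \subset X$ gives a $\B$-free Cartesian power $A_0^n \subset X^n$ of the claimed size.

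For the upper bound, the plan is induction on $n$. The base case $n = 1$ reduces to $\alpha(X, \B) = \alpha_{tr}(X, \B)$, which holds because isometries of $\R$ are exactly translations and reflections. For $n \geq 2$, given a $\B$-free set $A \subset X^n$, I slice along the last coordinate: for each $y \in X$ set $A_y := \{\x' \in X^{n-1} : (\x', y) \in A\}$. Each $A_y$ is $\B$-free in $X^{n-1}$, since any copy of $\B$ inside $A_y$ lifts to one in $A$ by appending the coordinate $y$; hence $|A_y| \leq \alpha_{tr}(X, \B)^{n-1}$ by induction. The trivial sum only yields $|A| = \sum_y |A_y| \leq |X| \cdot \alpha_{tr}(X, \B)^{n-1}$, and the task is to sharpen the factor $|X|$ down to $\alpha_{tr}(X, \B)$.

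The sharpening must invoke the cross-slice constraints of Lemma~\ref{lemma baton embed}: for any isometric copy $(y_0, \ldots, y_k)$ of $\B$ inside $X$ and any transversal $\x^r \in A_{y_r}$ satisfying $\|\x^r - \x^{r-1}\|_\infty \leq \lambda_r$ for every $r \in [k]$, the points $(\x^r, y_r)$ form a copy of $\B$ in $A$, contradicting $\B$-freeness. My plan is to exploit this ``no Lipschitz transversal'' property to decompose $A$ into $\alpha_{tr}(X, \B)$ parts, each of which projects to a $\B$-free subset of $X^{n-1}$ (so has size at most $\alpha_{tr}(X, \B)^{n-1}$ by induction). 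Concretely, fix a canonical maximum $\B$-tr-free set $W = \{w_1 < \cdots < w_\alpha\} \subset X$, and for each column $C_{\x'} := \{y : (\x', y) \in A\}$ (which is $\B$-tr-free in $X$, so $|C_{\x'}| \leq \alpha_{tr}(X, \B)$) choose an injection $\phi_{\x'} : C_{\x'} \hookrightarrow W$ in a coordinated way across columns; the part indexed by $w \in W$ is $L_w := \{\x' : w \in \phi_{\x'}(C_{\x'})\} \subset X^{n-1}$, and $|A| = \sum_{w \in W} |L_w|$.

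The principal obstacle will be making the injections $\phi_{\x'}$ coherent enough that each $L_w$ is genuinely $\B$-free in $X^{n-1}$. A naive order-preserving choice can fail: small examples in $X = [3]$ with $\B = \{0,1\}$ show that the plain level sets $D_i := \{\x' : |C_{\x'}| \geq i\}$ need not be $\B$-free in $X^{n-1}$, since a copy of $\B$ inside $D_i$ need not correspond to a Lipschitz transversal of $A$. Getting the decomposition right---or replacing it with a direct compression/shifting argument that canonicalises every column of $A$ into $W$ while provably preserving $\B$-freeness in $X^n$---will be the technical heart of the proof.
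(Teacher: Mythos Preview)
Your lower bound and base case are fine, and you have correctly diagnosed that the naive horizontal slicing $A = \bigsqcup_y A_y$ only gives the factor $|X|$ rather than $\alpha_{tr}(X,\B)$. But the proposal stops exactly where the real content begins: you identify ``getting the decomposition right'' as the technical heart and then leave it open. The compression/coherent-injection programme you sketch is not obviously salvageable---as you yourself note, the level sets $D_i$ need not be $\B$-free, and there is no evident canonical choice of the maps $\phi_{\x'}$ that forces each $L_w$ to be $\B$-free in $X^{n-1}$. So as written there is a genuine gap.

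The missing idea is the partial order $\preceq$ on $\R^n_\infty$ defined in the paper by $\x \preceq \y$ iff $\|\y-\x\|_\infty = y_n - x_n$. This order is tailored so that on a \emph{chain} the $\ell_\infty$-distance is realised in the last coordinate (hence the projection onto the $n$-th coordinate is an isometric injection, and a chain has size at most $\alpha_{tr}(X,\B)$), while for two \emph{incomparable} points the $\ell_\infty$-distance is realised among the first $n-1$ coordinates (hence the projection of an antichain onto $X^{n-1}$ is an isometric injection, and by induction an antichain has size at most $\alpha_{tr}(X,\B)^{n-1}$). Dilworth's theorem then gives $|A| \le \ell(A)\cdot w(A) \le \alpha_{tr}(X,\B)\cdot\alpha_{tr}(X,\B)^{n-1}$ immediately. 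In the language of your proposal, the ``coherent'' decomposition you were searching for is precisely the antichain decomposition of $(A,\preceq)$ into at most $\ell(A)\le \alpha_{tr}(X,\B)$ parts (Mirsky's theorem); each antichain, not each horizontal slice, is the object that projects to a $\B$-free subset of $X^{n-1}$.
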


\begin{proof} We start with the lower bound $\alpha(X^n,\B) \ge \alpha_{tr}(X,\B)^n$. It follows from Corollary~\ref{cor power free} that for any $\B$-tr-free subset $A\subset X$ of cardinality $|A| = \alpha_{tr}(X,\B)$ the product $A^n \subset X^n$ is $\B$-free.
	
	The proof of the upper bound is by induction on $n$. The $n=1$ case is trivial, since any isometric copy of $\B$ on the line is either a translation or a reflection of $\B$. So, we turn to the induction step.
	
	Let $A\subset X^n$ be a $\B$-free subset of cardinality $|A| = \alpha(X^n,\B)$. Consider a poset $(A,\preceq)$ with the partial order defined by \eqref{eq poset}.
	
	First, observe that if $C\subset A$ is a chain with respect to $\preceq$, then $|C|\leq \alpha(X,\B) = \alpha_{tr}(X,\B)$. Indeed, if $C$ is a chain then the $\ell_\infty$-distance between any two of its points depends only on their last coordinates. Since $C$ is $\B$-free, then so is the projection of $C$ on the last coordinate. Moreover, this projection is clearly an injection.
	
	Now let us suppose that $C\subset A$ is an antichain with respect to $\preceq$. Given $\x=(x_1,\,\dots\,,x_n) \in X^n$, we define $\x' = (x_1,\,\dots\,,x_{n-1}) \in X^{n-1}$. It is not hard to see that $\|\x-\y\|_\infty = \|\x'-\y'\|_\infty$ for all incomparable $\x,\y \in A$. Therefore, projecting $C$ on the hyperplane induced by the first $n-1$ coordinates is an injection, and it gives a $\B$-free subset of $X^{n-1}$. Thus, the cardinality of this projection is at most $\alpha(X^{n-1}, \B)$ by definition. Hence,  we conclude by induction that $|C| \le \alpha(X^{n-1}, \B) \le \alpha_{tr}(X, \B)^{n-1}$.
	
	Using Dilworth's theorem, we finally  get that
	\begin{equation*}
		\alpha(X^n, \B) = |A| \le \alpha_{tr}(X, \B)\cdot\alpha_{tr}(X, \B)^{n-1} = \alpha_{tr}(X, \B)^{n}.\qedhere
	\end{equation*}
\end{proof}

\begin{Corollary} \label{cor chi int baton lower}
	Let $k \in \N$ and $\lam=(\lambda_1,\,\dots\,,\lambda_k)$ be a sequence of positive integers. Set $\B=\B(\lam)$. Then for all $n \in \N$, we have
	\begin{equation*}
		\chi(\R_\infty^n,\B) \ge d(\Z, \B)^{-n}.
	\end{equation*}
\end{Corollary}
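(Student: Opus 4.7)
My plan is to combine the pigeonhole bound on the chromatic number with the multiplicative structure of the independence number established in Proposition~\ref{prop indep power}, and then pass to the limit via Proposition~\ref{prop limit}.

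First, since $\B$ is an integer baton, I would invoke Proposition~\ref{prop ZN=RN} to reduce the task to bounding $\chi(\Z_\infty^n, \B)$ from below. For every $m \in \N$, any proper colouring of $\Z_\infty^n$ restricts to a proper colouring of the finite subspace $[m]_\infty^n$ that avoids a monochromatic $\B$, so $\chi(\Z_\infty^n, \B) \ge \chi([m]_\infty^n, \B)$. The pigeonhole inequality~\eqref{eqpighole} gives $\chi([m]_\infty^n, \B) \ge m^n / \alpha([m]^n, \B)$, and applying Proposition~\ref{prop indep power} with $X = [m]$ yields $\alpha([m]^n, \B) = \alpha_{tr}(m, \B)^n$. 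Chaining these inequalities, I obtain
\[
\chi(\R_\infty^n, \B) \;\ge\; \left( \frac{m}{\alpha_{tr}(m, \B)} \right)^n \qquad \text{for every } m \in \N.
\]

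Finally, I would apply Proposition~\ref{prop limit} with $t = 1$ and $S = \B$, which asserts that $\alpha_{tr}(m, \B)/m \to d(\Z, \B)$ as $m \to \infty$. Since $\chi(\R_\infty^n, \B)$ is independent of $m$, passing to the limit (or taking the supremum) on the right-hand side yields $\chi(\R_\infty^n, \B) \ge d(\Z, \B)^{-n}$, as required.

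I do not foresee a significant obstacle here: the statement is essentially a mechanical assembly of the ingredients prepared in Section~\ref{sec2} together with Proposition~\ref{prop indep power}. The only minor point worth noting is that the bound must be established for every finite $m$ before taking the limit; this is what lets us extract the exact constant $d(\Z, \B)^{-n}$ on the nose, rather than a weaker $(d(\Z, \B)^{-1} - o(1))^n$ estimate.
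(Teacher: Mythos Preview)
Your proposal is correct and follows essentially the same route as the paper: apply Proposition~\ref{prop indep power} with $X=[m]$, feed the result into the pigeonhole bound~\eqref{eqpighole}, and then let $m\to\infty$ using Proposition~\ref{prop limit}. The only difference is that you insert an extra reduction via Proposition~\ref{prop ZN=RN} to pass through $\Z_\infty^n$; the paper skips this and applies~\eqref{eqpighole} directly to the finite subspace $[m]^n\subset\R_\infty^n$, so that step is harmless but unnecessary.
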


\begin{proof}
	Let $n \in \N$ be fixed. Given $m \in \N$, we apply Proposition \ref{prop indep power} to the set $X=[m]$ and get that $\alpha([m]^n, \B) = \alpha_{tr}([m], \B)^{n}$. Now it follows from \eqref{eqpighole} that
	\begin{equation*}
		\chi(\R_\infty^n,\B) \ge \frac{\big|[m]^n\big|}{\alpha([m]^n, \B)} = \left(\frac{m}{\alpha_{tr}([m], \B)}\right)^n.
	\end{equation*}
	Since the last inequality holds for all $m \in \N$, Proposition \ref{prop limit} now provides the desired lower bound by letting $m$ tend to infinity.
\end{proof}

We now move on to the upper bound. The short proof of the following proposition is based on the classic Erd\H{o}s--Rogers-type argument (see \cite{ErRog1962}) and can be found, e.g., in \cite{BolJanRior2011}. However, we spell it out below for completeness.

\begin{Proposition} \label{prop translates}
	Let $m, n$ be two positive integers and $X \subset \Z_m^n$ be a nonempty set. Then the additive group $\Z_m^n$ can be covered by
	$ m^n \!\left( \frac{1+\ln |X|}{|X|}\right)$ translates of $X$.
\end{Proposition}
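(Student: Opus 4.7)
The approach is a probabilistic covering argument of Erdős--Rogers type. First I would choose $t$ independent, uniformly random translates $X+y_1,\dots,X+y_t$ of $X$ inside $\Z_m^n$. For any fixed $z\in\Z_m^n$, the probability that $z\in X+y_i$ equals $|X|/m^n$, since this condition is equivalent to $y_i\in z-X$ and $|z-X|=|X|$. By independence, the probability that $z$ is missed by all $t$ translates is at most
\[
\left(1-\frac{|X|}{m^n}\right)^t \le e^{-t|X|/m^n},
\]
so, writing $U$ for the set of uncovered points, linearity of expectation gives $\mathbb{E}[|U|]\le m^n e^{-t|X|/m^n}$.

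Next, I would complete the cover greedily: for each remaining uncovered $z\in U$, add a single translate of $X$ that contains $z$ (e.g.\ $X-x_0+z$ for any fixed $x_0\in X$). The total number of translates used is then at most $t+|U|$, and in expectation this is at most
\[
t + m^n e^{-t|X|/m^n}.
\]
Optimising the right-hand side in $t$ leads to the choice $t\approx m^n\ln|X|/|X|$, which makes $e^{-t|X|/m^n}\le 1/|X|$ and yields an expected total of at most $m^n(1+\ln|X|)/|X|$. Since some realisation of the random process must achieve at most the expectation, a covering with at most this many translates exists, proving the claim.

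The key ingredients are therefore (i) the correct single-point miss probability together with linearity of expectation, (ii) the greedy top-up step that handles the rare uncovered points deterministically, and (iii) the optimisation of the parameter $t$. I do not anticipate a serious obstacle: the only mild technicality is that $t$ must be an integer, which is easily absorbed by the additive ``$+1$'' inside the factor $(1+\ln|X|)$, so no extra work is required beyond the standard optimisation.
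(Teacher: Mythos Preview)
Your proposal is correct and follows essentially the same Erd\H{o}s--Rogers argument as the paper: pick $t=m^n\ln|X|/|X|$ random translates, bound the expected number of uncovered points by $m^n/|X|$, and finish by covering each leftover point with one extra translate. The only cosmetic difference is that the paper first fixes a realisation with $|U|\le m^n/|X|$ and then adds $t$, whereas you take the expectation of $t+|U|$ directly; both yield the same bound.
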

\begin{proof}
	
	Set, with foresight, $r = m^n \cdot \frac{\ln |X|}{|X|}$ and let $\x_1, \,\dots\, ,\x_r$ be elements of $\Z_m^n$ chosen uniformly and independently at random. Let $Y$ be a random set consisting of all elements of $\Z_m^n$ that are not covered by $X+\x_i$ for all $i \in [r]$.
	
	Given $\y \in \Z_m^n$ and $i \in [r]$, it is easy to see that
	\begin{equation*}
		\Pr [\y \in X+\x_i] = \frac{|X|}{m^n}.
	\end{equation*}
	Therefore, it follows from the mutual independence of the choice of $\x_i$'s and the linearity of the expectation that
	\begin{equation*}
		\mathrm{E}[|Y|] = m^n\left(1-\frac{|X|}{m^n}\right)^r \le m^n\left(1-\frac{|X|}{m^n}\right)^{\frac{m^n}{|X|}\ln|X|} < m^n e^{-\ln|X|} = \frac{m^n}{|X|}.
	\end{equation*}
	
	Hence, there is a way to fix $\x_1, \,\dots\, ,\x_r \in \Z_m^n$ such that $|Y| \le \frac{m^n}{|X|}$. Each element of $Y$ can be easily covered by some individual translation of $X$. Thus, the total number of translates of $X$ that is required to cover $\Z_m^n$ is at most
	\begin{equation*}
		r+|Y| \le m^n \cdot \frac{\ln |X|}{|X|} + \frac{m^n}{|X|} \le  m^n\left( \frac{1+\ln |X|}{|X|}\right). \qedhere
	\end{equation*}
\end{proof}

\begin{Corollary} \label{cor chi int baton upper}
	Let $k \in \N$ and $\lam=(\lambda_1,\,\dots\,,\lambda_k)$ be a sequence of positive integers. Set $\B=\B(\lam)$. Then, as $n \rightarrow \infty$, we have
	\begin{equation*}
		\chi(\R_\infty^n,\B) \le \left(d(\Z, \B)^{-1}+o(1)\right)^n.
	\end{equation*}

\end{Corollary}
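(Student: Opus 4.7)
My plan is to first reduce the problem to the integer lattice $\Z_\infty^n$ using Proposition~\ref{prop ZN=RN}, since $\B$ is an integer baton. So it suffices to show $\chi(\Z_\infty^n,\B)\le (d(\Z,\B)^{-1}+o(1))^n$. The overall strategy is to build one dense $\B$-free set in $\Z^n$ that is periodic modulo some well-chosen $m$, and then tile $\Z^n$ with a small number of its translates using the Erd\H os--Rogers-type covering from Proposition~\ref{prop translates}.

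Fix $\varepsilon>0$. By Proposition~\ref{prop limit} applied to $t=1$ and $S=\B$, we may pick $m=m(\varepsilon)\in\N$ and $Y\subseteq\Z_m$ such that $Y+m\Z$ is $\B$-tr-free in $\Z$ and
\[
|Y|=\beta_{tr}(m,\B)\ge (d(\Z,\B)-\varepsilon)\,m.
\]
By Corollary~\ref{cor power free}, the $n$-fold product $(Y+m\Z)^n\subset\Z^n$ is then a $\B$-free subset of $\Z_\infty^n$, and, as it is periodic modulo $m$ in every coordinate, so is any translate $(Y+m\Z)^n+\mathbf{z}$ by a vector $\mathbf{z}\in\Z^n$. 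Moreover, the family of such translates is naturally parametrised by the cosets $\Z^n/m\Z^n=\Z_m^n$: the translate depends only on $\mathbf{z}\bmod m$, and it is determined by the corresponding translate $Y^n+\mathbf{t}$ of $Y^n$ inside $\Z_m^n$.

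Apply Proposition~\ref{prop translates} with $X=Y^n\subseteq\Z_m^n$: one can cover $\Z_m^n$ using at most
\[
N\;=\;m^n\cdot\frac{1+\ln|Y^n|}{|Y^n|}\;=\;\Bigl(\tfrac{m}{|Y|}\Bigr)^{n}(1+n\ln|Y|)
\]
translates of $Y^n$. Lifting these translates back to $\Z^n$ as in the preceding paragraph produces $N$ translates of $(Y+m\Z)^n$ whose union covers $\Z^n$ (because their reductions cover $\Z_m^n$), and each of which is $\B$-free. This colouring of $\Z_\infty^n$ shows $\chi(\Z_\infty^n,\B)\le N$. Using $|Y|\ge(d(\Z,\B)-\varepsilon)m$, we get
\[
N^{1/n}\le \frac{1}{d(\Z,\B)-\varepsilon}\cdot(1+n\ln m)^{1/n}.
\]
For $n\to\infty$ with $m$ fixed the polynomial factor tends to $1$, so $\limsup_{n\to\infty}\chi(\Z_\infty^n,\B)^{1/n}\le(d(\Z,\B)-\varepsilon)^{-1}$. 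Letting $\varepsilon\to 0$ yields the claimed bound.

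The only subtle point is the verification that the lifted translates actually cover $\Z^n$ and remain $\B$-free; this is where Corollary~\ref{cor power free} plays a key role and where we need $\B$ to be integer (so that the periodicity modulo $m$ interacts well with the metric). The computation itself is routine once the covering from Proposition~\ref{prop translates} is in place, and the balancing step between $m$ and $n$ is standard: one picks $m$ first to make $m/|Y|$ as close to $d(\Z,\B)^{-1}$ as desired, and then lets $n$ be large enough that the subexponential factor $(1+n\ln m)^{1/n}$ is negligible.
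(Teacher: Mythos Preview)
Your proof is correct and follows essentially the same route as the paper: pick a periodic $\B$-tr-free set $Y+m\Z$ with density close to $d(\Z,\B)$ via Proposition~\ref{prop limit}, use Corollary~\ref{cor power free} to make $(Y+m\Z)^n$ a $\B$-free subset of $\Z^n$, cover $\Z_m^n$ by few translates of $Y^n$ via Proposition~\ref{prop translates}, lift to $\Z^n$, and pass to $\R^n$ by Proposition~\ref{prop ZN=RN}. The only difference is cosmetic: you spell out the $\varepsilon$--$m$--$n$ limit explicitly, whereas the paper packages it as ``a standard limit argument.''
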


\begin{proof}
	Fix $m \in \N$ and consider a subset $A\subset\Z_m$ of cardinality $|A|=\alpha_{tr}(\Z_m,\B)$ such that $A+m\Z$ is $\B$-tr-free. (Note that the existence of such $A$ trivially follows from the definition of $\alpha_{tr}(\Z_m,\B)$.) Proposition \ref{prop translates} implies that $\Z_m^n$ can be covered by
	\begin{equation*}
		\left(\frac{m}{\alpha_{tr}(\Z_m,\B)}+o(1)\right)^n
	\end{equation*}
	translates of $A^n$ as $n \rightarrow \infty$. It is easy to see that $\Z^n$ can be covered by the same number of translates of $(A+m\Z)^n$. Moreover, it follows from Corollary \ref{cor power free} that $(A+m\Z)^n \subset \Z^n$ is $\B$-free. Assigning an individual colour to each translate, we conclude that
	\begin{equation*} \label{eq cor chi int baton upper}
		\chi(\Z_\infty^n,\B) \le \left(\frac{m}{\alpha_{tr}(\Z_m,\B)}+o(1)\right)^n.
	\end{equation*}
	The statement now follows from  Propositions~\ref{prop limit} and~\ref{prop ZN=RN} via a standard limit argument.
\end{proof}

Corollaries~\ref{cor chi int baton lower} and~\ref{cor chi int baton upper} together give the statement of Theorem \ref{th chi int baton}.

\section{Arbitrary batons}\label{sec4}

Let $\B$ be an arbitrary, not necessarily integer baton. In the present section we prove that the limit $(\chi(\R^n,\B))^{1/n}$ exists and determine its value.
Although this result generalises Theorem~\ref{th chi int baton}, we decided to give them separately because the argument is much cleaner in the integer case. 

We shall need some more notation. Given $k \in \N$ and a vector of positive reals $\lam = (\lambda_1,\,\dots\,,\lambda_k)$, we denote the abelian group that the coordinates generate by $\langle \lam \rangle$, i.e.,
\begin{equation*}
	\langle \lam \rangle = \{a_1\lambda_1+\dots+a_k\lambda_k: a_1,\,\dots\,, a_k \in \Z\}.
\end{equation*}
Since $\langle \lam \rangle$ is trivially a torsion-free group, the fundamental theorem of finitely generated abelian groups states that $\langle \lam \rangle \cong \Z^t$ for some $t \in [k]$.  Let $\varphi: \langle \lam \rangle \rightarrow \Z^t$ be one such isomorphism. Set $\B=\B(\lam)$ and $S = \varphi(\B)\subset \Z^t$ (in the last expression, we think of $\B$ as of the subset of $\langle \lam \rangle$).  Recall that the value $d(\Z^t, S)$ is defined as the supremum of the upper densities over all subsets $A \subset \Z^t$ that contains neither a translation nor a reflection of $S$. The following theorem is the main result of this section.

\begin{Theorem} \label{th chi gen baton}
	In the above notation, we have $\chi(\R^n_\infty,\B) = \big(d(\Z^t,S)^{-1}+o(1))^n$.
\end{Theorem}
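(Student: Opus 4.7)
The plan is to follow the structure of the proof of Theorem~\ref{th chi int baton}, with the lattice $\langle \lam \rangle \cong \Z^t$ playing the role of $\Z$ throughout. For the lower bound, I would adapt Corollary~\ref{cor chi int baton lower} directly: consider the finite set $X_m := \varphi^{-1}(\{1, \ldots, m\}^t) \subset \langle \lam \rangle \subset \R$ of size $m^t$. Proposition~\ref{prop indep power} gives $\alpha(X_m^n, \B) = \alpha_{tr}(X_m, \B)^n$, and because $\varphi$ is a group isomorphism carrying translations and reflections of $\B$ to those of $S$, one has $\alpha_{tr}(X_m, \B) = \alpha_{tr}(m, S)$. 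The pigeonhole bound~\eqref{eqpighole} yields $\chi(\R_\infty^n, \B) \ge (m^t/\alpha_{tr}(m, S))^n$ for every $m$, and letting $m \to \infty$ via Proposition~\ref{prop limit} gives $\chi(\R_\infty^n, \B) \ge d(\Z^t,S)^{-n}$.

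For the upper bound, I would mirror Corollary~\ref{cor chi int baton upper}. Fix $m$, take an $S$-tr-free subset $A \subset \Z_m^t$ of size $\beta := \beta_{tr}(m, S)$, and set $B := \varphi^{-1}(A + m\Z^t) \subset \langle \lam \rangle \subset \R$, which is $\B$-tr-free. Applying Proposition~\ref{prop translates} to the group $\Z_m^{tn} \cong (\Z_m^t)^n$ with the set $A^n$ produces $N \le (m^t/\beta)^n (1 + n \ln \beta)$ translates of $A^n$ covering $\Z_m^{tn}$; lifting these periodically and pulling back via $\varphi^{-n}$ yields a decomposition $\langle \lam \rangle^n = \bigcup_{i=1}^N (B^n + \ttt_i)$ with all $\ttt_i \in \langle \lam \rangle^n$.

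The crucial step is to promote this decomposition to an honest coloring of $\R^n$. Using the Axiom of Choice, fix a section $s : \R/\langle \lam \rangle \to \R$, and for $x \in \R$ set $q_x := x - s([x]) \in \langle \lam \rangle$. Define $c : \R^n \to [N]$ by $c(\x) = \min\{i : (q_{x_1}, \ldots, q_{x_n}) \in B^n + \ttt_i\}$, which is well-defined since the $\ttt_i$ cover $\langle \lam \rangle^n$. To verify $c$ has no monochromatic $\B$-copy, suppose $(\x^0, \ldots, \x^k)$ is one. Lemma~\ref{lemma baton embed} provides a principal coordinate $e$ such that $(x_e^0, \ldots, x_e^k)$ is a one-dimensional translation or reflection of $\B$; in particular all $x_e^r$ lie in a single coset of $\langle \lam \rangle$, so $s([x_e^r])$ is constant in $r$, and $(q_{x_e^r})_r$ itself forms a $\B$-tr-copy inside $\langle \lam \rangle$. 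Monochromaticity then forces $q_{x_e^r} - (\ttt_i)_e \in B$ for every $r$, producing a $\B$-tr-copy inside $B$ and contradicting its $\B$-tr-freeness. Taking $n$-th roots yields $\chi(\R_\infty^n, \B)^{1/n} \le (m^t/\beta)(1 + n\ln\beta)^{1/n}$, which tends to $m^t/\beta$ as $n \to \infty$; letting $m \to \infty$ and invoking Proposition~\ref{prop limit} then gives the matching upper bound.

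The main obstacle is precisely this extension step: when $t \ge 2$ the group $\langle \lam \rangle$ is dense in $\R$, so $\R/\langle \lam \rangle$ admits no topological section and the Axiom of Choice is essentially unavoidable. The verification that the extended coloring still works hinges on Lemma~\ref{lemma baton embed}: every $\B$-copy in $\R^n$ has a principal coordinate whose points all lie in a single $\langle \lam \rangle$-coset, so the possibly wild behaviour of the $q$-coordinates in non-principal directions never enters the contradiction.
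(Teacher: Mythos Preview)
Your proof is correct and follows essentially the same route as the paper: the lower bound is identical (Proposition~\ref{prop indep power} applied to $X=\varphi^{-1}([m]^t)$, together with the observation that $\varphi$ transports tr-copies of $\B$ to those of $S$, which is precisely the paper's Lemma~\ref{lemma B T}), and for the upper bound your section $s:\R/\langle\lam\rangle\to\R$ is exactly the choice of coset representatives $\Omega$ in the paper's Proposition~\ref{cor chi gen baton upper}, with your colour class $\{\x:(q_{x_j})_j\in B^n+\ttt_i\}$ coinciding with the paper's $\big(\bigsqcup_{\omega\in\Omega}(A+\omega)\big)^n+\y_i$. The only cosmetic difference is that you verify $\B$-freeness by invoking Lemma~\ref{lemma baton embed} directly on the principal coordinate, whereas the paper packages the same step through Corollary~\ref{cor power free}.
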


Again, let us make some comments upon the statement of the last theorem to clarify it. First, it is not hard to see that the last result generalises Theorem~\ref{th chi int baton}. Indeed, if all coordinates of $\lam$ are positive integers, then $\langle \lam \rangle \cong \Z$, i.e., $t=1$. Since multiplying all coefficients of $\lam$ by the same number changes neither $\chi(\R^n_\infty,\B)$ nor $d(\Z,\B)$, we can assume without loss of generality  that $\gcd(\lambda_1,\,\dots\,,\lambda_k)=1$. In this case one can take the identity function as the isomorphism $\varphi: \langle \lam \rangle \rightarrow \Z$ providing the equality $d(\Z,\B) = d(\Z^t,S)$.

Second, recall that in Section~\ref{Sec1.1}, we discussed that in case $t=1$, there is a constant $c\sim1.577$ such that for all $S \subset \Z^t$ of cardinality $k+1$, we have
\begin{equation} \label{bound}
	1-\frac{c+\ln k}{k}\le d(\Z^t,S) \le \frac{k}{k+1},
\end{equation}
Schmidt and Tuller~\cite[Corollary~1.5]{SchTul2010} showed that these bounds hold in the general $t \ge 1$ case as well (with the same constant $c$ for all $t$). So, one can get a relatively good estimation of $\chi(\R^n_\infty,\B)$ based only on the cardinality of $\B$. Another similarity between the general and the one-dimensional cases here is the following. For all $t \in \N$, it is known (again, see~\cite{FranklKupSag}) that the upper bound from \eqref{bound} is tight if and only if $S$ tiles $\Z^t$.

Let us also note that, in contrast to the one-dimensional case, for any given $t \ge 2$, it is unknown if there is an algorithm that, given a finite $S \subset \Z^t$, calculates the exact value of $d(\Z^t,S)$. However, in case $t=2$, Bhattacharya \cite{Bhat2020} showed the existence of an algorithm that, given a subset $S \subset \Z^2$ of cardinality $k+1$, verifies if $S$ tiles $\Z^2$, i.e., if $d(\Z^2,S) = \frac{k}{k+1}$. See Szegedy~\cite{Sze1998} or Greenfeld and Tao \cite{GreenTao2020, GreenTao2021, GreenTao2022, GreenTao2023} for partial results and further discussion on this problem in higher dimensions. Nevertheless, observe that it follows from Proposition~\ref{prop limit} that, given $\varepsilon > 0$, the value $d(\Z^t,S)$ can be algorithmically estimated with an error of less than $\varepsilon$. Besides, one can solve this problem ad hoc in some special cases. For instance, let us deduce Theorem~\ref{cor lin indep baton} from Theorem~\ref{th chi gen baton}.

\begin{proof}[Proof of Theorem~\ref{cor lin indep baton}]
Fix $k \in \N$ and a sequence $\lam=(\lambda_1,\,\dots\,,\lambda_k)$ of real numbers linearly independent over $\Z$. In this case, it is clear that $\langle \lam \rangle \cong \Z^k$ and a function $\varphi: \langle \lam \rangle \rightarrow \Z^k$ given by
\begin{equation*}
	\varphi (a_1\lambda_1+\dots+a_k\lambda_k) = (a_1,\,\dots\,,a_k)
\end{equation*}
is an isomorphism. Set $\B=\B(\lam)$, $S = \varphi(\B)\subset \Z^k$, and
\begin{equation*}
	A = \{ \x \in \Z^k : x_1+\dots+x_k \not\equiv 0\mod{k+1} \}.
\end{equation*}
Clearly, the density of $A$ is $\frac{k}{k+1}$. Besides, $A$ contains neither a translation nor a reflection $S'\subset\Z^k$ of $S$, since each such $S'$ covers all residues modulo $k+1$ after summing up the coordinates of its points. Hence, $d(\Z^k,S) \ge \frac{k}{k+1}$. The opposite inequality $d(\Z^k,S) \le  \frac{k}{k+1}$ follows from \eqref{bound}. Now Theorem~\ref{th chi gen baton} concludes the proof.
\end{proof}

\subsection{Proof of Theorem \ref{th chi gen baton}}

\begin{Lemma} \label{lemma B T}
	In the notation of Theorem \ref{th chi gen baton}, a subset $A \subset \langle \lam \rangle$ is $\B$-tr-free if and only if $\varphi(A)\subset\Z^t$ is $S$-tr-free. In particular, for all finite $X \subset \langle \lam \rangle$, we have $\alpha_{tr}(X,\B) = \alpha_{tr}(\varphi(X), S)$.
\end{Lemma}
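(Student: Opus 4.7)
The plan is to leverage the fact that $\varphi$ is a group isomorphism, so it intertwines the operations of translation and negation that define ``tr-freeness'' on both sides.

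First I would dispose of a small subtlety about the ambient universe of translations. The definition of $S$-tr-free from Section~\ref{sec2} allows translation vectors from $\R^t$, but on the $\Z^t$ side this obviously collapses to vectors in $\Z^t$ (otherwise $S+\x \not\subset \Z^t$). An analogous observation holds on the $\langle\lam\rangle$ side: if $A \subset \langle\lam\rangle$ and $\B + x \subset A$ for some $x \in \R$, then since $\sigma_0 = 0 \in \B$, we have $x = \sigma_0 + x \in A \subset \langle\lam\rangle$. The same is true for $-\B + x \subset A$. Thus in the definition of ``$\B$-tr-free'' for subsets of $\langle\lam\rangle$, we may quantify the translation vector over $\langle\lam\rangle$ only.

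Next, since $\varphi: \langle\lam\rangle \to \Z^t$ is a group isomorphism, for every $x \in \langle\lam\rangle$ we have
\begin{equation*}
\varphi(\B + x) = \varphi(\B) + \varphi(x) = S + \varphi(x), \qquad \varphi(-\B + x) = -S + \varphi(x),
\end{equation*}
and conversely every $\y \in \Z^t$ is of the form $\varphi(x)$ for a unique $x \in \langle\lam\rangle$. Using that $\varphi$ is a bijection, ``containment'' transfers in both directions: $\B + x \subset A \iff \varphi(\B+x) \subset \varphi(A) \iff S + \varphi(x) \subset \varphi(A)$, and likewise for reflections. This immediately yields the claimed equivalence: $A$ contains a translation or reflection of $\B$ if and only if $\varphi(A)$ contains a translation or reflection of $S$.

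For the ``in particular'' statement, observe that the restriction of $\varphi$ to any finite $X \subset \langle\lam\rangle$ is a bijection $X \to \varphi(X)$ that maps subsets to subsets preserving cardinalities, and by the equivalence just proved, it sends $\B$-tr-free subsets of $X$ to $S$-tr-free subsets of $\varphi(X)$ and vice versa. Taking the maximum cardinality on both sides gives $\alpha_{tr}(X,\B) = \alpha_{tr}(\varphi(X), S)$. The only genuine point requiring care is the universe remark in the first paragraph; everything else is a direct translation across the isomorphism.
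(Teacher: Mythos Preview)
Your proof is correct and follows essentially the same approach as the paper's: both arguments simply push translations and reflections through the group isomorphism $\varphi$ using its linearity. Your version is more careful in explicitly addressing the ``universe'' subtlety (that the translation vector can be taken in $\langle\lam\rangle$, respectively $\Z^t$, because $0\in\B$), which the paper glosses over by writing ``$x_0\in A$'' without comment.
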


\begin{proof}
	Assume that $A \subset \langle \lam \rangle $ is not $\B$-tr-free, i.e., for some $\sigma \in \{-1,+1\}$ and $x_0 \in A$, we have $\sigma\B+x_0 \subset A$. By linearity of the isomorphism $\varphi$, we have that $$\varphi(\sigma\B+x_0) = \sigma\varphi(\B)+\varphi(x_0) = \sigma S+\varphi(x_0),$$
	and thus $\varphi(A)$ is not $S$-tr-free. The proof of the other direction is analogous.
\end{proof}

\begin{Corollary} \label{cor chi gen baton lower}
	In the notation of Theorem \ref{th chi gen baton}, for all $n \in \N$, we have
	\begin{equation*}
		\chi(\R_\infty^n,\B) \ge d(\Z^t, S)^{-n}.
	\end{equation*}
\end{Corollary}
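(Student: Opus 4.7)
The plan is to imitate the proof of Corollary~\ref{cor chi int baton lower}, replacing the integer grid $[m] \subset \Z$ by a finite subset of $\langle \lam \rangle \subset \R$ that corresponds via the isomorphism $\varphi$ to $[m]^t \subset \Z^t$. The key observation is that the integer-case argument never really used that the baton was integer-valued; it only needed finite "scaled" subsets of the ambient group whose normalized independence numbers converge to $d(\Z,\B)$. Here the analogous convergence is provided by Proposition~\ref{prop limit} applied inside $\Z^t$, and we transport the resulting bounds from $\Z^t$ back to $\langle \lam \rangle$ using Lemma~\ref{lemma B T}.

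Concretely, I would fix $n, m \in \N$ and set $X := \varphi^{-1}([m]^t) \subset \langle \lam \rangle \subset \R$, a finite subset of $\R$ of cardinality $m^t$. Applying Proposition~\ref{prop indep power} with this $X$ and the baton $\B$ gives $\alpha(X^n, \B) = \alpha_{tr}(X, \B)^n$, and then Lemma~\ref{lemma B T} yields $\alpha_{tr}(X, \B) = \alpha_{tr}(\varphi(X), S) = \alpha_{tr}(m, S)$. Since $X^n \subset \R^n_\infty$, any proper $\B$-avoiding colouring of $\R^n_\infty$ restricts to one of $X^n$, and combined with the pigeonhole inequality~\eqref{eqpighole} this gives
$$\chi(\R^n_\infty, \B) \;\ge\; \frac{|X^n|}{\alpha(X^n, \B)} \;=\; \left(\frac{m^t}{\alpha_{tr}(m, S)}\right)^{\!n}.$$
Letting $m \to \infty$, Proposition~\ref{prop limit} guarantees $\alpha_{tr}(m, S)/m^t \to d(\Z^t, S)$, so the right-hand side tends to $d(\Z^t, S)^{-n}$, while the left-hand side does not depend on $m$. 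This yields the desired bound.

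There is no real obstacle to the argument: the heavy lifting has already been done in Proposition~\ref{prop indep power} (via Dilworth's theorem) and in Lemma~\ref{lemma B T} (via linearity of $\varphi$), and everything else is bookkeeping. The only technical point worth flagging is that $X$ must genuinely be a subset of $\R$ in order for Proposition~\ref{prop indep power} to be applicable, but this is automatic because $\langle \lam \rangle$ is concretely realized as an additive subgroup of $\R$ through its generators $\lambda_1, \dots, \lambda_k$.
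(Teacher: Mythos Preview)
Your proof is correct and essentially identical to the paper's own argument: both set $X=\varphi^{-1}([m]^t)\subset\langle\lam\rangle\subset\R$, combine Proposition~\ref{prop indep power} with Lemma~\ref{lemma B T} to get $\alpha(X^n,\B)=\alpha_{tr}(m,S)^n$, apply \eqref{eqpighole}, and then let $m\to\infty$ via Proposition~\ref{prop limit}. The only difference is cosmetic ordering (you invoke Proposition~\ref{prop indep power} before Lemma~\ref{lemma B T}, the paper does it the other way around).
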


\begin{proof}
	Let $n \in \N$ be fixed. Given $m \in \N$, set $X = \varphi^{-1}([m]^t) \subset \langle \lam \rangle$. It follows from Lemma~\ref{lemma B T} that $\alpha_{tr}(X,\B) = \alpha_{tr}([m]^t, S)$. Moreover, Proposition~\ref{prop indep power} implies that $\alpha(X^n,\B)=\alpha_{tr}(X,\B)^n.$ Now one can easily deduce from \eqref{eqpighole} that
	\begin{equation*}
		\chi(\R_\infty^n,\B) \ge \frac{|X^n|}{\alpha(X^n,\B)} = \frac{\big|[m]^t\big|^n}{\alpha_{tr}([m]^t, S)^n} = \left(\frac{m^t}{\alpha_{tr}([m]^t, S)}\right)^n.
	\end{equation*}
	Since the last inequality holds for all $m \in \N$, Proposition \ref{prop limit} now provides the desired lower bound by letting $m$ tend to infinity.
\end{proof}

\begin{Proposition} \label{cor chi gen baton upper}
	In the notation of Theorem \ref{th chi gen baton}, as $n \rightarrow \infty$, we have
	\begin{equation*}
		\chi(\R_\infty^n,\B) \le \left(d(\Z^t, S)^{-1}+o(1)\right)^n.
	\end{equation*}
\end{Proposition}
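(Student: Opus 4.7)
The plan is to adapt the argument of Corollary~\ref{cor chi int baton upper}, using the isomorphism $\varphi$ and replacing the discrete ``$\Z \to \R$'' passage (which relied on Proposition~\ref{prop ZN=RN}) by a measure-theoretic analogue. I would begin by fixing $m \in \N$ and choosing $A \subset \Z_m^t$ of cardinality $\beta_{tr}(m, S)$ whose lift $A + m\Z^t$ is $S$-tr-free in $\Z^t$. Via $\varphi^{-1}$ this yields a $\B$-tr-free set $B = \varphi^{-1}(A+m\Z^t) \subset \langle \lam \rangle$ (by Lemma~\ref{lemma B T}), and $B^n \subset \R^n$ is $\B$-free in $\R_\infty^n$ by Corollary~\ref{cor power free}. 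Applying Proposition~\ref{prop translates} in dimension $tn$ to $A^n \subset \Z_m^{tn}$ and lifting the resulting covering through $\varphi^{\otimes n}$ provides a covering of $\langle \lam \rangle^n$ by $\bigl(m^t/\beta_{tr}(m,S)\bigr)^n(1 + tn\ln \beta_{tr}(m, S))$ translates of $B^n$, each of which is $\B$-free in $\R_\infty^n$.

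The main obstacle, absent in the integer case, is upgrading this colouring of $\langle \lam \rangle^n$ to one of all of $\R^n$. I would overcome this by constructing, for any $\varepsilon > 0$, a Lebesgue-measurable $\B$-tr-free set $\tilde B \subset \R$ whose density is at least $\beta_{tr}(m, S)/m^t - \varepsilon$, and then running the covering argument directly on $\R^n$ with $\tilde B^n$ in place of $B^n$. Indeed, $\tilde B^n$ is $\B$-free in $\R_\infty^n$ by Corollary~\ref{cor power free}, and a continuous analogue of Proposition~\ref{prop translates} (proved by the same probabilistic argument, with uniform random shifts taken from a fundamental domain of the period lattice of $\tilde B^n$) covers $\R^n$ by $\bigl(\alpha^{-1}+o(1)\bigr)^n$ translates, where $\alpha = \beta_{tr}(m, S)/m^t - \varepsilon$. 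Assigning one colour per translate yields a proper colouring of $\R_\infty^n$, and letting $m \to \infty$ and $\varepsilon \to 0$ while invoking Proposition~\ref{prop limit} concludes the argument.

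The hardest step will be the construction of the measurable set $\tilde B$. For integer $\B$ the naive thickening $\tilde B = B + [0, 1)$ with $B \subset \Z$ suffices, essentially by the rounding lemma behind Proposition~\ref{prop ZN=RN}. For non-integer $\B$ with $\langle \lam \rangle$ dense in $\R$, this thickening fails immediately: $B$ is dense in $\R$, so $B + [0, \delta)$ would cover arbitrarily long intervals and hence contain copies of $\B$ for any $\delta > 0$. I expect one must build $\tilde B$ more carefully, for instance as a periodic union of short intervals whose endpoints lie in a suitable discrete coset transversal of $\langle \lam \rangle$ in $\R$, arranged so that the first point of any putative copy of $\B$ inside $\tilde B$ can be rounded back to a point of $B$ while still giving an honest $\B$-copy inside $B$, which is forbidden by Lemma~\ref{lemma B T}.
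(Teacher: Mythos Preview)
Your first paragraph is correct and essentially matches the paper: you fix $m$, take $A\subset\Z_m^t$ with $|A|=\beta_{tr}(m,S)$, lift to an $S$-tr-free set $A+m\Z^t$, pull back through $\varphi^{-1}$ to get a $\B$-tr-free $B\subset\langle\lam\rangle$, and use Proposition~\ref{prop translates} in $\Z_m^{tn}$ to cover $\langle\lam\rangle^n$ by $r=(m^t/\beta_{tr}(m,S)+o(1))^n$ translates of the $\B$-free set $B^n$. So far so good.

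The gap is in the upgrade from $\langle\lam\rangle^n$ to $\R^n$. Your proposed route---constructing a \emph{measurable} $\B$-tr-free set $\tilde B\subset\R$ of nearly the right density and then rerunning a continuous Erd\H os--Rogers covering---is not completed: you correctly diagnose that naive thickening fails when $\langle\lam\rangle$ is dense in $\R$, but the replacement you sketch (``periodic union of short intervals whose endpoints lie in a suitable discrete coset transversal'') is not a construction, and it is not clear it can be made to work while preserving both measurability and $\B$-tr-freeness.

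The paper avoids this entirely with a one-line algebraic observation you are circling around but do not quite land on: since every pairwise distance in $\B$ lies in $\langle\lam\rangle$, any translate or reflection of $\B$ in $\R$ is contained in a single coset $\langle\lam\rangle+\omega$. Hence if $\Omega$ is \emph{any} set of coset representatives for $\R/\langle\lam\rangle$, the set $\tilde B\coloneqq\bigsqcup_{\omega\in\Omega}(B+\omega)$ is automatically $\B$-tr-free, with no density or measurability considerations whatsoever. Then $\tilde B^n$ is $\B$-free by Corollary~\ref{cor power free}, and the \emph{same} $r$ translation vectors $\y_i\in\langle\lam\rangle^n$ already found now satisfy $\R^n=\bigcup_{i=1}^r(\tilde B^n+\y_i)$, because $\R^n=\bigsqcup_{\bm\omega\in\Omega^n}(\langle\lam\rangle^n+\bm\omega)$ and each coset is covered. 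No second covering argument is needed, and the bound $\chi(\R_\infty^n,\B)\le r$ follows directly. Drop the measure-theoretic detour and use this coset trick instead.
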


\begin{proof}
	Fix $m \in \N$ and consider a subset $X\subset\Z_m^t$ of cardinality $|X|=\alpha_{tr}(\Z_m^t,S)$ such that $X+m\Z^t$ is $S$-tr-free (recall that the existence of such $X$ trivially follows from the definition of $\alpha_{tr}(\Z_m^t,S)$). Proposition \ref{prop translates} implies that $\Z_m^{tn}$ can be covered by $r=r(n)$ translates of $X^n$, where
	\begin{equation} \label{eq cor chi gen baton upper 1}
		r=\left(\frac{m^t}{\alpha_{tr}(\Z_m^t,S)}+o(1)\right)^n
	\end{equation}
	as $n \rightarrow \infty$. It is easy to see that $\Z^{tn}$ can be covered by the same number of translates of $(X+m\Z^{t})^n$, i.e., there are $\x_1,\,\dots\,,\x_r \in \Z^{tn}$ such that
	\begin{equation} \label{eq cor chi gen baton upper 2}
		\Z^{tn} = \bigcup_{i=1}^{r}\Big((X+m\Z^{t})^n+\x_i\Big).
	\end{equation}
	
	Set $A = \varphi^{-1}(X+m\Z^t) \subset \langle \lam \rangle$. Given $n \in \N$, we extend the isomorphism $\varphi: \langle \lam \rangle \rightarrow \Z^t$ component-wise to the isomorphism $\varphi: \langle \lam \rangle^n \rightarrow \Z^{tn}$ between two abelian groups with naturally defined component-wise addition. For all $i \in [r]$ we also set $\y_i = \varphi^{-1}(\x_i) \in \langle \lam \rangle^n$. Now it follows from \eqref{eq cor chi gen baton upper 2} that
	\begin{align} \label{eq cor chi gen baton upper 3}
		\langle \lam \rangle^n = \varphi^{-1}(\Z^{tn}) =&\ \bigcup_{i=1}^{r} \varphi^{-1}\Big((X+m\Z^{t})^n+\x_i\Big) = \nonumber \\
		=&\ \bigcup_{i=1}^{r} \Big( \big(\varphi^{-1}(X+m\Z^t)\big)^n +\varphi^{-1}(\x_i) \Big) = \bigcup_{i=1}^{r} (A^n+\y_i).
	\end{align}
	
	Recall that the set $X+m\Z^t$ is $S$-tr-free by assumption. Hence, it follows from Lemma~\ref{lemma B T} that $A = \varphi^{-1}(X+m\Z^t)$ is $\B$-tr-free. Hence, Corollary~\ref{cor power free} implies that $A^n$ is $\B$-free. Now by assigning an individual colour to each translation $A^n+\y_i$, $i \in [r]$, we conclude from \eqref{eq cor chi gen baton upper 3} that $$\chi(\langle \lam \rangle^n, \B) \le r,$$ where $r=r(n)$ is from \eqref{eq cor chi gen baton upper 1}.
	
	Now we want to show that \eqref{eq cor chi gen baton upper 3} implies a stronger inequality $\chi(\R_\infty^n, \B) \le r$. A similar step in the proof of Corollary \ref{cor chi int baton upper} was immediate by using Proposition~\ref{prop ZN=RN}. Unfortunately, we do not have an analogue of this proposition that would guarantee that the values  $\chi(\R_\infty^n, \B)$ and $\chi(\langle \lam \rangle^n, \B)$ are actually equal in our case. Nevertheless, we can bypass this difficulty.

Consider $\langle \lam \rangle$ as a subgroup of $\R$. Let $\Omega \subset \R$ be such that the set \mbox{$\{\langle \lam \rangle+\omega: \omega \in \Omega\}$} is the set of all cosets of $\langle \lam \rangle$ in $\R$, i.e.,
	\begin{equation} \label{eq cor chi gen baton upper 4}
		\R = \bigsqcup_{\omega \in \Omega} (\langle \lam \rangle+\omega),
	\end{equation}
	where $\sqcup$ stands for a disjoint union. 
It simply follows from \eqref{eq cor chi gen baton upper 4} that
	\begin{equation} \label{eq cor chi gen baton upper 5}
		\R^n = \bigsqcup_{\bm{\omega} \in \Omega^n} (\langle \lam \rangle^n+\bm{\omega}).
	\end{equation}
Now we combine \eqref{eq cor chi gen baton upper 3} with \eqref{eq cor chi gen baton upper 5} to conclude that
	\begin{equation*}
		\R^n = \bigsqcup_{\bm{\omega} \in \Omega^n} \left( \bigcup_{i=1}^{r} (A^n+\y_i)+\bm{\omega}\right) = \bigcup_{i=1}^{r} \left( \bigsqcup_{\bm{\omega} \in \Omega^n} (A^n+\bm{\omega})+\y_i\right).
	\end{equation*}

	Thus, to prove the inequality $\chi(\R_\infty^n, \B) \le r$ it remains to show that the set
	$$\bigsqcup_{\bm{\omega} \in \Omega^n} (A^n+\bm{\omega}) = \left(\bigsqcup_{\omega \in \Omega} (A+\omega)\right)^n \subset \R^n$$
	is $\B$-free. By Corollary~\ref{cor power free}, it is sufficient to show that $\bigsqcup_{\omega \in \Omega} (A+\omega) \subset \R$ is $\B$-tr-free. Each term $A+\omega$ here is clearly $\B$-tr-free as it is a translation of the set $A$ which is $\B$-tr-free by construction. Moreover, since the distance between any two points of $\B$ belongs to $\langle \lam \rangle$, we conclude that each translation or reflection of $\B$ in $\R$ lies entirely within some coset $\langle \lam \rangle+\omega$, $\omega \in \Omega$. Thus, the union  $\bigsqcup_{\omega \in \Omega} (A+\omega)$ is indeed $\B$-tr-free.
	
 Finally, we get that
	\begin{equation*} \label{eq cor chi gen baton upper 6}
		\chi(\R_\infty^n,\B) \le r= \left(\frac{m^t}{\alpha_{tr}(\Z_m^t,S)} +o(1)\right)^n
	\end{equation*}
	as $n \rightarrow \infty$. The rest of the proof is a straightforward limiting argument that uses Proposition~\ref{prop limit}, and we omit it. 
\end{proof}

Corollary~\ref{cor chi gen baton lower} and Proposition~\ref{cor chi gen baton upper} combined together give the statement of Theorem~\ref{th chi gen baton}.

\section{Cartesian products}\label{sec5}

The main result of the first part of this section is the following theorem, that given  $k,m,n \in \N$, determines the maximum cardinality of a $\B_k^m$-free subset of $\B_k^n$ {\it precisely}. Note that an isometric copy of $\B_k^m$ inside $\B_k^n$ can be naturally considered as an `$m$-dimensional subspace'. So, one may think of our next result as of a max-norm analog of the multidimensional density Hales--Jewett theorem \cite{FurKat, HalJew, Pol1} which turned out to be incredibly simpler than the original statement.

\begin{Theorem} \label{Bkm3}
	For all $k \ge 2$ and for all $m,n \in \N$, one has
	\begin{equation*}
	\alpha(\B_k^n, \B_k^m) = \sum_{i=0}^{m-1} \binom{n}{i}k^{n-i}.
	\end{equation*}
\end{Theorem}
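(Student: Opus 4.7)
The plan is to prove the matching lower and upper bounds for $\alpha(\B_k^n, \B_k^m)$ separately, with a structural lemma about isometric copies of $\B_k^m$ in $\B_k^n$ serving as the main technical input. First, I would establish the following generalisation of Lemma~\ref{lemma baton embed}: any isometric copy $C$ of $\B_k^m$ in $\B_k^n = \{0,1,\ldots,k\}^n$ admits $m$ distinct ``driving'' coordinate indices $i_1 < \cdots < i_m \in [n]$ such that the projection of $C$ onto coordinate $i_j$ equals $\{0, 1, \ldots, k\}$ for each $j$. I would prove this by induction on $m$: the base case $m = 1$ is exactly Lemma~\ref{lemma baton embed}, and for the inductive step I would set up the bipartite graph whose edges connect the $m$ generating batons of $\B_k^m$ to their possible driving coordinates in $[n]$ and apply Hall's theorem, using Corollary~\ref{cor box embed} (which forbids isometric embeddings of $m$-dimensional hyperrectangles into $\R^{m-1}_\infty$) to verify Hall's condition.

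From this lemma it follows that, after possibly relabelling each axis of $\B_k^m$ by the reflection $y_j \mapsto k - y_j$, there is a point $\z \in C$ with $z_{i_1} = \cdots = z_{i_m} = 0$, hence with at least $m$ zero coordinates. This motivates the construction
\[A = \{\x \in \B_k^n : |\{i \in [n] : x_i = 0\}| \le m - 1\},\]
which is $\B_k^m$-free by the structural lemma. A direct count (choose the $i$ zero positions in $\binom{n}{i}$ ways and fill the remaining coordinates with values from $\{1, \ldots, k\}$) yields $|A| = \sum_{i=0}^{m-1}\binom{n}{i}k^{n-i}$, giving the desired lower bound on $\alpha(\B_k^n, \B_k^m)$.

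For the matching upper bound I would induct on $n$ using the identity $f(n,m) = k f(n-1,m) + f(n-1,m-1)$ for $f(n,m) = \sum_{i=0}^{m-1} \binom{n}{i} k^{n-i}$, which is a direct consequence of Pascal's rule. Given a $\B_k^m$-free set $A \subset \B_k^n$, slice along the last coordinate to write $A = \bigsqcup_{c=0}^{k}(S_c \times \{c\})$, where each $S_c \subset \B_k^{n-1}$ is itself $\B_k^m$-free (any $\B_k^m$ copy in $S_c$ would lift to one in $A$) and hence satisfies $|S_c| \le f(n-1,m)$ by induction. To reach the sharp bound $|A| \le k f(n-1,m) + f(n-1,m-1)$ it remains to show that at least one slice $S_c$ must be $\B_k^{m-1}$-free.

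The main obstacle will be proving precisely this last claim, since the $\B_k^{m-1}$ copies $T_c$ that (by assumption) appear in different slices need not be compatibly aligned. My plan is to apply the structural lemma to each $T_c$ to extract its $(m-1)$-tuple of driving coordinates and then use pigeonhole, possibly refined by classifying each $T_c$ up to its precise position in $\B_k^{n-1}$, to produce enough aligned $T_c$'s so that a $1$-Lipschitz choice of base points in each slice assembles into an honest $\B_k^m$ copy inside $A$, contradicting $\B_k^m$-freeness. If this slicing-plus-pigeonhole approach proves insufficient, I would fall back on a shifting/compression reduction that gradually transforms $A$ into the canonical construction above while preserving both $|A|$ and $\B_k^m$-freeness, and verify that the compressed set has size at most $f(n,m)$ directly.
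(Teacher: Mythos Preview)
Your lower-bound construction has a genuine gap. The claim that ``after relabelling each axis of $\B_k^m$ by $y_j \mapsto k-y_j$, there is a point $\z \in C$ with $z_{i_1}=\cdots=z_{i_m}=0$'' does not follow from your structural lemma: knowing that each driving projection is surjective onto $\{0,\ldots,k\}$ does not give a \emph{single} point realising $0$ in all $m$ driving coordinates simultaneously. Indeed, the paper remarks explicitly (after its Lemma~\ref{k=2t}) that the statement ``some $g(\z)$ has $m$ coordinates equal to $t$'' is \emph{not valid} for $t=0$. The paper's key idea is to take $t=\lfloor k/2\rfloor$: with this middle value one can build $\z\in\B_k^m$ coordinate-wise (setting $z_i\in\{t,k-t\}$ according to orientation) and the inequalities $\|\z-\x^i\|_\infty=t$, $\|\z-\y^i\|_\infty=k-t$ go through precisely because $|k-2t|\le 1\le t,k-t$; for $t=0$ this breaks down completely. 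Your Hall/Corollary~\ref{cor box embed} route to the structural lemma is also shaky: different translates of the same generating baton $B_i$ can have different driving coordinates, so it is unclear why projecting onto $N(S)$ should be an isometry on the $|S|$-dimensional face, which is what you would need to invoke Corollary~\ref{cor box embed}.

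Your primary upper-bound plan also fails. The assertion ``at least one slice $S_c$ must be $\B_k^{m-1}$-free'' is false: for $k=m=2$, $n=3$, take
\[
A=\{(x,0,0),(x,2,1),(x,0,2):x\in\{0,1,2\}\}\subset\B_2^3.
\]
Each slice $S_c$ is a copy of $\B_2$, yet $A$ is $\B_2^2$-free (e.g.\ $(0,0,0)$ is at distance $2$ from seven of the other eight points, whereas no vertex of $\B_2^2$ has more than five neighbours at distance $2$). So the pigeonhole-and-alignment idea cannot produce a contradiction in general. Your fallback---a shifting/compression argument---is exactly what the paper does: it introduces a head-moving function on $A\setminus\{(\y,0):H(\y)=[k]_0\}$, which is the correct substitute for the false slice claim.
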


Observe that the case $k=1$, which is not covered by the last result, is rather degenerate. Indeed, given $m,n \in \N$, both $\B_1^n$ and $\B_1^m$ are regular unit simplices with $2^n$ and $2^m$ vertices respectively, and thus it is easy to see that $\alpha(\B_1^n, \B_1^m) = \min\{2^n, 2^m-1\}$.

We can use this theorem as follows to estimate the value $\chi(\R_\infty^n, \B_k^m)$ and prove Theorem~\ref{Chi Bkm}.

\begin{proof}[Proof of Theorem~\ref{Chi Bkm}]
Let us consider a Cartesian power $\B_k^n$ as a subset $[k]_0^n$ of $\R_\infty^n$ with induced metric. Now the lower bound is straightforward from Theorem~\ref{Bkm3} and \eqref{eqpighole}. Indeed, for all fixed $k,m \in \N$, we get that 
$$\chi(\R_\infty^n, \B_k^m) \ge \frac{|\B_k^n|}{\alpha(\B_k^n, \B_k^m)} = \frac{(k+1)^n}{(1+o(1)) \binom{n}{m-1}k^{n-m+1}}= (1+o(1))\frac{(m-1)!k^{m-1}}{n^{m-1}} \Big(\frac{k+1}{k}\Big)^n$$
as $n \to \infty$, since the sum $\sum_{i=0}^{m-1} \binom{n}{i}k^{n-i}$ is equal to its last term asymptotically.

The upper bound follows from the inequality $\chi(\R_\infty^n, \B_k^m) \le \chi(\R_\infty^n, \B_k)$ and the upper bound on $\chi(\R_\infty^n, \B_k)$ that was proved in \cite[Theorem~9]{KupSag}.
\end{proof}

The rest of this section is organised as follows. First, we prove Theorem~\ref{Bkm3}. Then we generalise the arguments from this proof and deduce Theorem~\ref{th chi grid}.

\subsection{Proof of Theorem~\ref{Bkm3}}

During this proof let us consider a Cartesian power $\B_k^n$ as a subset $[k]_0^n$ of $\R_\infty^n$ with the induced metric for convenience. We begin with the upper bound.

\begin{Proposition} \label{Bkm1}
	For all $k,m,n \in \N$, one has
	\begin{equation} \label{Bkm1 1}
		\alpha([k]_0^n, \B_k^m) \le \sum_{i=0}^{m-1} \binom{n}{i}k^{n-i}.
	\end{equation}
\end{Proposition}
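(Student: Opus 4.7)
The proof proceeds by induction on $n+m$ with $k$ fixed, setting $f(n,m):=\sum_{i=0}^{m-1}\binom{n}{i}k^{n-i}$. For the base cases: when $m=1$, every $(k+1)$-point baton in $[k]_0^n$ must coincide with a copy of $\B_k$ (its $k$ positive integer gaps summing to at most $k$ force each gap to equal $1$), so Proposition~\ref{prop no batons} gives $|A|\le k^n=f(n,1)$; when $n=1$ and $m\ge 2$, Corollary~\ref{cor box embed} applied to the $2^m$ corners of $\B_k^m$ rules out any isometric embedding of $\B_k^m$ into $\R$, whence $|A|\le k+1=f(1,m)$ trivially.

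For the inductive step with $n,m\ge 2$, I slice $A$ by the last coordinate: $A_v:=\{\x'\in[k]_0^{n-1}:(\x',v)\in A\}$ for $v\in[k]_0$. Each $A_v$ is $\B_k^m$-free, so $|A_v|\le f(n-1,m)$ by induction. The common slice $B:=\bigcap_{v\in[k]_0} A_v$ must be $\B_k^{m-1}$-free: if $C\subseteq B$ were a $\B_k^{m-1}$-copy then $C\times[k]_0\subseteq A$ would be isometric to $\B_k^{m-1}\times \B_k\cong \B_k^m$ via the product nature of the max-metric, contradicting $A$'s $\B_k^m$-freeness. Hence $|B|\le f(n-1,m-1)$ by induction.

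A Pascal-type identity yields $f(n,m)=kf(n-1,m)+f(n-1,m-1)$, so the bound follows as soon as some slice $A_{v^*}$ is shown to be $\B_k^{m-1}$-free (the induction hypothesis then gives $|A_{v^*}|\le f(n-1,m-1)$, while the remaining $k$ slices each contribute at most $f(n-1,m)$). This is the principal obstacle: examples show that along a fixed coordinate direction every slice can contain a $\B_k^{m-1}$-copy without forcing a $\B_k^m$-copy in $A$, the slice copies drifting across layers so as not to combine into a product. The remedy is to allow the slicing direction to range over all $n$ coordinate axes, and by relabeling axes one may assume the good direction is the $n$-th. To produce such a direction, I invoke Lemma~\ref{lemma baton embed} to track the ``main axis'' of each $\B_k^{m-1}$-copy appearing in any of the $n(k+1)$ axis-parallel slices of $A$; if none of these slices were $\B_k^{m-1}$-free, a careful alignment of these main axes would glue the copies into an isometric (possibly non-axis-parallel) copy of $\B_k^m$ inside $A$, contradicting the hypothesis. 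This combinatorial gluing—identifying a consistent main-axis alignment across the slice copies—is the delicate heart of the argument.
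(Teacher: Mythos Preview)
Your argument has a genuine gap at exactly the point you flag as ``the delicate heart'': the assertion that if every axis-aligned slice contains a $\B_k^{m-1}$-copy, then these copies can be glued into a $\B_k^m$-copy in $A$. You give no mechanism for this gluing. The copies in different slices can have different main axes, different positions, and there is no evident pigeonhole or alignment principle forcing any two of them to combine. Invoking Lemma~\ref{lemma baton embed} only tells you each copy has \emph{some} main axis; it does not coordinate the axes across slices. As stated, the argument is a hope rather than a proof, and I do not see how to complete it along these lines.

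Ironically, you already hold the right object. Your set $B=\bigcap_{v\in[k]_0}A_v$ is precisely the set of ``full columns'' (tails whose every head lies in $A$), and your observation that $B$ is $\B_k^{m-1}$-free with $|B|\le f(n-1,m-1)$ is correct and is exactly what the paper uses. But you then abandon $B$ and go hunting for a single $\B_k^{m-1}$-free slice, which is a much stronger and unjustified demand. The paper instead removes one point from each full column to form $A'=A\setminus(B\times\{0\})$, and then applies a compression (a ``head-moving'' injection that shifts the last coordinate up by $1$ whenever some higher head is missing). This injection pushes $A'$ into the $k$ layers with last coordinate in $[k]$, and the key verification is that each resulting layer is still $\B_k^m$-free because the compression preserves all pairwise $\ell_\infty$-distances within a layer. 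This yields $|A'|\le k\,f(n-1,m)$, and together with $|B|\le f(n-1,m-1)$ and the Pascal identity you wrote down, the bound follows. The missing idea is this compression step, not a slice-existence lemma.
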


\begin{proof}
Let $k$ be a fixed integer. The proof is by induction on $m$ and $n$. More precisely, we use the assumption for $(m-1,n-1)$ and $(m,n-1)$ to prove it for $(m,n)$. Since the $0$-ary Cartesian power of a given set is a singleton by definition, it is clear that $\alpha([k]_0^0, \B_k^m)=1$ and $\alpha([k]_0^n, \B_k^0)=0$ for all $m,n \in \N$. So, inequality \eqref{Bkm1 1} is vacuous for $m=0$ or $n=0$. We use these trivial cases as the base of the induction.

Next we turn to the induction step. Let $A \subset [k]_0^n$ be a subset of cardinality $|A| = \alpha([k]_0^n, \B_k^m)$ with no copy of $\B_k^m$. For a vector $\x=\left(x_1,\, \dots\,, x_n\right)\in [k]_0^n$ define its {\it head} $h( \x )=x_n$ and  {\it tail} $t(\x)=(x_1,\, \dots\,, x_{n-1})$. Given $\y \in [k]_0^{n-1}$, set $H(\y) = \{i \in [k]_0: (\y, i) \in A\}$.
	
Let $Y = \{\y \in [k]_0^{n-1}: H(\y) = [k]_0\}$. One can see that $|Y| \le \alpha([k]_0^{n-1}, \B_k^{m-1})$. Indeed, if $|Y| > \alpha([k]_0^{n-1}, \B_k^{m-1})$, then by definition there is a subset $S \subset Y$ that is a copy of $\B_k^{m-1}$. Thus, the subset $\{(\y, i): \y \in S, i \in [k]_0\} \subset A$ is a copy of $\B_k^{m}$, a contradiction. Hence, by induction we have that
	\begin{equation} \label{Bkm1 2}
	|Y| \le \sum_{i=0}^{m-2} \binom{n-1}{i}k^{n-i-1}.
	\end{equation}
(In the case $m=1$ we simply have $|Y|=0$.)
Put $Y'\coloneqq \{(\y,0): \y\in Y\} \subset A$ and  $A' \coloneqq A\!\setminus\! Y'.$ Note that $|A| =|Y'|+|A'| = |Y|+|A'|$.

The next step is to bound $|A'|$. Let us define the head-moving function $f:A'\rightarrow[k]_0^n$ that increases the last coordinate of a vector by $1$ `whenever possible' as follows. (We note that it has some similarity to the shifting or compression technique in extremal set theory, see, e.g., \cite{Fra1}.) For a vector $\x\in A'$, set
\begin{equation*}
f(\x) =
\begin{cases}
\left(t(\x),h(\x)+1\right) & \mbox{if } \exists\, j \in [k]_0\!\setminus\! H(t(\x)) \mbox{ such that } j > h(\x);\\
\x & \mbox{otherwise.}
\end{cases}
\end{equation*}

We show that $f$ is an injection. Indeed, it is clear that $f(\mathbf{x}^1)\neq f(\mathbf{x}^2)$ for all $\mathbf{x}^1, \mathbf{x}^2$ such that $t(\mathbf{x}^1) \neq t(\mathbf{x}^2)$ or $|h(\mathbf{x}^1)-h(\mathbf{x}^2)| \ge 2$. Thus, let us consider $\mathbf{y} \in [k]_0^{n-1}$, $i \in [k]$, such that both $(\mathbf{y},i-1)$ and $(\mathbf{y},i)$ belong to $A'$. It is not hard to see that either $f(\mathbf{y},i-1) = (\mathbf{y},i)$ and $f(\mathbf{y},i) = (\mathbf{y},i+1),$ or $f(\mathbf{y},i-1) = (\mathbf{y},i-1)$ and $f(\mathbf{y},i) = (\mathbf{y},i)$. In both cases we have $f(\mathbf{y},i-1)\neq f(\mathbf{y},i)$. Thus, $f$ is really an injection.

Recall that $H(\y)\neq [k]_0$ by construction for all $\y \in [k]_0^{n-1}$ such that $(\y,0)\in A'$. Thus, it is not hard to check that $h(f(\x)) \neq 0$ for all $\x \in A'$. Hence, putting
\begin{equation*}
	A_i' = \{f(\x): \x \in A' \mbox{ and } h(f(\x)) =i \},
\end{equation*}
we have
\begin{equation} \label{Bkm1 4}
	 |A'| = |A_1'|+\dots+|A_k'|.
\end{equation}

Note that the distance between any two points in $A_i'$ is the same as the distance between their preimages. Indeed, if two points fall into the same $A_i'$ then their difference in the last coordinate was at most $1$ before, and could not be the unique maximal difference. Since $A'$ is $\B_k^m$-free by construction, we conclude that each $A_i'$ is $\B_k^m$-free as well. Moreover, one can naturally consider $A_i'$ as a subset of $[k]_0^{n-1}$, because all the vectors from $A_i'$ have the same last coordinate (equal to~$i$). Thus, we can apply induction to each $A_i'$ to conclude that 
	\begin{equation*} \label{Bkm1 5}
		|A_i'| \le \alpha([k]_0^{n-1}, \B_k^m) \le \sum_{i=0}^{m-1} \binom{n-1}{i}k^{n-i-1}.
	\end{equation*}
Combining this with \eqref{Bkm1 2} and 	\eqref{Bkm1 4}, we get
	\begin{align*}
	\alpha([k]_0^n, \B_k^m) = |A| = |Y|+|A'|\le& \left(\sum_{i=0}^{m-2} \binom{n-1}{i}k^{n-i-1}\right) + k\left(\sum_{i=0}^{m-1} \binom{n-1}{i}k^{n-i-1}\right)\\
=& \left(\sum_{i=1}^{m-1} \binom{n-1}{i-1}k^{n-i}\right) + \left(\sum_{i=0}^{m-1} \binom{n-1}{i}k^{n-i}\right)\\
=& \sum_{i=0}^{m-1} \binom{n}{i}k^{n-i}. \qedhere
	\end{align*}
\end{proof}

In what follows, we prove the lower bound in Theorem~\ref{Bkm3} that, unlike the upper bound from Proposition~\ref{Bkm1}, holds only for $k \ge 2$. In general, it is difficult to grasp how the possible isometric copies of $\B^m_k$ might look like in $[k]_0^n$, but the following lemma establishes one necessary condition.

\begin{Lemma} \label{k=2t}
Let $k \ge 2$ and $m,n \in \N$ be such that $m \le n$. Let $g:[k]_0^m \rightarrow [k]_0^n$ be an isometry and put $t = \lfloor k/2\rfloor$. Then there is $\z \in [k]_0^m$ such that $g(\z) \in [k]_0^n$ has at least $m$ coordinates equal to $t$.
\end{Lemma}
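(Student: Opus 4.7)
The plan is to take $\z=(t,t,\ldots,t)\in[k]_0^m$ (the center of the cube) and apply Lemma~\ref{lemma baton embed} to the $m$ axis-aligned batons of length $k$ passing through $\z$. For each $i\in[m]$, let $B_i=\{\z+(s-t)e_i:s\in[0,k]\}$, an isometric copy of $\B_k$ with $\z$ at step $t$ from the endpoint whose $i$-th coordinate is $0$. Lemma~\ref{lemma baton embed} yields a nonempty witness set $J_i\subseteq[n]$ of coordinates $j$ such that $g(B_i)$ is a monotone step-$(\pm 1)$ progression on coordinate $j$; confining this progression to $[0,k]$ pins $g(\z)_j=t$ when $k$ is even and $g(\z)_j\in\{t,t+1\}$ when $k$ is odd.

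The main step is to prove the disjointness $J_i\cap J_{i'}=\emptyset$ for $i\ne i'$. Suppose $j^*\in J_i$; then, after possibly reversing orientation, $g(\z-te_i)_{j^*}=0$ and $g(\z+(k-t)e_i)_{j^*}=k$. For each endpoint $p$ of $B_{i'}$, namely $p=\z-te_{i'}$ or $p=\z+(k-t)e_{i'}$, a direct computation shows $\|p-(\z-te_i)\|_\infty\le\max(t,k-t)$ and $\|p-(\z+(k-t)e_i)\|_\infty\le\max(t,k-t)$. Applying the triangle inequality on coordinate $j^*$ forces $g(p)_{j^*}\in[\min(t,k-t),\max(t,k-t)]$, which equals $\{t\}$ for even $k$ and lies in $\{t,t+1\}$ for odd $k$. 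Since $k\ge 2$ guarantees $t,t+1\notin\{0,k\}$, the image $g(B_{i'})$ fails to attain both $0$ and $k$ on coordinate $j^*$, so $j^*$ cannot witness $B_{i'}$, giving $j^*\notin J_{i'}$.

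Using this disjointness, we pick distinct $j(1)\in J_1,\ldots,j(m)\in J_m$. When $k$ is even, we already have $g(\z)_{j(i)}=t$ for every $i\in[m]$, so $g(\z)$ has at least $m$ coordinates equal to $t$ and we are done. When $k$ is odd the situation is subtler: $g(\z)_{j(i)}$ might equal $t+1$ instead of $t$. The crucial flexibility is that each baton $B_i$ depends on $\z$ only through $\z_{-i}$, so replacing $z_i=t$ by $z_i=t+1$ leaves $j(i)$ and the sign of the progression unchanged while toggling $g(\z)_{j(i)}$ between $t$ and $t+1$. The hardest part of the proof is therefore to choose $\z\in\{t,t+1\}^m$ consistently so that $g(\z)_{j(i)}=t$ for all $i$ simultaneously; this I would handle by a short inductive argument on $m$, applying the inductive hypothesis to the restriction of $g$ to the slice $[k]_0^{m-1}\times\{t\}$ to fix $m-1$ of the coordinates, and then using Lemma~\ref{lemma baton embed} applied to the remaining axis baton together with the disjointness established above to add the last $t$-valued coordinate without disturbing the first $m-1$.
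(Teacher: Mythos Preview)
Your setup (the axis batons $B_i$, the witness sets $J_i$, and the disjointness argument) is essentially the same as the paper's, and for even $k$ your proof is complete and coincides with theirs. The problem is the odd-$k$ case.

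The toggling idea does not compose. You correctly observe that changing $z_i$ leaves $B_i$ and hence $j(i)$ untouched; but changing $z_i$ \emph{does} change $B_{i'}$ for every $i'\ne i$, since $B_{i'}$ is determined by $\z_{-i'}$. So once you toggle several coordinates, the point $\z$ no longer lies on the original batons $B_{i'}$, and the values $g(\z)_{j(i')}$ you already secured are no longer pinned. The inductive patch you sketch has the same defect one level up: after applying the hypothesis to the slice $[k]_0^{m-1}\times\{t\}$ to obtain $\z'$ with $g(\z',t)_{j_1}=\dots=g(\z',t)_{j_{m-1}}=t$, if the $m$-th baton forces you to replace $z_m=t$ by $z_m=t+1$, all you know is $|g(\z',t+1)_{j_i}-t|\le 1$, so each of the first $m-1$ coordinates can slip to $t\pm 1$. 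Nothing in the disjointness $j_m\notin\{j_1,\dots,j_{m-1}\}$ prevents this. (There is also a secondary issue: the inductive hypothesis, as stated in the Lemma, does not record that the $j_i$ arise as witness coordinates, which you need in order to invoke disjointness against $j_m$.)

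The paper avoids the circularity by not insisting that $\z$ lie on the batons. It fixes the batons once and for all through the point $(t,\dots,t)$, with endpoints $\x^i=(t,\dots,0,\dots,t)$ and $\y^i=(t,\dots,k,\dots,t)$, picks a witness $j_i$ for each, and then \emph{defines} $z_i\in\{t,k-t\}$ according to the orientation on $j_i$. The resulting $\z$ need not sit on any $B_i$, but a direct check gives $\|\z-\x^i\|_\infty=t$ and $\|\y^i-\z\|_\infty=k-t$ (because every off-axis difference $|z_{i'}-t|$ is at most $1\le t$ for $k\ge 2$). These two distances, together with $g(\x^i)_{j_i}=0$ and $g(\y^i)_{j_i}=k$, squeeze $g(\z)_{j_i}=t$ for every $i$ simultaneously.
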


\begin{proof}
	For each $1\le i \le m$ we consider $\x^i = (t, \,\dots\,, t, 0, t, \,\dots\,, t), \y^i = (t, \,\dots\,, t, k, t, \,\dots\,, t) \in [k]_0^m$, where the only different coordinate for $\x^i$ and $\y^i$ has index equal to $i$. Since $\|\y^i-\x^i\|_\infty = k$ and $g$ is an isometry, there exists $1\le j_i \le n$ such that either $g(\x^i)_{j_i} = 0$ and $g(\y^i)_{j_i} = k$, or $g(\x^i)_{j_i} = k$ and $g(\y^i)_{j_i} = 0$. In the former case we set $z_i=t$, while in the latter case we set $z_i = k-t$ and switch the roles of $\x^i$ and $\y^i$. Put $\z = (z_1,\,\ldots\,, z_m) \in [k]_0^m$.
	
	One may easily check that $\|\y^i-\x^{i'}\|_\infty  \le \max\{t,k-t\} < k$ for any $i\ne i'$. Thus, we conclude that $j_{i} \neq j_{i'}$ for all distinct $i$ and $i'$. 
	
	Given $1\le i \le m$, we have $|y^i_i-z_i|=k-t$ and $|z_i-x^i_i|=t$ by construction. Moreover, for all $i'\ne i$, it is not hard to see that the value $|y^i_{i'}-z_{i'}|=|z_{i'}-x^i_{i'}|=|z_{i'}-t|$ is either $0$ or $1$. Thus, we conclude that $\|\y^i-\z\|_{\infty} = k-t$ and $\|\z-\x^i\|_{\infty} = t$.
	
	Since $g$ is an isometry, we get that $\|g(\y^i)-g(\z)\|_\infty =k-t$ and  $\|g(\z)-g(\x^i)\|_\infty = t$ for all $1\le i\le m$. In particular, $|k-g(\z)_{j_i}| = |g(\y^i)_{j_i}-g(\z)_{j_i}| \le k-t$ and $|g(\z)_{j_i}-0|=|g(\z)_{j_i}-g(\x^i)_{j_i}| \le t$. Hence, we have $g(\z)_{j_i} = t$ for all $1\le i \le m$.
\end{proof}
Note that the choice of $t$ was arbitrary in the proof, since we could have proceeded in a similar way for other integers $t$ such that $\frac{1}{4}< \frac{t}{k} <  \frac{3}{4}$. However, the statement of Lemma~\ref{k=2t} is not valid for $t=0$ and $t=k$.

The following proposition concludes the proof of Theorem~\ref{Bkm3}.

\begin{Proposition} \label{Bkm2}
	For all $k \ge 2$ and for all $m,n \in \N$, one has
	\begin{equation*} \label{Bkm2 1}
	\alpha([k]_0^n, \B_k^m) \ge \sum_{i=0}^{m-1} \binom{n}{i}k^{n-i}.
	\end{equation*}
\end{Proposition}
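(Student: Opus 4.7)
The plan is to exhibit an explicit $\B_k^m$-free subset of $[k]_0^n$ of the required cardinality by ``forbidding the middle value on too many coordinates.'' Set $t = \lfloor k/2 \rfloor$ and define
\[
A = \Bigl\{\x \in [k]_0^n : \bigl|\{j \in [n] : x_j = t\}\bigr| \le m-1\Bigr\},
\]
i.e., the set of vectors in which fewer than $m$ coordinates equal $t$. This choice is dictated precisely by the conclusion of Lemma~\ref{k=2t}, which forces any isometric copy of $\B_k^m$ to contain a point with at least $m$ coordinates equal to $t$.

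First I would verify the count. Partitioning $A$ by the number $i \in \{0,1,\dots,m-1\}$ of coordinates equal to $t$, there are $\binom{n}{i}$ ways to pick the positions holding $t$, while each of the remaining $n-i$ coordinates can be filled with any of the $k$ values in $[k]_0 \setminus \{t\}$. Summing over $i$ yields exactly $|A| = \sum_{i=0}^{m-1}\binom{n}{i}k^{n-i}$, as required.

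Next I would check that $A$ contains no isometric copy of $\B_k^m$. If $m > n$, then since $k \ge 2$ we have $|\B_k^m| = (k+1)^m > (k+1)^n = |[k]_0^n|$, so there is no isometric copy of $\B_k^m$ anywhere inside $[k]_0^n$ and the claim is vacuous. If instead $m \le n$ and some isometry $g : [k]_0^m \to [k]_0^n$ had image contained in $A$, then Lemma~\ref{k=2t} would produce a point $\z \in [k]_0^m$ whose image $g(\z)$ has at least $m$ coordinates equal to $t$, directly contradicting the defining condition of $A$.

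The whole argument is essentially a one-line invocation of Lemma~\ref{k=2t} once the right ``forbidden middle value'' construction is identified; the substantive combinatorial work was carried out earlier in establishing that lemma. The only mild subtlety is remembering to treat the degenerate range $m > n$ separately, where Lemma~\ref{k=2t} does not apply but the statement follows trivially from cardinality together with the binomial identity $\sum_{i=0}^{n}\binom{n}{i}k^{n-i}=(k+1)^n$.
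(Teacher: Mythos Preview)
Your proof is correct and follows exactly the paper's approach: the same construction $A=\{\x\in[k]_0^n:|\{j:x_j=t\}|<m\}$ with $t=\lfloor k/2\rfloor$, the same cardinality count, and the same appeal to Lemma~\ref{k=2t}. Your explicit handling of the degenerate case $m>n$ (where Lemma~\ref{k=2t} does not formally apply) is a small improvement over the paper, which glosses over this.
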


\begin{proof}
Modulo the lemma above, the proof is straightforward. Set $t = \lfloor \frac{k}{2} \rfloor$. Let $A$ be a subset of $[k]_0^n$ defined by
	\begin{equation*}
		A = \big\{\x \in [k]_0^n : |\{i : x_i=t\}| < m\big\}.
	\end{equation*}
 Lemma \ref{k=2t} guarantees that $A$ does not contain a copy of $\B_k^m$. Thus,
	\begin{equation*}
	\alpha([k]_0^n, \B_k^m) \ge |A| = \sum_{i=0}^{m-1} \binom{n}{i}k^{n-i}. \qedhere
	\end{equation*}
\end{proof}

\subsection{Proof of Theorem \ref{th chi grid}}

Recall that, given a positive $\lambda \in \R$ and an integer $k \in \N$, we denote a baton $\B(\lambda,\,\,\dots\,,\lambda)$ with $k$ lambdas by $\B_k(\lambda)$.

First, observe that the upper bound in the desired asymptotic equality
\begin{equation*}
	\chi\big(\R_\infty^n, \B_{k}(\lambda_1)\times\dots\times \B_{k}(\lambda_m)\big) = \left(\frac{k+1}{k}+o(1)\right)^n
\end{equation*}
is immediate from 
\begin{equation*}
	\chi\big(\R_\infty^n, \B_{k}(\lambda_1)\times\dots\times \B_{k}(\lambda_m)\big) \le \chi\big(\R_\infty^n, \B_{k}(\lambda_1)\big) = \chi\big(\R_\infty^n, \B_{k}\big) = \left( \frac{k+1}{k}+o(1)\right)^n,
\end{equation*}
where the last equality is from Theorem~\ref{th chi int baton}. The key additional ingredient in the proof of the lower bound is to study the independence number
$\alpha\big(X^n, \B_{k}(\lambda_1)\times\dots\times \B_{k}(\lambda_m)\big)$ for a particular finite subset $X \subset \R$.

Given a natural $k \in \N$ and a positive $\lambda \in \R$, we call a subset $X\subset\R$ to be {\it $\B_k(\lambda)$-tessellatable} if and only if $\B_k(\lambda)$ tiles $X$, i.e. one can partition $X$ into translates of $\B_k(\lambda)$. (Here, think of $\B_k(\lambda)$ as of a set.)

\begin{Lemma} \label{lemma tessel}
	Given $m$, $k_1,\,\dots\,,k_m \in \N$, and positive  reals $\lambda_1\le\dots\le\lambda_m$, suppose that a finite subset $X\subset \R$ is $\B_{k_1}(\lambda_1)$-tessellatable. Then for all $n \in \N$, we have
\begin{align*}
	\alpha\big(X^n, \B_{k_1}(\lambda_1)\times\dots\times \B_{k_m}(\lambda_m) \big) \le k_1\cdot\frac{|X|}{k_1+1}\,\cdot\, & \alpha\big(X^{n-1}, \B_{k_1}(\lambda_1)\times\dots\times \B_{k_m}(\lambda_m) \big)\\
	 + \frac{|X|}{k_1+1} \,\cdot\, & \alpha\big(X^{n-1}, \B_{k_2}(\lambda_2)\times\dots\times \B_{k_m}(\lambda_m) \big).
\end{align*}
\end{Lemma}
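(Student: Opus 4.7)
The plan is to prove a per-translate estimate. Fixing one translate $T=\{t_0,\ldots,t_{k_1}\}$ of $\B_{k_1}(\lambda_1)$ in the tessellation of $X$ and setting $A_i:=\{\y\in X^{n-1}:(t_i,\y)\in A\}$ for any $\M$-free $A\subset X^n$, I will establish
\[\sum_{i=0}^{k_1}|A_i|\;\le\; k_1\,\alpha(X^{n-1},\M)+\alpha(X^{n-1},\M'),\]
where $\M:=\B_{k_1}(\lambda_1)\times\dots\times\B_{k_m}(\lambda_m)$ and $\M':=\B_{k_2}(\lambda_2)\times\dots\times\B_{k_m}(\lambda_m)$. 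Summing this bound over the $|X|/(k_1+1)$ translates of the tessellation of $X$ then yields the inequality of the lemma.

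Two structural observations about the fibers $A_i\subset X^{n-1}$ drive the argument. First, the full intersection $\bigcap_{i=0}^{k_1}A_i$ is $\M'$-free: any copy $C'\subseteq\bigcap_i A_i$ of $\M'$ would lift to $T\times C'\cong\B_{k_1}(\lambda_1)\times\M'=\M$, a copy of $\M$ inside $A$, contradicting $\M$-freeness. Second -- and this is where the real work lies -- every consecutive union $A_i\cup A_{i+1}$ is itself $\M$-free in $X^{n-1}$. To see this, suppose $C\subseteq A_i\cup A_{i+1}$ is an isometric copy of $\M$; for each $\y\in C$ choose $j(\y)\in\{i,i+1\}$ with $\y\in A_{t_{j(\y)}}$ and form $\tilde C:=\{(t_{j(\y)},\y):\y\in C\}\subset A$. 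Any two points of $\tilde C$ are at $\ell_\infty$-distance $\max\!\bigl(|j(\y)-j(\y')|\lambda_1,\|\y-\y'\|_\infty\bigr)$; the first argument is at most $\lambda_1$, while the ordering hypothesis $\lambda_1\le\dots\le\lambda_m$ forces the minimum positive distance of $\M$ to be exactly $\lambda_1$, so the second argument is at least $\lambda_1$. The maximum therefore equals $\|\y-\y'\|_\infty$, making $\tilde C$ isometric to $C$, hence an isometric copy of $\M$ inside $A$ -- again a contradiction.

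With both observations in hand, the per-translate bound follows by a short telescoping induction. Setting $I_j:=A_0\cap\dots\cap A_j$, I claim that $\sum_{i=0}^{j}|A_i|\le j\,\alpha(X^{n-1},\M)+|I_j|$ for every $0\le j\le k_1$. The case $j=0$ is an equality; for the inductive step, the identity
\[|I_j|+|A_{j+1}|=|I_j\cup A_{j+1}|+|I_{j+1}|,\]
combined with the inclusion $I_j\cup A_{j+1}\subseteq A_j\cup A_{j+1}$ and the $\M$-freeness of $A_j\cup A_{j+1}$, yields $|I_j|+|A_{j+1}|\le\alpha(X^{n-1},\M)+|I_{j+1}|$, which advances the induction. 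Taking $j=k_1$ and invoking $|I_{k_1}|\le\alpha(X^{n-1},\M')$ from the first observation completes the per-translate estimate, and summing over the tessellation finishes the proof. The only genuinely delicate ingredient is the $\M$-freeness of consecutive unions, which is precisely where the ordering hypothesis on the $\lambda_j$'s is essential.
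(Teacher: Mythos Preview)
Your proof is correct (modulo a harmless typo: ``$\y\in A_{t_{j(\y)}}$'' should read ``$\y\in A_{j(\y)}$''), but the packaging differs from the paper's. The paper removes from $A$ one point of each ``full'' fibre, then applies a compression (``head-moving'') function that shifts the head within its translate $X_j$ up by $\lambda_1$ whenever possible; the image avoids the bottom element $h_j$ of each translate and hence splits into $k_1\cdot|X|/(k_1+1)$ layers, each of which is shown to be $\M$-free and thus has size at most $\alpha(X^{n-1},\M)$. Your argument replaces this compression by the direct observation that every consecutive union $A_i\cup A_{i+1}$ is $\M$-free, and then runs the telescoping inclusion--exclusion $\lvert I_j\rvert+\lvert A_{j+1}\rvert=\lvert I_j\cup A_{j+1}\rvert+\lvert I_{j+1}\rvert$ down to the full intersection. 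The underlying insight is the same---changing the head coordinate by at most $\lambda_1$ cannot affect $\ell_\infty$-distances because $\lambda_1$ is the minimum distance in $\M$---but your route is more elementary, avoiding the compression machinery entirely, while the paper's compression argument is closer to the shifting technique and perhaps generalises more readily. Either way the per-translate bound and the final summation are identical.
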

The proof of this lemma generalises the argument from the proof of Proposition~\ref{Bkm1}.
\begin{proof}	
	Let $A \subset X^n$ be a subset of cardinality
	\begin{equation*}
		|A| = \alpha\big(X^n, \B_{k_1}(\lambda_1)\times\dots\times \B_{k_m}(\lambda_m)\big)
	\end{equation*}
	with no copy of $\B_{k_1}(\lambda_1)\times\dots\times \B_{k_m}(\lambda_m)$.
	
	Since $X$ is $\B_{k_1}(\lambda_1)$-tessellatable, we can partition it into $r=\frac{|X|}{k_1+1}$ sets $X_1,\,\dots\,,X_r$, where for all $j \in [r]$ there is $h_j\in X$ such that $X_j = \B_{k_1}(\lambda_1)+h_j$. We use the notions of {\it head} and {\it tail}, introduced in the proof of Proposition~\ref{Bkm1}. For a vector $\y \in X^{n-1}$, set $H_j(\y) = \{x \in X_j: (\y, x) \in A\}$. In other words, $H_j(\y)$ are the heads from $X_j$ that extend the tail $\y$.
	
	Given $j \in [r]$, set $Y_j = \{\y \in X^{n-1}: H_j(\y) = X_j\}$. One can see that
	\begin{equation} \label{eq lemma tessel 1}
		|Y_j| \le  \alpha\big(X^{n-1}, \B_{k_2}(\lambda_2)\times\dots\times \B_{k_m}(\lambda_m) \big).
	\end{equation}
	Indeed, if \eqref{eq lemma tessel 1} does not hold, then by the definition of the independence number there is a subset $S\subset Y_j$ that is copy of $\B_{k_2}(\lambda_2)\times\dots\times \B_{k_m}(\lambda_m)$. In this case,  the subset $\{(\y, x): \y \in S, x \in X_j\} \subset A$ is a copy of $\B_{k_1}(\lambda_1)\times\dots\times \B_{k_m}(\lambda_m)$, a contradiction.
	
	Put $Y' \coloneqq \{(\y,h_j): j \in [r], \y \in Y_j\}$ and define $A' = A\!\setminus\! Y'$. 
	It is clear that
	\begin{equation} \label{eq lemma tessel 2}
		|A| = |A'|+|Y'| = |A'|+ \sum_{j=1}^{r}|Y_j|.
	\end{equation}
	Using \eqref{eq lemma tessel 1} and \eqref{eq lemma tessel 2}, it remains only to give an upper bound on $|A'|$ of the form
\begin{equation}\label{eq lemma tessel 23}
	|A'|\le k_1\cdot \frac{|X|}{k_1+1}\cdot\alpha\big(X^{n-1}, \B_{k_1}(\lambda_1)\times\dots\times \B_{k_m}(\lambda_m) \big).
\end{equation}

To do so we define a head-moving function $f:A'\rightarrow X^n$, resembling the one in the proof of Proposition~\ref{Bkm1}. It keeps the head of each element inside its $X_j$ but increases it by $\lambda_1$ `whenever possible'. Formally, for all $j \in [r]$ and for a vector $\x\in A'$ such that $h(\x) \in X_j$, we set
	\begin{equation*}
		f(\x) =
		\begin{cases}
			\left(t(\x),h(\x)+\lambda_1\right) & \mbox{if } \exists\, x \in X_j\!\setminus\! H_j\left(t(\x)\right) \mbox{ such that } x > h(\x);\\
			\x & \mbox{otherwise.}
		\end{cases}
	\end{equation*}

	One can easily repeat the same argument from the proof of Proposition~\ref{Bkm1} to verify the following two properties of this function. First, $f$ is an injection. Second, for all $\x \in A'$ and $j\in [r]$, we have $h(f(\x)) \neq h_j$. Thus, we can split the image of $f$ into $|X|-r = k_1\cdot\frac{|X|}{k_1+1}$ layers based on their heads:
	\begin{equation} \label{eq lemma tessel 3}
		|A'| = \sum_{x \in X\setminus\{h_1,\dots,h_r\}} |A_x'|,
	\end{equation}
	where
	\begin{equation*}
		A_x' = \{f(\x): \x \in A' \mbox{ and } h(f(\x)) =x \}.
	\end{equation*}

As in the case of Proposition~\ref{Bkm1}, we verify that no $A_x'$ contains a copy of $\B_{k_1}(\lambda_1)\times\dots\times \B_{k_m}(\lambda_m)$. Indeed, assume the contrary, i.e., that there exists $S \subset A_x'$ that is a copy of $\B_{k_1}(\lambda_1)\times\dots\times \B_{k_m}(\lambda_m)$. On the one hand, for all $(\y,x) \in S$ the preimage $f^{-1}(\y,x)$ is either $(\y,x)$ or $(\y,x-\lambda_1)$. On the other hand, since $\lambda_i\ge \lambda_1$ for any $i\in [m]$, the distance $\|\y^1-\y^2\|_{\infty}$ between any two distinct points $(\y^1,x), (\y^2,x)$ in $S$ is at least $\lambda_1$. Thus, $f$ could not change the distances between these points. Therefore, the preimage $f^{-1}(S) \subset A$ is also a copy  of $\B_{k_1}(\lambda_1)\times\dots\times \B_{k_m}(\lambda_m)$, a contradiction.

So, for all $x$ the set $A_x'$ is indeed $\B_{k_1}(\lambda_1)\times\dots\times \B_{k_m}(\lambda_m)$-free, and thus
	\begin{equation} \label{eq lemma tessel 4}
		|A_x'| \le \alpha\big(X^{n-1}, \B_{k_1}(\lambda_1)\times\dots\times \B_{k_m}(\lambda_m) \big).
	\end{equation}
Now we combine inequalities \eqref{eq lemma tessel 3} and \eqref{eq lemma tessel 4} to get \eqref{eq lemma tessel 23} and finish the proof.
\end{proof}

Solving the recurrence relation in the statement of the lemma, we get the following corollary.

\begin{Corollary}\label{cor11} Given $k,m \in \N$, and  positive  reals $\lambda_1\le\dots\le\lambda_m$, suppose that a finite subset $X\subset \R$ is $\B_{k}(\lambda_i)$-tessellatable for all $i \in [m]$. Then for all $n \in \N$, we have
$$\alpha\big(X^n, \B_{k}(\lambda_1)\times\dots\times \B_{k}(\lambda_m) \big) \le \Big(\frac{|X|}{k+1}\Big)^n\cdot \sum_{i=0}^{m-1}{n\choose i}k^{n-i}.$$
\end{Corollary}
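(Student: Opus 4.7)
The plan is to apply Lemma~\ref{lemma tessel} repeatedly, which I implement as induction on $n$. Abbreviate $c \coloneqq |X|/(k+1)$ and $\alpha_{n,m} \coloneqq \alpha\bigl(X^n,\B_{k}(\lambda_1)\times\dots\times\B_{k}(\lambda_m)\bigr)$. The tessellatability hypothesis restricts to every suffix $\lambda_j\le\dots\le\lambda_m$, so Lemma~\ref{lemma tessel} (specialised to the case where all the $k_i$ coincide with $k$) gives the two-variable recurrence
$$\alpha_{n,m}\ \le\ kc\cdot\alpha_{n-1,m}\ +\ c\cdot\alpha_{n-1,m-1}$$
valid for every $n,m\ge 1$. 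For the base cases I take $\alpha_{0,m}=1$ (a single point is trivially $\B$-free for any $\B$ of size at least two) and $\alpha_{n,0}=0$ under the convention that the empty Cartesian product is a one-point metric space, so every nonempty subset of $X^n$ contains a copy of it.

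Next, denote the conjectured bound by $f(n,m)\coloneqq c^n\sum_{i=0}^{m-1}\binom{n}{i}k^{n-i}$. The content of the proof is then the algebraic identity
$$kc\cdot f(n-1,m)\ +\ c\cdot f(n-1,m-1)\ =\ f(n,m).$$
I would verify this by factoring out $c^n$ on the left, reindexing the second sum through $j=i+1$, isolating the $i=0$ term (which contributes $k^n = \binom{n}{0}k^n$), and collapsing the remaining sum via Pascal's rule $\binom{n-1}{i}+\binom{n-1}{i-1}=\binom{n}{i}$; the output is precisely $c^n\sum_{i=0}^{m-1}\binom{n}{i}k^{n-i}=f(n,m)$. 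A direct check also shows $f(0,m)=\binom{0}{0}=1$ and $f(n,0)=0$, so $f$ matches the base cases.

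Comparing the two displays and inducting on $n$ immediately yields $\alpha_{n,m}\le f(n,m)$, which is the claimed inequality. There is no real obstacle here beyond bookkeeping: once Lemma~\ref{lemma tessel} is in hand, the statement reduces to solving a linear recurrence in $(n,m)$, and Pascal's identity supplies exactly the right cancellation. The only subtlety worth flagging is that when the recurrence drops $m$ to $m-1$, one must be able to apply Lemma~\ref{lemma tessel} again to the shorter tuple $(\lambda_2,\dots,\lambda_m)$, which is legitimate precisely because $X$ is assumed $\B_k(\lambda_i)$-tessellatable for \emph{all} $i\in[m]$ rather than for the minimum alone.
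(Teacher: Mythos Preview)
Your proof is correct and follows essentially the same route as the paper's: both invoke Lemma~\ref{lemma tessel} to get the recurrence $\alpha_{n,m}\le kc\,\alpha_{n-1,m}+c\,\alpha_{n-1,m-1}$, check the base cases at $m=0$ and $n=0$, and verify that the target expression satisfies the same recurrence via Pascal's rule. The only cosmetic difference is that the paper phrases it as simultaneous induction on $(m,n)$ while you phrase it as induction on $n$ with the statement quantified over all $m$; your closing remark about the shifted tuple $(\lambda_2,\dots,\lambda_m)$ correctly handles the one point where the notation $\alpha_{n-1,m-1}$ is slightly abusive.
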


\begin{proof} As in the proof of Proposition~\ref{Bkm1}, for any fixed $k\in \N$ and $X \subset \R$, we use induction on $m$ and $n$. More precisely, we use the assumption for $(m-1,n-1)$ and $(m,n-1)$ to prove the statement for $(m,n)$. Since the $0$-ary Cartesian product is a singleton by definition, it is clear that the desired inequality is vacuous for $m=0$ or $n=0$. We use these trivial cases as the base of induction.

So, we turn to the induction step. Assume that both $m$ and $n$ are positive. Using Lemma~\ref{lemma tessel} and the induction hypothesis  we have
{\footnotesize	\begin{align*}
\alpha\big(X^n, \B_{k}(\lambda_1)\times\dots\times \B_{k}(\lambda_m) \big) \le &\frac{k\cdot|X|}{k+1}\cdot\alpha\big(X^{n-1}, \B_{k}(\lambda_1)\times\dots\times \B_{k}(\lambda_m) \big)+	  \frac{|X|}{k+1} \cdot \alpha\big(X^{n-1}, \B_{k}(\lambda_2)\times\dots\times \B_{k}(\lambda_m) \big)\\
\le &\frac{k\cdot|X|}{k+1}\cdot\Big(\frac{|X|}{k+1}\Big)^{n-1} \cdot \sum_{i=0}^{m-1}{n-1\choose i}k^{n-1-i}+ \frac{|X|}{k+1} \cdot \Big(\frac{|X|}{k+1}\Big)^{n-1} \cdot \sum_{i=0}^{m-2}{n-1\choose i}k^{n-1-i}\\
=& \Big(\frac{|X|}{k+1}\Big)^{n} \cdot \sum_{i=0}^{m-1}{n-1\choose i}k^{n-i}+ \Big(\frac{|X|}{k+1}\Big)^{n} \cdot \sum_{i=1}^{m-1}{n-1\choose i-1}k^{n-i}\\
=&\Big(\frac{|X|}{k+1}\Big)^n\cdot \sum_{i=0}^{m-1}{n\choose i}k^{n-i}. \qedhere
	\end{align*}}
\end{proof}

Observe that one can assume without loss of generality that the sequence $\lambda_1,\,\dots\,,\lambda_m$ from the statement of Theorem~\ref{th chi grid} is non-decreasing. Thus, from Corollary~\ref{cor11} and \eqref{eqpighole} it easily follows that, if such a tessellatable $X$ exists, then, as $n \rightarrow \infty$, we have
\begin{equation*}
	\chi\big(\R_\infty^n, \B_{k}(\lambda_1)\times\dots\times \B_{k}(\lambda_m)\big) \ge  \ \frac{|X|^n}{\alpha\big(X^n, \B_{k}(\lambda_1)\times\dots\times \B_{k}(\lambda_m) \big)}   \ge \left(\frac{k +1}{k}+o(1)\right)^n.
\end{equation*}

Hence, to complete the proof of Theorem~\ref{th chi grid} it is enough to show that there is a finite set $X \subset \R$ that satisfies conditions of Corollary~\ref{cor11}.

\subsection{Tessellatable and almost tessellatable sets}

Let us consider some examples. Both $\B_1(2)$ and $\B_1(3)$ tessellates $X=[12]$, since
\begin{align*}
	[12] = & \ \{1,3\}\cup\{2,4\} \cup\{5,7\}\cup\{6,8\} \cup\{9,11\}\cup\{10,12\} \\
	= & \ \{1,4\}\cup\{2,5\} \cup\{3,6\}\cup\{7,10\} \cup\{8,11\}\cup\{9,12\}.
\end{align*}
More generally, if $\lambda_i \in \N$ for all $i \in [m]$ then 
\begin{equation*}
	X = \big[(k+1)\cdot\text{lcm}(\lambda_1,\,\dots\,, \lambda_m)\big],
\end{equation*}
is $\B_k(\lambda_i)$-tessellatable for all $i \in [m]$, where lcm stands for the least common multiple. 

Another simple example is the case when all $\lambda_i$ are linearly independent over $\Z$. In this case, each $\B_{k}(\lambda_i)$ tessellates their Minkowski sum
\begin{equation*}
	X=\B_{k}(\lambda_1)+\dots+\B_{k}(\lambda_m) =
	\{c_1\lambda_1+\dots+ c_m\lambda_m: c_i \in [k]_0 \mbox{ for all } i \in [m]\}.
\end{equation*}

Unfortunately, it is not clear how to construct such an $X$ in general, and we suspect that it may not always exist, see the discussion in~\cite{mathoverflow}. However, the argument from the proof of Lemma~\ref{lemma tessel} may work in a weaker setting.

Given $m \in \N$, $k_1,\,\dots\,,k_m \in \N$, positive  reals $\lambda_1\le\dots\le\lambda_m$, and a finite $X \subset \R$, assume that a subset $Y \subset X$ is $\B_{k_1}(\lambda_1)$-tessellatable. Running the same argument as in  Lemma~\ref{lemma tessel}, one can show that for all $n \in \N$ we have

\begin{align} \label{eq tessel}
	\alpha\big(X^n, \B_{k_1}(\lambda_1)\times\dots\times \B_{k_m}(\lambda_m) \big) \le \left(|X|-\frac{|Y|}{k_1+1}\right)\cdot& \, \alpha\big(X^{n-1}, \B_{k_1}(\lambda_1)\times\dots\times \B_{k_m}(\lambda_m) \big) \nonumber \\
	 + \left(\frac{|Y|}{k_1+1}\right) \cdot&\, \alpha\big(X^{n-1}, \B_{k_2}(\lambda_2)\times\dots\times \B_{k_m}(\lambda_m) \big).
\end{align}

Let $k \in \N$, $\lambda \in \R$, and $\varepsilon < 1$ be positive. We call a finite $X\subset\R$ {\it $\varepsilon$-almost $\B_k(\lambda)$-tessellatable} if and only if there is a $\B_k(\lambda)$-tessellatable subset $Y\subset X$ of cardinality at least $(1-\varepsilon)|X|$.

Given $k, m \in \N$, positive reals $\lambda_1 \leq \dots \leq \lambda_m$, and a small positive $\varepsilon < 1$, assume that there is a finite subset $X\subset\R$ such that $X$ is $\varepsilon$-almost $\B_{k}(\lambda_i)$-tessellatable for all $i \in [m]$. Then, by solving the recurrence relation \eqref{eq tessel}, one can get the following inequality that generalises Corollary~\ref{cor11}:
$$\alpha\big(X^n, \B_{k}(\lambda_1)\times\dots\times \B_{k}(\lambda_m) \big) \le \Big(\frac{|X|}{k+1}\Big)^n\cdot \sum_{i=0}^{m-1}{n\choose i}(k+\varepsilon)^{n-i}.$$
Therefore, as $n \rightarrow \infty$, we get that
\begin{equation*}
	\chi\big(\R_\infty^n, \B_{k}(\lambda_1)\times\dots\times \B_{k}(\lambda_m)\big) \ge  \ \frac{|X|^n}{\alpha\big(X^n, \B_{k}(\lambda_1)\times\dots\times \B_{k}(\lambda_m) \big)}   \ge \left(\frac{k+1}{k+\varepsilon}+o(1)\right)^n.
\end{equation*}
Clearly, if we could make $\varepsilon$ arbitrarily small, we would obtain the desired asymptotic lower bound
\begin{equation*}
	\chi\big(\R_\infty^n, \B_{k}(\lambda_1)\times\dots\times \B_{k}(\lambda_m)\big) \ge \left(\frac{k+1}{k}+o(1)\right)^n.
\end{equation*}
So, to finish the proof of Theorem~\ref{th chi grid} it remains only to prove the following lemma.

\begin{Lemma} \label{lemma exist almost tessel}
	Given $k, m \in \N$, positive  reals $\lambda_1,\,\dots\,,\lambda_m$ and a positive $\varepsilon < 1$, there is a finite subset $X\subset\R$ such that $X$ is $\varepsilon$-almost $\B_{k}(\lambda_i)$-tessellatable for all $i \in [m]$.
\end{Lemma}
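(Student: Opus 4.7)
\textbf{Proof plan for Lemma~\ref{lemma exist almost tessel}.} The plan is to transport the problem to the abelian group $\Z^t$ via the same isomorphism used in Section~\ref{sec4}, and there to tile a large box by a lattice arithmetic progression up to a negligible boundary. Set $\Lambda=\langle\lam\rangle$ and fix a group isomorphism $\varphi:\Lambda\to\Z^t$; write $\mathbf{c}_i=\varphi(\lambda_i)\in\Z^t\setminus\{0\}$ and $S_i=\varphi(\B_k(\lambda_i))=\{0,\mathbf{c}_i,2\mathbf{c}_i,\dots,k\mathbf{c}_i\}$. Since $0\in\B_k(\lambda_i)$, every translate of $\B_k(\lambda_i)$ contained in $\Lambda$ must be of the form $\B_k(\lambda_i)+t$ with $t\in\Lambda$, and $\varphi$ identifies such tilings of subsets of $\Lambda$ with tilings of subsets of $\Z^t$ by translates of $S_i$. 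I would therefore take $X=\varphi^{-1}(B_N)$, where $B_N=[N]_0^t$ for a large integer $N$ to be chosen at the end, so that it suffices to find, for every $i\in[m]$, a subset $Y_i'\subset B_N$ of size at least $(1-\varepsilon)|B_N|$ that can be tiled by translates of $S_i$; the preimages $Y_i=\varphi^{-1}(Y_i')$ will then certify that $X$ is $\varepsilon$-almost $\B_k(\lambda_i)$-tessellatable.

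To build $Y_i'$ I would partition $\Z^t$ into the cosets of the cyclic subgroup $\langle\mathbf{c}_i\rangle$. Each such coset meets $B_N$ in a finite arithmetic progression $\{p+j\mathbf{c}_i:j_{\min}\le j\le j_{\max}\}$ of some length $L\ge 0$. Parameterised by $j$, this is an integer interval, and the translates of $S_i$ lying on the line correspond to the blocks $\{j_0,j_0+1,\dots,j_0+k\}$; since $\{0,1,\dots,k\}$ tiles $\Z$ with period $k+1$, on this line I can fill an initial segment of length $(k+1)\lfloor L/(k+1)\rfloor$ by disjoint translates of $S_i$, leaving at most $k$ uncovered points on that fibre. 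Let $Y_i'$ be the union of these covered portions over all non-empty cosets intersecting $B_N$.

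The remaining step is to control the total loss uniformly in $i$. Every non-empty coset corresponds to a unique ``starting point'' $p\in B_N$ with $p-\mathbf{c}_i\notin B_N$, i.e.\ some coordinate $s$ satisfies $p_s\notin[c_{is},N+c_{is}]$. A direct case analysis on the sign of $c_{is}$ shows that for each fixed $s$ there are at most $|c_{is}|(N+1)^{t-1}$ such points in $B_N$, so there are at most $\|\mathbf{c}_i\|_1(N+1)^{t-1}$ non-empty cosets in total, and hence at most $k\|\mathbf{c}_i\|_1(N+1)^{t-1}$ wasted points. This yields
\[
\frac{|B_N\setminus Y_i'|}{|B_N|}\le\frac{k\,\|\mathbf{c}_i\|_1}{N+1},
\]
and choosing any $N$ with $N+1>\tfrac{k}{\varepsilon}\max_{i\in[m]}\|\mathbf{c}_i\|_1$ makes the right-hand side strictly less than $\varepsilon$ for all $i$ simultaneously, completing the proof. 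I do not anticipate a serious obstacle here: once the tiling question has been transferred to $\Z^t$ through $\varphi$, the problem reduces to a clean fibration-by-lines counting argument, and the only mildly delicate point is the uniform bookkeeping of the ``boundary'' cosets, whose number is linear in $N^{t-1}$ while $|B_N|$ grows like $N^t$.
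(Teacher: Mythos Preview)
Your proposal is correct and follows essentially the same route as the paper: transfer to $\Z^t$ via the isomorphism $\varphi$, take $X=\varphi^{-1}$ of a large box, and show that all but a negligible boundary of the box can be tiled by translates of $S_i=\varphi(\B_k(\lambda_i))$. The only difference is bookkeeping: the paper first argues that $S_i$ tiles all of $\Z^t$ (by noting that $\B_k(\lambda_i)$ tiles $\R$ and restricting to $\langle\lam\rangle$) and then bounds the untiled part by a ``central subcube'' estimate, whereas you fibre the box into cosets of $\langle\mathbf{c}_i\rangle$ and count entry points directly --- both yield the same $O(N^{t-1})$ loss.
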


\begin{proof}
	As in Section~\ref{sec4}, by $\langle \lam \rangle$ we denote an abelian group generated by $\lambda_1,\,\dots\,,\lambda_m$, i.e.,
	\begin{equation*}
		\langle \lam \rangle = \{a_1\lambda_1+\dots+a_m\lambda_m: a_1,\,\dots\,, a_m \in \Z\}.
	\end{equation*}
	Recall that the fundamental theorem of finitely generated abelian groups implies that $\langle \lam \rangle \cong \Z^t$ for some $t \in [m]$ since $\langle \lam \rangle$ is trivially a torsion-free group.  Let $\varphi: \langle \lam \rangle \rightarrow \Z^t$ be one such isomorphism.
	
	Fix $i \in [m]$ and set $S_i = \varphi\big(\B_{k}(\lambda_i)\big)$. Now we show that $S_i$ tiles $\Z^t$, i.e., that there is $A_i \subset \Z^t$ such that $\Z^t = \bigsqcup_{\x \in A_i} (S_i+\x),$ where $\sqcup$ denotes disjoint union. To do so, we first observe that the union $\bigsqcup_{0 \le x < \lambda_i} (\B_{k}(\lambda_i)+x)$ is disjoint and represents a right-open interval $\big[0; (k+1)\lambda_i\big)$. Thus,
	\begin{equation*}
		\R = \bigsqcup_{j\in \Z} \bigsqcup_{0\le x < \lambda_i}\big(\B_{k}(\lambda_i)+x+j(k+1)\lambda_i\big).
	\end{equation*}
	Since $\langle \lam \rangle$ is a group that contains $\lambda_i$, we can partition it into disjoint translates of $\B_k(\lambda_i)$ as well:
	\begin{equation*}
		\langle \lam \rangle = \bigsqcup_{j\in \Z} \bigsqcup_{\substack{ x \in \langle \lam \rangle \\ 0\le x < \lambda_i}}\big(\B_{k}(\lambda_i)+x+j(k+1)\lambda_i\big).
	\end{equation*}
	By applying the isomorphism $\varphi: \langle \lam \rangle \rightarrow \Z^t$ to both sides of the previous equality, we find the desired representation
	\begin{equation} \label{eq lemma exist almost tessel 1}
		\Z^t = \bigsqcup_{\x \in A_i} (S_i+\x)
	\end{equation}
	with 
	\begin{equation*}
		A_i = \big\{\varphi(x)+j(k+1)\varphi(\lambda_i): j \in \Z, x \in \langle \lam \rangle, 0 \le x < \lambda_i\big\}.
	\end{equation*}

	Given $r \in \N$, set
	\begin{equation*}
		A_{i,r} = \{\x \in A_i: S_i+\x \subset [r]^t\}, \ Y_{i,r}' = \bigsqcup_{\x \in A_{i,r}}(S_i+\x).
	\end{equation*}
	Clearly, $Y_{i,r}'$ is an $S_i$-tessellatable subset of $[r]^t$ by construction. Therefore, $Y_{i,r} \coloneqq \varphi^{-1}(Y_{i,r}')$ is a $\B_{k}(\lambda_i)$-tessellatable subset of $X_r \coloneqq \varphi^{-1}([r]^t) \subset \R$.
	
	Moreover, it follows from \eqref{eq lemma exist almost tessel 1} that there is a $c_i \in \N$ depending only on the diameter of $S_i$ such that for all $r \in \N$, the set $Y_{i,r}'$ contains the `central subcube' of $[r]^t$ with side length of $r-2c_i$, i.e., we have $[c_i+1,r-c_i]^t \subset Y_{i,r}'$. Hence,
	\begin{equation*}
		\frac{|Y_{i,r}|}{|X_r|} = \frac{|Y_{i,r}'|}{|[r]^t|} \ge \frac{(r-2c_i)^t}{r^t} = \left(1-\frac{2c_i}{r}\right)^t.
	\end{equation*}
	In particular, there exists $r_i\in \N$ such that for all $r\ge r_i$, we have $|Y_{i,r}| > (1-\varepsilon)|X_r|$. Thus, we conclude that $X_r$ is $\varepsilon$-almost $\B_k(\lambda_i)$-tessellatable for all $r \ge r_i$.
	
	Now to finish the proof it remains only to put $X \coloneqq X_{r'}$, where $r' = \max_{i}\{r_i\}.$
\end{proof}

\section{Infinite metric spaces}\label{sec6}

We split this section into four parts. In the first one we give the upper bound on $\chi(\R_\infty^n, \M)$ for an infinite metric space $\M$ in a simple special case when $\M$ is unbounded. In the second part we give the upper bound in case of a general (not necessarily unbounded) infinite $\M$ and prove Proposition~\ref{prop upper for infinite intro}. Finally, in the last two parts we give the lower bounds on $\chi(\R_\infty^n, \M)$ in a special case when $\M$ is a geometric progression $\mathcal{G}(q)$ and prove Theorems~\ref{cor geom baton intro} and~\ref{thminf2 intro} respectively.

\subsection{Upper bound in the unbounded case}

Recall that in the Euclidean case, for any  dimension $n$ and any infinite metric space $\mathcal M$, there exists a $2$-colouring of $\R_2^n$ with no monochromatic copy of $\mathcal M$.
In the max-norm case $2$ colours may sometimes be enough as well. The following simple statement is true for any norm and is rather folklore. However, it does not seem to explicitly appear in the literature, thus we include the proof for completeness.

\begin{Proposition} \label{prob unbounded}
	For all $n \in \N$ and for all unbounded metric spaces $\M$, there is a $2$-colouring of~$\R_{\infty}^n$ with no monochromatic copy of $\mathcal M$.
\end{Proposition}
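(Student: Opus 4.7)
The plan is to construct, for each unbounded $\M$, an explicit $2$-colouring of $\R_\infty^n$ whose colour classes depend only on $\|\x\|_\infty$: they will be unions of concentric `shells' aligned with a fast-growing sequence of distances that $\M$ actually realises.

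First I would fix any point $y_0\in \M$, and use $\sup_{y\in \M}\rho_\M(y_0,y)=\infty$ to extract a sequence $y_1,y_2,\ldots \in \M$ with $d_k:=\rho_\M(y_0,y_k)$ satisfying $d_1>0$ and $d_{k+1}\ge 10\,d_k$ for all $k\ge 1$ (just pass to a subsequence). Setting $d_0=0$ and $I_k=[d_k/2,\,d_{k+1}/2)$, the intervals $\{I_k\}_{k\ge 0}$ partition $[0,\infty)$, and I would define
$$c(\x) = k\bmod 2 \quad\text{whenever}\quad \|\x\|_\infty\in I_k.$$

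To show that no monochromatic copy of $\M$ exists, I would suppose for contradiction that $f\colon \M\to \R_\infty^n$ is an isometry whose image is monochromatic, and set $x_k:=f(y_k)$. Letting $k_0$ be the unique index with $\|x_0\|_\infty\in I_{k_0}$, I have $\|x_0\|_\infty<d_{k_0+1}/2$. For any $k\ge k_0+1$, the triangle inequality applied to $\|x_k-x_0\|_\infty=d_k$ forces
$$\|x_k\|_\infty\in [d_k-\|x_0\|_\infty,\,d_k+\|x_0\|_\infty].$$
The key check is that, because $\|x_0\|_\infty<d_k/2$ and $d_{k+1}\ge 10\,d_k$, the last interval lies in $(d_k/2,\,3d_k/2)\subset(d_k/2,\,d_{k+1}/2)=I_k$, so $c(x_k)=k\bmod 2$. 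Taking $k=k_0+1$ then gives $c(x_{k_0+1})\neq c(x_0)$, contradicting monochromaticity.

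The argument is essentially one-dimensional, using only the relation $|\,\|\x\|_\infty-\|\y\|_\infty\,|\le\|\x-\y\|_\infty$. The one mild subtlety — there is no real obstacle — is that the colouring has to be tailored to $\M$: an `off-the-shelf' 2-colouring cannot work for every unbounded $\M$, but the shells $I_k$ can always be aligned with a sufficiently sparse subsequence of the distances from a fixed basepoint $y_0$ that unboundedness of $\M$ furnishes.
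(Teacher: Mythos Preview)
Your proof is correct and follows essentially the same approach as the paper: both colour $\R_\infty^n$ by alternating concentric $\ell_\infty$-shells whose radii are tailored to a fast-growing sequence of distances from a fixed basepoint in $\M$, and then check via the triangle inequality that the image of the basepoint and of a suitable far-away point must land in shells of opposite parity. The only difference is bookkeeping --- the paper interleaves the choice of the points $m_i$ and the radii $h_i$ recursively (obtaining $h_{i+1}>3h_i$), whereas you first extract a geometrically growing subsequence $d_{k+1}\ge 10\,d_k$ and then set the shell boundaries at $d_k/2$.
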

\begin{proof}
	Let $\M=(M,\rho_M)$ be an unbounded metric space. We will find the desired colouring along a sequence of nested hypercubes with sufficiently quickly growing side lengths and alternating colours.
	
	Pick an element $m\in M$, and let $H_0=\emptyset$. We will recursively define a sequence of nested closed hypercubes $H_i = [-h_i,h_i]^n$ and elements $m_i \in M$ for all $i \in \N$ with the following properties: 
	\begin{enumerate}
		\item $\bigcup_i H_i=\mathbb{R}^n$, i.e. $h_i \rightarrow \infty$ as $i \rightarrow \infty$.
		\item For any $i\in \mathbb{N}$ and for any isometry $\varphi:\mathcal{M}\to \mathbb{R}^n_{\infty}$, if $\varphi(m)\in H_{i}\setminus H_{i-1}$, then $\varphi(m_i)\in H_{i+1}\setminus H_{i}$.
	\end{enumerate}
	By colouring each `ring' $H_i\!\setminus\! H_{i-1}$ with colour $i$ mod $2$ we clearly get a proper  $2$-colouring, since any embedding $\M$ into $\mathbb{R}_{\infty}^n$ intersects two consecutive rings.
	
	To find such sequences, in the first step choose $h_1=1$. Assume that we have already defined $m_1, \,\dots\, ,m_{i-1}$ and $h_1,\,\dots\,,h_{i}$. Then in the $i'$th step pick $m_{i} \in M$ so that
	$$\rho_M(m,m_i) > \text{diam}(H_i)=2h_i$$
	(this is possible, since $\mathcal{M}$ is not bounded) and put
	$$h_{i+1}=h_{i}+\rho_M(m,m_i).$$
	
	We clearly have $h_{i+1}> 3h_i$ by construction, and thus  $h_i\rightarrow \infty$ as $i \rightarrow \infty$. To check the second required property fix an isometry $\varphi:\mathcal{M}\to \mathbb{R}^n_{\infty}$. Assume that $\varphi(m)\in H_{i}\!\setminus\! H_{i-1}$. Observe that the choice of $m_i$ ensures that $\varphi(m_i) \not\in H_i$, while the size of $h_{i+1}$ guarantees that $\varphi(m_i) \in H_{i+1}$, which completes the proof.
\end{proof}

\subsection{General upper bound: proof of Proposition~\ref{prop upper for infinite intro}}

Now we turn to the proof of the general inequality $\chi(\R_\infty^n, \M) \le n+1$, that holds for all $n \in \N$ and for an arbitrary (not necessarily unbounded) infinite $\M \subset \R_{\infty}^n$.

We will need the following well-known application of the Lov\'asz Local Lemma. The first version of this result appeared in a paper \cite{ErLo1975} of Erd\H{o}s and Lov\'asz himself, while its later improvements are due to Alon--K\v{r}\'\i\v{z}--Ne\v{s}et\v{r}il \cite{AlKrNes} and Harris--Srinivasan \cite{HarSrin}, see also Axenovich et al. \cite{Axe}.

\begin{Theorem}[\cite{HarSrin}] \label{poly}
	For all $n \in \N$, there exists $k=k(n) \in \N$ such that for every subset $S \subset \R$ of size $k+1$ the following holds. There exists a partition $\R = \bigsqcup_{i=1}^{n+1}C_i$ of $\R$ into $n+1$ disjoint parts such that for all $x \in \R$ both sets $S+x$ and $-S+x$ intersect each part.  Moreover, one may assume that $k(n) = (1+o(1))n\ln n$ as $n \rightarrow \infty$.
\end{Theorem}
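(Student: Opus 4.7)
This is a panchromatic colouring statement — every translate or reflection of $S$ must meet each of $n+1$ colour classes — and the natural tool is the Lov\'asz Local Lemma applied to a uniformly random colouring. My first step would be to reduce from the uncountable space $\R$ to a countable setting. Let $G\subset\R$ be the abelian group generated by $S-S$; it is finitely generated and torsion-free, so $G\cong\Z^t$ for some $t\le k$. Every translate $S+x$ and every reflection $-S+x$ lies inside a single coset of $G$ in $\R$, so it suffices to produce a partition $G=\bigsqcup_{i=1}^{n+1}C_i'$ with the desired property and then transport it to every coset via a transversal of $\R/G$.

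On $\Z^t$, I would colour each point independently and uniformly from $[n+1]$. For every triple $(y,i,\varepsilon)\in \Z^t\times[n+1]\times\{+1,-1\}$, the bad event $B_{y,i,\varepsilon}$ is that the $k+1$ points of $\varepsilon S'+y$ (where $S'\subset\Z^t$ is the image of $S$) all avoid colour $i$. Each bad event has probability $p=(1-\tfrac{1}{n+1})^{k+1}\approx e^{-k/n}$, depends on exactly $k+1$ coordinates, and is mutually independent from all but at most $d=O(nk^2)$ others, because every coordinate lies in $O(nk)$ bad events. A Local Lemma of the appropriate strength then produces a colouring avoiding every $B_{y,i,\varepsilon}$, which is exactly the sought-after partition of $\Z^t$.

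The main obstacle is the sharp leading constant $1$ in the asymptotic $k(n)=(1+o(1))n\ln n$. A direct application of the symmetric Lov\'asz Local Lemma requires $epd\le 1$ and only yields $k\gtrsim 3n\ln n$, off by a factor of three. To close this gap I would invoke the stronger panchromatic-colouring versions of the Local Lemma of Alon--K\v{r}\'\i\v{z}--Ne\v{s}et\v{r}il and Harris--Srinivasan cited in the statement, which are tuned exactly to bounded-degree uniform hypergraphs and shave the dependency contribution down to a lower-order term. Once one of these refined inequalities is verified for the hypergraph whose edges are the translates and reflections of $S'$ in $\Z^t$, it supplies the colouring $c$ of $G$, and the coset construction from the first step extends $c$ to the required partition of $\R$.
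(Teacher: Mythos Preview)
The paper does not prove this theorem; it is quoted as a known result from the literature (attributed to Harris--Srinivasan, with earlier versions due to Erd\H{o}s--Lov\'asz and Alon--K\v{r}\'\i\v{z}--Ne\v{s}et\v{r}il), so there is no in-paper proof to compare your proposal against. Your outline is nonetheless the standard approach to such panchromatic colouring results: reduce to a countable setting, colour uniformly at random, and apply a Local Lemma variant to avoid the bad events ``some translate/reflection misses colour $i$''. The reduction to $G=\langle S-S\rangle\cong\Z^t$ and the coset-extension step are sound, and your diagnosis that the symmetric LLL gives only $k\approx 3n\ln n$ while the sharp constant requires the refined inequality of Harris--Srinivasan is exactly right. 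In short, your sketch matches how the cited references obtain the result, but the paper itself simply invokes it.
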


Given $n \in \N$ and an infinite $\M \subset \R_{\infty}^n$, let us pick an arbitrary set of $k(n)^n+1$ of its points, where $k(n)$ is from the previous theorem. It follows from Proposition~\ref{prop no batons} that this finite set contains an isometric copy of some $(k(n)+1)$-point baton $\B$. It is clear that $\chi(\R_\infty^n, \M) \le \chi(\R_\infty^n, \B)$. Hence, to finish the proof of Proposition~\ref{prop upper for infinite intro}, it is sufficient to prove the following statement.

\begin{Proposition} \label{prop upper for long baton}
	Let $n \in \N$ and $\B \subset \R$ be an arbitrary baton of cardinality $k(n)+1$, where $k(n)$ is from Theorem~\ref{poly}. Then we have $\chi(\R_{\infty}^n, \B) \le n+1$.
\end{Proposition}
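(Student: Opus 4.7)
The plan is to combine Theorem~\ref{poly} with Lemma~\ref{lemma baton embed} to build an explicit $(n+1)$-colouring of $\R^n_\infty$. Apply Theorem~\ref{poly} to $S = \B$ (which has size $k(n)+1$) to obtain a partition $\R = \bigsqcup_{i=1}^{n+1} C_i$ with the property that every translate and every reflection of $\B$ in $\R$ meets every one of the $n+1$ parts. This partition is the only input from earlier results we will need besides Lemma~\ref{lemma baton embed}.

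Next, I would define the colouring as follows. For a point $\x = (x_1,\ldots,x_n) \in \R^n$, let $T(\x) \subseteq [n+1]$ be the set of indices $i$ such that $x_j \in C_i$ for some coordinate $j \in [n]$. Since $|T(\x)| \le n < n+1$, there is always at least one index missing from $T(\x)$, so we can define $c(\x) \in [n+1]$ to be, say, the smallest element of $[n+1] \setminus T(\x)$.

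To verify that this colouring has no monochromatic copy of $\B$, suppose for contradiction that $(\x^0, \ldots, \x^k)$ is such a copy in $\R^n_\infty$, all coloured by some $i \in [n+1]$. By Lemma~\ref{lemma baton embed}, there exists a coordinate $j^* \in [n]$ such that the projected sequence $(x^0_{j^*}, \ldots, x^k_{j^*}) \subset \R$ is itself an isometric copy of $\B$, i.e., a translate or reflection of $\B$. By the defining property of the partition from Theorem~\ref{poly}, this projected sequence intersects $C_i$, so there is some $r \in [k]_0$ with $x^r_{j^*} \in C_i$. But then $i \in T(\x^r)$, contradicting $c(\x^r) = i$, which required $i \notin T(\x^r)$.

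I do not expect a serious obstacle here: once one notices that the number $n+1$ of parts in Theorem~\ref{poly} matches the number of colours we want and exceeds the number of coordinates by exactly one, the pigeonhole-based choice of $c(\x)$ is forced, and Lemma~\ref{lemma baton embed} reduces the high-dimensional problem to the one-dimensional statement that Theorem~\ref{poly} provides. The only thing to double-check carefully is that Lemma~\ref{lemma baton embed} indeed produces a bona fide translate or reflection (not some weaker kind of isometric embedding) along the coordinate $j^*$, so that the $C_i$-meeting property applies verbatim; this is exactly the content of the first condition in that lemma.
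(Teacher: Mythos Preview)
Your proof is correct and is essentially the same argument as the paper's. The paper phrases it as covering $\R^n$ by the overlapping sets $A_i^n$ where $A_i = \R\setminus C_i$ (using Corollary~\ref{cor power free} to see each $A_i^n$ is $\B$-free, and pigeonhole to see the cover is complete), whereas you unpack this into an explicit colouring function $c(\x)=\min\big([n+1]\setminus T(\x)\big)$ and invoke Lemma~\ref{lemma baton embed} directly; these are the same proof in slightly different packaging.
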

\begin{proof}
	Apply Theorem~\ref{poly} with $\B$ playing the role of $S$ to find a specific partition $\R = \bigsqcup_{i=1}^{n+1}C_i$ of $\R$ into $n+1$ disjoint parts. Given $i \in [n+1]$, for all $x \in \R$, both sets $\B+x$ and $-\B+x$ intersects $C_i$ by construction. Thus, $A_i \coloneqq \R\!\setminus\!C_i \subset \R$ is $\B$-tr-free. Then $A_i^n \subset \R^n$ is $\B$-free by Corollary~\ref{cor power free}. So, we can assign an individual colour to each $A_i^n, i \in [n+1]$.
	
	To finish the proof, it remains only to observe that $\bigcup_{i=1}^{n+1}A_i^n = \R^n$. Indeed, assume the contrary, that is there exists $\x = (x_1,\,\dots\,,x_n) \in \R^n$ that does not belong to this union. Thus, for all $i \in [n+1]$, there is $j_i \in [n]$ such that $x_{j_i} \notin A_i$, i.e., that $x_{j_i} \in C_i$. By the pigeonhole principle, there exist $j \in [n]$ and distinct $i_1, i_2 \in [n+1]$ such that $x_j \in C_{i_1} \cap C_{i_2}$. Since $C_{i_1}$ and $C_{i_2}$ are disjoint by construction, this is a contradiction.
\end{proof}

\subsection{Proof of Theorem~\ref{cor geom baton intro}}

In the present subsection we show that the upper bound from Proposition~\ref{prop upper for infinite intro} is sometimes tight, i.e., that for all $n \in \N$ there is an infinite $\M \subset \R_{\infty}^n$ such that the chromatic number $\chi(\R_\infty^n, \M)$ equals $n+1$ exactly.

First, recall from the introduction that for $q>0$ we defined an infinite set of reals
\[\g(q) \coloneqq \{0,1,1+q,1+q+q^2,1+q+q^2+q^3,\,\dots\}\]
equipped with the standard metric on $\R$.
When the choice of $q$ is clear from the context, we simply write $\g$. For $i\in \mathbb Z_{\ge 0}$ and $q < 1$, let us put $t_i \coloneqq \frac {q^i}{1-q}$, and $T \coloneqq \{t_0,t_1,t_2,\,\ldots\}.$

We introduce some further related notions. We say that a set $\mathcal{C} \subset \R_{\infty}^n$ is a {\it proper isometric copy} of $\g$ if $\mathcal{C}$ is isometric copy of $\g$ and, moreover, there is $i \in [n]$ such that the $i$-th coordinates of the points of $\mathcal{C}$ form a translation of $-\g$ (in particular, the $i$-th coordinate of the limit point of $\mathcal{C}$ is smaller than all the $i$-th coordinates of the points of $\mathcal{C}$). In this case, we will also say that this proper isometric copy $\mathcal{C}$ has \emph{direction} $i$. For example, the set $T = \frac{1}{1-q}-\g \subset \R$ is a proper isometric copy of $\g$, while the set $\g\subset\R$ itself is not.

Let us recall the statement of Theorem \ref{cor geom baton intro} from the introduction. It states that for all $n \in \N$, there exists $q_0>0$ such that if $0<q\le q_0$, then $\chi(\R_\infty^n, \g(q)) = n+1.$ Taking into account the upper bound from Proposition~\ref{prop upper for infinite intro}, it remains only to get the lower one. Here we prove an even stronger statement.

\begin{Theorem} \label{thminf1}
	For any $n\in \mathbb N$ and $0<\varepsilon \le \frac{1}{2}$, there exists $q_0>0$ such that the following holds for any positive $q\le q_0$. In any $n$-colouring of the cube $\mathcal{H}=[-\varepsilon,1+\varepsilon]^n$ with the maximum metric we can find a monochromatic proper isometric copy of
	$\g(q).$
\end{Theorem}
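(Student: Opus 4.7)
The natural approach is induction on $n$. For the base case $n = 1$, choosing $q_0 \le \tfrac{2\varepsilon}{1+2\varepsilon}$ guarantees $\tfrac{1}{1-q} \le 1+2\varepsilon$, so the reflected set $(1+\varepsilon) - \g(q) \subset [-\varepsilon, 1+\varepsilon]$ is a proper copy of $\g(q)$ with direction $1$, automatically monochromatic under any $1$-colouring.

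For the inductive step with $n \ge 2$, I plan to look for a proper copy with direction $n$. By the natural infinite analogue of Lemma~\ref{lemma baton embed}, such a copy consists of points $p_0, p_1, p_2, \ldots$ whose $n$-th coordinates form $c - \sigma_0, c - \sigma_1, c - \sigma_2, \ldots$ and whose first $n-1$ coordinates satisfy the consecutive-difference constraint $|(p_j)_i - (p_{j-1})_i| \le q^{j-1}$ for each $i < n$. These form a Cauchy sequence converging to some $\y \in [-\varepsilon, 1+\varepsilon]^{n-1}$, with the first $n-1$ coordinates of $p_j$ lying in $[\y - t_j, \y + t_j]^{n-1}$ for $j \ge 1$. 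Notably, the tail $(p_1, p_2, \ldots)$ is itself isometric to $q \cdot \g(q)$, exposing the self-similar structure. Given an $n$-colouring $\chi$ of $\mathcal{H}$, the goal is to locate a colour $j$ and a limit point $\y$ such that each of the nested $(n-1)$-dimensional boxes $[\y - t_k, \y + t_k]^{n-1}$ at heights $c - \sigma_k$ contains a point of colour $j$ compatible with the required contracting structure.

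The main obstacle is that the inductive step concerns $n$ colours in an $n$-dimensional cube, while the inductive hypothesis yields monochromatic proper copies in an $(n-1)$-dimensional slice only when that slice uses at most $n-1$ colours; a generic colouring may force every hyperplane $H_c = \mathcal{H} \cap \{x_n = c\}$ to use all $n$ colours. The planned resolution is a descent-by-contradiction: assume no monochromatic proper copy exists, then argue that for each colour $j$ appearing on the top face $H_{1+\varepsilon}$, the failure of any colour-$j$ starting point $p_0$ to extend to a full proper copy must force colour $j$ to be effectively absent from some $(n-1)$-dimensional sub-cube at a lower height $1+\varepsilon - \sigma_k$. After rescaling this sub-cube by the appropriate power of $q$ — so that it becomes a cube of the form $[-\varepsilon', 1+\varepsilon']^{n-1}$ — the inductive hypothesis yields a monochromatic proper copy of $\g(q)$ inside it (with direction in $\{1, \ldots, n-1\}$), contradicting the original assumption. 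Making this precise requires a careful choice of $q_0 = q_0(n, \varepsilon)$ so that the rescaled sub-cube meets the inductive threshold $q_0' = q_0(n-1, \varepsilon')$, and verifying that the descent terminates at a finite depth $k$ rather than escaping to infinity; this combinatorial bookkeeping is the delicate technical step.
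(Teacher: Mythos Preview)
Your outline has the right inductive skeleton, but the descent step as stated does not close. If the sub-cube you find at height $1+\varepsilon-\sigma_k$ has side of order $q^k$, then after rescaling it to a cube of the form $[-\varepsilon',1+\varepsilon']^{n-1}$ the inductive hypothesis hands you a monochromatic proper copy of $\g(q)$ \emph{in the rescaled coordinates}. Back in the original coordinates this is a monochromatic copy of $q^k\cdot\g(q)$, i.e.\ only a \emph{tail} of $\g(q)$; it is not isometric to $\g(q)$ and hence gives no contradiction. You have found $\{\y^k,\y^{k+1},\ldots\}$ but are still missing the first $k$ points, and in particular the ``big'' step of length $1$.

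The paper's proof fills exactly this gap with two extra ideas you have not mentioned. First, one takes the \emph{minimal} $j$ for which an $(n-1)$-cube of side $(1+2\varepsilon)q^j$ near the corner avoids some colour; by minimality, every appropriately placed $(n-1)$-cube at the next larger scale $q^{j-1}$ contains all $n$ colours, so one can extend the inductively obtained tail $\mathcal C_j$ (a copy of $q^j\cdot\g$) backwards one step at a time to a monochromatic copy $\mathcal C_1$ of $q\cdot\g$. Second, to prepend the final point at distance~$1$, one passes to a full-size $(n-1)$-cube $H_0$ of side $1+\varepsilon$: either $H_0$ contains a point of the colour of $\mathcal C_1$ (and we are done), or $H_0$ avoids that colour and, being of side $\ge 1+2(\varepsilon/2)$, admits a direct application of the inductive hypothesis with parameters $(n-1,\varepsilon/2)$ without any rescaling. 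Your proposal needs both of these mechanisms; as written, the ``rescale and apply induction'' step only produces tails and never reaches a genuine copy of $\g(q)$.
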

\begin{proof}
	We prove the statement with $q_0(n,\varepsilon) = \frac{\varepsilon}{2^{n+1}}$ by induction on $n.$ The case $n=1$ is trivial, since $\text{diam}(\g) = \frac{1}{1-q} <1+2\varepsilon$. So, we turn to the induction step.
	
	Set $\sig=(1,\,\dots\,,1) \in \R^n$ and consider the colouring of $\mathcal H$ with colours from $[n]$. Without loss of generality, assume that the point $\x^0 \coloneqq (t_0,0,\,\ldots\,,0)+ \frac{\varepsilon}{2}\sig$ is coloured with colour $n$. (One can easily check that $\x^0 \in \mathcal H$ since $q\le \frac{\varepsilon}{8}$.) Let us consider the sequence of $(n-1)$-dimensional cubes  $I_i\subset \mathcal H,$ defined  for all $i \in \N$ as $I_i \coloneqq \{t_i\}\times [0, t_{i+1}]^{n-1}+\frac{\varepsilon}{2}\sig$, where the plus sign stands for the Minkowski sum. Assume that for every $i\in \mathbb N$ there is a point $\x^i\in I_i$ of colour $n$. It is easy to see that $|x^i_1-x^{i+1}_1| = t_i-t_{i+1} = q^i$, while for all $j \neq 1$ we have $|x^i_j-x^{i+1}_j| \leq t_{i+1}= \frac{q^{i+1}}{1-q}<q^i$, provided by the inequality $q \le \frac{\varepsilon}{8} < \frac{1}{8}$. Thus, $\|\x^i-\x^{i+1}\|_{\infty} = q^i$ for all $i \in \N$, and the set $\{\x^0,\x^1,\x^2,\,\ldots \}$ is a proper isometric copy of $\mathcal{G}$ having direction $1$ (see Lemma~\ref{lemma baton embed}). Moreover, it is entirely coloured with colour $n$ by construction, a contradiction. Therefore, we may assume that there is an $i \ge 1$ such that $I_i$ is entirely coloured with colours from $[n-1]$. 

	Let $j$ be the smallest positive integer such that there exists an axis parallel $(n-1)$-dimensional cube $H$ of side length $(1+2\varepsilon) q^{j}$ inside $\big[\frac{-\varepsilon}{2j},\frac\varepsilon{2j}\big]^n+ \frac{\varepsilon}{2}\sig \subset \mathcal H$ that is entirely coloured with some $n-1$ colours from $[n]$. Note that the cube $I_i$ satisfies $I_i\subset \big[\frac{-\varepsilon}{2(i+2)}, \frac\varepsilon{2(i+2)}\big]^n+ \frac{\varepsilon}{2}\sig$ (since $t_i < \frac{\varepsilon}{2(i+2)}$ whenever $q\leq \frac{\varepsilon}{8}$) and its side length $t_{i+1}$ is greater that $(1+2\varepsilon)q^{i+2}$ (since $\varepsilon < 1)$. Thus, such $j$ exists and does not exceed $i+2$.
	
	Now we show that there exists $\mathcal C_1\subset [0,\varepsilon]^n$, that is a monochromatic proper isometric copy of  $q\cdot \mathcal{G}$. Observe that if $j=1$, then $H \subset [\frac{-\varepsilon}{2},\frac{\varepsilon}{2}]^n+ \frac{\varepsilon}{2}\sig = [0,\varepsilon]^n$. Hence, the existence of such $\mathcal{C}_1$ holds by induction, applied to $H$ and the set $q\cdot \mathcal{G}$. So, let us assume that $j \ge 2$. By induction, applied to $H$ and the set $q^{j}\cdot \mathcal{G}$, we can find a monochromatic proper isometric copy $\mathcal C_j$ of $q^{j}\cdot \mathcal{G}$ inside $H$. Let $\mathcal C_j=\{\y^j,\y^{j+1},\, \ldots\}$. Consider the $(n-1)$-dimensional cube $H_{j-1}$ of points that are at distance $\sum_{s=j-1}^{m-1} q^{s}$ apart from the point $\y^{m}$, for any $m \ge j.$ In other words, if we assume that $\mathcal C_j$ has direction $1$, then
	\[H_{j-1}=\y^j+\{(x_1,\,\ldots\,, x_n)\in \R^n: x_1 = q^{j-1},\  -q^{j-1}\le x_i\le q^{j-1}\text{ for } 2\le i\le n\}.\]
	It is easy to check that $2q^{j-1}$, the side length of $H_{j-1}$, is not less than $(1+2\varepsilon)q^{j-1}$ (as far as $\varepsilon \le \frac{1}{2}$) and that
	$$H_{j-1}\subset \Big[\frac{-\varepsilon}{2(j-1)},\frac\varepsilon{2(j-1)}\Big]^n+\frac{\varepsilon}{2}\sig$$
	since $\frac{\varepsilon}{2j}+q^{j-1} \le \frac{\varepsilon}{2(j-1)}$ whenever $q \le \frac{\varepsilon}{8}$. Hence, by the minimality of the choice of $j$, the set $H_{j-1}$ has points of all colours. In particular, there is a point $\y^{j-1}\in H_{j-1}$ that is of the same colour as $\mathcal C_{j}$. 
	Define $\mathcal C_{j-1} \coloneqq \{\y^{j-1},\y^{j},\,\dots\}$ and note that $\mathcal{C}_{j-1}$ is a proper isometric copy of $q^{j-1}\cdot\g$. Repeat the same argument with $\mathcal C_{j-1}$ playing the role of $\mathcal C_j,$ obtaining a point $\y^{j-2}$ of the same colour contained in the $(n-1)$-dimensional cube $H_{j-2}\subset \big[\frac{-\varepsilon}{2(j-2)},\frac\varepsilon{2(j-2)}\big]^n+\frac{\varepsilon}{2}\sig$ of side length no less than $(1+2\varepsilon)q^{j-2}$, and put $\mathcal C_{j-2} \coloneqq \{\y^{j-2},\y^{j-1},\,\dots\}$. Run the same argument recursively until we get the desired $\mathcal C_1$, a monochromatic proper isometric copy of  $q\cdot \mathcal{G}$ inside $[\frac{-\varepsilon}{2},\frac{\varepsilon}{2}]^n+ \frac{\varepsilon}{2}\sig = [0,\varepsilon]^n$.
	
	Now we define $H_0$ in the same manner. Again, without loss of generality, we assume that $\mathcal C_1 = \{\y^1,\y^2,\,\dots\}$ has direction $1$. Then we put $H_0 \coloneqq \{1+y^1_1\}\times[-\varepsilon,1]^{n-1} \subset \mathcal{H}$. A priori, there are two possibilities. First, $H_0$ may contain a point $\y^0$ of the same colour as that of $\mathcal C_1$. In this case, it is not hard to check that $\{\y^0,\y^1,\,\dots\}$ is a monochromatic proper isometric copy of $\g$ since $\|\y^0-\y^1\|_{\infty} = y^0_1-y^1_1=1$ (again, see Lemma~\ref{lemma baton embed}). Second, $H_0$ may have no points of the colour of $\mathcal C_1$. In this case, since $H_0$ is an $(n-1)$-dimensional cube of side length $1+\varepsilon$, we can apply induction\footnote{We can indeed induct in such a way because $q_0(n,\varepsilon) = \frac{\varepsilon}{2^{n+1}} = \frac{\varepsilon/2}{2^n}= q_0(n-1,\varepsilon/2)$. Observe that this is the only place in the proof where we can not make $q_0$ independent of $n$.} with $n-1$ playing the role of $n$ and $\varepsilon/2$ playing the role of $\varepsilon$ to get a monochromatic proper isometric copy of $\g$ inside $H_0$. In any case, this concludes the proof.
\end{proof}

\subsection{Proof of Theorem~\ref{thminf2 intro}}

Finally, we show the existence of a {\it single} positive value of $q_0$ such that for any $0<q\le q_0$, the chromatic number $\chi(\R_{\infty}^n, \g(q))$ grows at last as fast as $\log_{3}n$ is. Since $\chi(\R_{\infty}^n, \g(q))$ is clearly non-decreasing as a function on $n$, we may assume without loss of generality that $n=3^k$ for some $k \in \N$. As in the previous subsection, here we prove an even stronger statement, that deals with a `small' $n$-dimensional cube only instead of dealing with the whole space $\R_\infty^n$.

\begin{Theorem}\label{thminf2}
	For any $0<\varepsilon<\frac{1}{2}$ there exists $q_0>0$ such that the following holds for any positive $q\le q_0$. If $n= 3^k$, then in any $k$-colouring of the cube $\mathcal{H} \coloneqq [-\varepsilon,1+\varepsilon]^n$ with the maximum metric one can always find a monochromatic proper isometric copy of $\mathcal{G}(q).$
\end{Theorem}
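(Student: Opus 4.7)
The plan is to prove Theorem \ref{thminf2} by induction on $k$, adapting the proof of Theorem \ref{thminf1} with $3^{k-1}$-dimensional axis-parallel subcubes replacing $(n-1)$-dimensional ones throughout. The base case $k=1$ is immediate: the cube $[-\varepsilon, 1+\varepsilon]^3$ is entirely monochromatic, and a proper isometric copy of $\g(q)$ (which has diameter $\frac{1}{1-q}$) fits into it whenever $q$ is small enough in terms of $\varepsilon$.

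For the inductive step, assume the conclusion for $k-1$ (in dimension $3^{k-1}$, with $k-1$ colours), consider a $k$-colouring of $\mathcal{H} = [-\varepsilon, 1+\varepsilon]^n$ for $n = 3^k$, and run the argument of Theorem \ref{thminf1} starting from the same point $\x^0 = (t_0, 0, \dots, 0) + \frac{\varepsilon}{2}\sig$ of some colour $c$ and the same tower cubes $I_i = \{t_i\} \times [0, t_{i+1}]^{n-1} + \frac{\varepsilon}{2}\sig$. If every $I_i$ contains a colour-$c$ point, these form the desired monochromatic copy. Otherwise some $I_i$ is $(k-1)$-coloured, and one takes $j$ to be the smallest positive integer such that some $3^{k-1}$-dimensional axis-parallel subcube of side $(1+2\varepsilon)q^j$ in the shrinking central region $[-\varepsilon/(2j), \varepsilon/(2j)]^n + \frac{\varepsilon}{2}\sig$ is coloured with at most $k-1$ colours; applying the inductive hypothesis to this subcube yields a monochromatic proper copy $\mathcal{C}_j$ of $q^j\g(q)$. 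One then extends $\mathcal{C}_j$ backwards through $\mathcal{C}_{j-1}, \dots, \mathcal{C}_1$: at level $m < j$, one passes to a $3^{k-1}$-dimensional axis-parallel subcube of the extension region $H_{m-1}$ of side at least $(1+2\varepsilon) q^{m-1}$, and by minimality of $j$ this subcube must carry all $k$ colours, supplying a point in the colour of $\mathcal{C}_j$. Finally, one extends $\mathcal{C}_1$ to $\mathcal{C}_0$: either $H_0$ contains a point of the right colour directly, or a $3^{k-1}$-dimensional subcube of $H_0$ is $(k-1)$-coloured, in which case the inductive hypothesis produces a fresh monochromatic proper copy of $\g(q)$ inside it. The only essential change from the proof of Theorem \ref{thminf1} is that every subcube handed to the inductive hypothesis has dimension $3^{k-1}$ rather than $n-1$, which is always possible because $n - 1 = 3^k - 1 \geq 3^{k-1}$ so one can fix the extra $2 \cdot 3^{k-1} - 1$ coordinates to arbitrary values, and the restriction of a colouring that misses one colour to such a subcube matches the inductive setting exactly.

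The main obstacle is ensuring that the constant $q_0$ depends only on $\varepsilon$ and not on $n = 3^k$. In the proof of Theorem \ref{thminf1} the final extension through $H_0$ forces the substitution $\varepsilon \mapsto \varepsilon/2$ at every level, because $H_0 = \{1 + y^1_1\} \times [-\varepsilon, 1]^{n-1}$ has side only $1+\varepsilon$; iterated over $k$ levels this would produce a $q_0$ decaying with $n$. The plan to sidestep this halving is to exploit the enormous dimensional slack: only $3^{k-1}$ of the $n-1$ free coordinates of $H_0$ are actually used by the inductive hypothesis, so one has freedom in the remaining $2 \cdot 3^{k-1} - 1$ coordinates. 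By positioning $\x^0$ and steering the copy $\mathcal{C}_1$ toward the corner of the central region in those coordinates that will eventually be used inside $H_0$, one can arrange for the chosen subcube of $H_0$ to span the full interval $[-\varepsilon, 1+\varepsilon]$ in each used direction, giving a subcube of side $1+2\varepsilon$; the inductive hypothesis then applies at the unchanged value of $\varepsilon$, and $q_0$ can be taken as a function of $\varepsilon$ alone.
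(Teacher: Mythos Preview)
Your plan has a real gap at the ``steering'' step. Once you have located the $(k-1)$-coloured $3^{k-1}$-dimensional cube $H$ at level $j$ and applied induction to obtain $\mathcal C_j=\{\y^j,\y^{j+1},\ldots\}$, you have no control over where $\y^j$ sits: the definition of $j$ only pins $H$ down to the central region of radius $\varepsilon/(2j)$ about your chosen centre. The backward extension then lets you move each coordinate of $\y^m$ by at most $q^m$ relative to $\y^{m+1}$, so the total steering available for $\y^1$ is at most $\sum_{m=1}^{j-1} q^m < q/(1-q)=O(q)$. For small $j$ (and $j$ can be $1$ or $2$, which is determined by the colouring, not by you) the residual error $\varepsilon/(2j)$ is of order $\varepsilon$, far larger than $q$. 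Hence $\y^1$ need not land at a point where the $S$-restricted $H_0$ spans the full $[-\varepsilon,1+\varepsilon]$; you are stuck with side length roughly $1+\varepsilon$ and the $\varepsilon\mapsto\varepsilon/2$ halving recurs. One can try to rescue this by building the $S$-coordinate constraint into the very definition of $j$ (quantify only over cubes whose $S$-coordinates are \emph{pinned} at the target value), but that is a substantive modification you do not describe, and it interacts delicately with the minimality argument.

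The paper resolves this obstacle by a completely different mechanism. It first strengthens the inductive statement with an ``$\varepsilon$-closeness to a vertex'' clause, then runs your first-phase procedure not from a single basepoint but from \emph{every} vertex $\vv$ of $\mathcal H$ with $|S(\vv)|<n/2$, producing for each such $\vv$ an $(n-1)$-cube $H_0^{\vv}$ missing one colour. A pigeonhole step aligns the direction and the missing colour for at least $2^{n-1}/(kn)$ vertices, and then the Sauer--Shelah lemma shows that the corresponding family of index sets shatters some $X\subset[n]$ of size $n/3=3^{k-1}$. The shattering is exactly what allows one to assemble a genuine $3^{k-1}$-dimensional cube $Q$ of side $1+2\varepsilon$: each dyadic subcube $Q_U$ of $Q$ is covered by the projection of $H_0^{\vv}$ for the vertex $\vv$ with $S(\vv)\cap X=U$, and the induced $(k-1)$-colouring of $Q$ is pieced together accordingly. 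Induction on $Q$ then runs at the unchanged value of $\varepsilon$, which is precisely what keeps $q_0$ independent of $k$. This VC-dimension argument is the main new idea over Theorem~\ref{thminf1} and is what your steering sketch is missing.
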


We need some preparation before the proof. Given a vertex $\vv$ of $\mathcal{H}$, we associate it with its `signum vector' $\sig^{\vv} = (\sigma^{\vv}_1,\,\dots\,,\sigma^{\vv}_n) \in \R^n,$ where $\sigma^{\vv}_i = 1$ if $v_i=-\varepsilon$ and $\sigma^{\vv}_i = -1$ if $v_i=1+\varepsilon$, $i \in [n]$. There is also a natural bijection between the vertices of $\mathcal{H}$ and the subset of $[n]$: we associate the vertex $\vv$ with the subset $S(\vv)=\{i: v_i=1+\varepsilon\}$. We will say that a set $A\subseteq \mathbb{R}^n$ is {\it $\varepsilon$-close to a vertex $\vv$} of the cube $\mathcal H=[-\varepsilon,1+\varepsilon]^n$ if any point $\mathbf{w}\in A$ satisfies $\|\mathbf{w}-\vv\|_\infty\le 2\varepsilon$. Note that if $\varepsilon <\frac{1}{2}$, then any set can be close to at most one vertex of $\mathcal H$.

Finally, we need some notions and theorems related to the VC-dimension of set systems. We say that $\mathcal{F}\subseteq 2^{[n]}$ \emph{shatters} $X\subseteq [n]$ if $\{F\cap X: F \in \mathcal{F}\}=2^{X}$. The {\it VC-dimension} of $\mathcal{F}$ is the size of the largest set shattered by $\mathcal{F}$. A well-known result of Sauer \cite{Sauer} , Shelah \cite{Shelah} and Vapnik--Chervonenkis \cite{Vapnik} bounds the cardinality of a family in terms of its VC-dimension.

\begin{Theorem}[Sauer, Shelah, Vapnik--Chervonenkis]
	If $\mathcal{F}\subseteq 2^{[n]}$ has VC-dimension at most $d$, then
	\[|\mathcal{F}|\leq \sum_{i=0}^d \binom{n}{i}.\]
\end{Theorem}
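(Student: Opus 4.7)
The plan is to prove the inequality by induction on $n+d$, exploiting the natural recursion obtained by conditioning on whether a fixed coordinate lies in a set of $\mathcal{F}$. The base cases are straightforward: when $d=0$, the family $\mathcal{F}$ shatters no nonempty set, which forces $|\mathcal{F}|\leq 1 = \binom{n}{0}$; when $n=0$, only $\mathcal{F}=\{\emptyset\}$ or $\mathcal{F}=\emptyset$ is possible, and both satisfy the bound.

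For the inductive step, I would single out the element $n\in [n]$ and split $\mathcal{F}$ into
\begin{equation*}
\mathcal{F}_0 = \{F\in \mathcal{F} : n\notin F\}, \qquad \mathcal{F}_1 = \{F\setminus\{n\} : F\in \mathcal{F},\ n\in F\},
\end{equation*}
viewing both as subfamilies of $2^{[n-1]}$. The elementary identity
\begin{equation*}
|\mathcal{F}| \;=\; |\mathcal{F}_0| + |\mathcal{F}_1| \;=\; |\mathcal{F}_0\cup \mathcal{F}_1| + |\mathcal{F}_0\cap \mathcal{F}_1|
\end{equation*}
reduces the problem to bounding these two auxiliary quantities.

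The union $\mathcal{F}_0\cup \mathcal{F}_1$ is exactly the trace $\{F\cap [n-1] : F\in \mathcal{F}\}$, hence its VC-dimension is at most $d$, and the induction hypothesis yields $|\mathcal{F}_0\cup \mathcal{F}_1|\leq \sum_{i=0}^d \binom{n-1}{i}$. The crux of the argument is to verify that the intersection $\mathcal{F}_0\cap \mathcal{F}_1$ has VC-dimension at most $d-1$: if some $X\subseteq [n-1]$ were shattered by $\mathcal{F}_0\cap \mathcal{F}_1$, then for every $Y\subseteq X$ there exists $F$ with $F\cap X = Y$ for which \emph{both} $F$ and $F\cup\{n\}$ lie in $\mathcal{F}$; this would force $X\cup\{n\}$ to be shattered by $\mathcal{F}$, so $|X|+1\leq d$. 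A second application of the induction hypothesis then gives $|\mathcal{F}_0\cap \mathcal{F}_1|\leq \sum_{i=0}^{d-1} \binom{n-1}{i}$.

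Combining the two estimates and invoking Pascal's identity $\binom{n-1}{i}+\binom{n-1}{i-1}=\binom{n}{i}$ produces the desired bound $|\mathcal{F}|\leq \sum_{i=0}^d \binom{n}{i}$. The main (and essentially only) nontrivial step is the VC-dimension bound for $\mathcal{F}_0\cap \mathcal{F}_1$ via the ``doubling'' argument above; everything else is bookkeeping around a decomposition plus a binomial identity.
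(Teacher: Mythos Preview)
Your proof is correct and is the standard inductive argument for the Sauer--Shelah--Vapnik--Chervonenkis lemma. Note, however, that the paper does not supply its own proof of this theorem: it is stated as a classical result with references to Sauer, Shelah, and Vapnik--Chervonenkis, and is then used as a black box (via Corollary~\ref{VC}) in the proof of Theorem~\ref{thminf2}. So there is nothing in the paper to compare against beyond the citation; your write-up simply fills in a well-known proof that the authors chose to omit.
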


As a direct corollary, we obtain that if $|\mathcal{F}|$ is large, then $\mathcal{F}$ shatters a large set.

\begin{Corollary}\label{VC} If $|\mathcal{F}|> \sum_{i=0}^d\binom{n}{i}$, then there is a subset $X\subseteq [n]$ of size $d+1$ that is shattered by $\mathcal{F}$. In particular, if $k \ge 4$, $n=3^k$, and $|\mathcal{F}|\geq \frac{2^{n-1}}{kn}$, then $\mathcal{F}$ shatters a set of size $\frac{n}{3}$.
\end{Corollary}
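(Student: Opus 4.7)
The first assertion is just the contrapositive of the Sauer--Shelah--Vapnik--Chervonenkis bound stated above: if $\mathcal{F}$ shattered no $(d+1)$-element set, its VC-dimension would be at most $d$, and hence $|\mathcal{F}| \leq \sum_{i=0}^{d}\binom{n}{i}$, contradicting the hypothesis. So the real content is the second, quantitative claim, and to derive it I would apply the first assertion with $d = n/3 - 1$ once the purely numerical inequality
\begin{equation*}
\sum_{i=0}^{n/3 - 1} \binom{n}{i} \,<\, \frac{2^{n-1}}{kn} \qquad \text{for every integer } k\ge 4 \text{ with } n = 3^k
\end{equation*}
has been checked.

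For the latter I plan to combine a geometric-series argument with a sharp Stirling estimate. First observe that for $i \leq n/3 - 2$ one has $\binom{n}{i+1}/\binom{n}{i} = (n-i)/(i+1) > 2$, so the terms of the sum decrease at least geometrically and
\begin{equation*}
\sum_{i=0}^{n/3-1}\binom{n}{i} \,<\, 2\binom{n}{n/3-1} \,<\, \binom{n}{n/3},
\end{equation*}
where the last inequality uses $\binom{n}{n/3} = \binom{n}{n/3-1}\cdot (2n/3+1)/(n/3) > 2\binom{n}{n/3-1}$. Next, I would invoke the classical Stirling-type bound $\binom{n}{n/3} \leq \frac{3}{2\sqrt{\pi n}}\cdot 2^{nH(1/3)}$, where $H(1/3) = \log_2 3 - 2/3$ is the binary entropy. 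Substituting and rearranging reduces the target inequality to
\begin{equation*}
\frac{3 k \sqrt{n}}{2\sqrt{\pi}} \,<\, 2^{\delta n - 1}, \qquad \delta \coloneqq 1 - H(1/3) = \tfrac{5}{3} - \log_2 3 \,>\, 0.08.
\end{equation*}

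Since $n = 3^k$, the right-hand side here grows doubly exponentially in $k$ while the left-hand side is only singly exponential, so the inequality is essentially trivial once $k$ is at all large. The one place the argument is tight is the base case $k=4$ ($n=81$): here $\log_2$ of the right-hand side is roughly $5.6$ while $\log_2$ of the left-hand side is roughly $4.9$, so the gap is small but strictly positive and a direct numerical check closes it. I expect this base case to be the only subtlety; in particular, the weaker entropy bound $\binom{n}{pn} \leq 2^{nH(p)}$ without the $1/\sqrt{n}$ factor would already fail at $n=81$, which is exactly why a Stirling-quality estimate, rather than a crude Chernoff-type bound, is needed.
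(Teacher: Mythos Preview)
The paper does not actually supply a proof of this corollary: it simply states it as ``a direct corollary'' of the Sauer--Shelah--Vapnik--Chervonenkis theorem and leaves the numerical verification of the ``in particular'' clause implicit. Your proposal therefore fills in a computation the authors chose to omit, and it does so correctly. The contrapositive reformulation for the first assertion is exactly right; the geometric-series step giving $\sum_{i\le n/3-1}\binom{n}{i}<\binom{n}{n/3}$ is clean; and your observation that the naive entropy/Chernoff bound (without the $1/\sqrt{n}$ factor) just barely fails at $n=81$, so that a genuine Stirling estimate is required, is both accurate and a nice diagnostic remark. The final numerics at $k=4$ check out with a small but definite margin (roughly $2^{70.9}$ versus $2^{71.7}$), and for $k\ge 5$ the doubly-exponential growth of $2^{\delta\cdot 3^k}$ makes the inequality trivial, as you say.

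Two minor points worth tightening if you write this up formally. First, the Stirling-type inequality $\binom{n}{n/3}\le \frac{3}{2\sqrt{\pi n}}\,2^{nH(1/3)}$ should be attributed (e.g.\ via Robbins' explicit bounds on $n!$), since the constant matters at $n=81$. Second, ``a direct numerical check closes it'' would be better replaced by the one-line explicit comparison, since the margin is under a factor of~$2$.
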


\begin{proof}[Proof of Theorem~\ref{thminf2}]
	We will prove the following stronger statement by induction on $k$:
	
	\medskip
	
	If $n=3^k$, then in any $k$-colouring of the cube $\mathcal H=[-\varepsilon,1+\varepsilon]^n$ with the maximum metric one can always find a monochromatic proper isometric copy $\{\x^0,\x^1,\,\ldots\}$ of $\mathcal{G}$ such that $\{\x^1,\x^2,\,\ldots\}$ is $\varepsilon$-close to some vertex of the cube $\mathcal{H}$.
	
	\medskip
	
	To prove that base of induction, observe that the additional requirement on the $\varepsilon$-closeness property is satisfied by the monochromatic proper isometric copies of $\mathcal{G}$ that we found in the proof of Theorem~\ref{thminf1}. Thus, the statement is true for any $k\le 3$ with $q_0$ as is guaranteed in that theorem, namely, $q_0 = \frac{\varepsilon}{2^{3+1}} = \frac{\varepsilon}{16}$. So, from now on we may assume that $k \ge 4$. We may also suppose that $[k]$ is the set of colours.
	
	The proof of the induction step builds on the proof of Theorem~\ref{thminf1}. First, we will run a procedure similar to the one in the previous proof starting from a vertex $\vv$ of $\mathcal{H}$. Namely, we will find an $\varepsilon$-close to $\vv$ monochromatic proper isometric copy $\mathcal{C}_1=\mathcal{C}_1^{\vv}=\{\y^1,\y^2,\,\dots\}$ of $q\cdot \mathcal{G}$. Despite the fact that this part is almost identical to the one from the previous proof, it still will be helpful to spell it out in this more general form. Second, we will use these copies of $q\cdot \mathcal{G}$ to find the desired monochromatic proper isometric copy of $\g$ with the additional requirement on the $\varepsilon$-closeness property. Note that this step was quite simple in the proof of Theorem~\ref{thminf1}. However, this will be the hard part of the current proof.

Fix a vertex $\vv$ of $\mathcal H$ such that $|S(\vv)| < \frac{n}{2}$. We may assume without loss of generality that $1\notin S(\vv)$, that is $v_1= -\varepsilon$. Put $\x^0 \coloneqq \vv+(t_0,0,\,\dots\,,0)+ \frac{3\varepsilon}{2}\sig^{\vv}$. Further, let $c=c(\x^0)$ be the colour of $\x^0$. We consider the sequence of $(n-1)$-dimensional cubes  $I_i\subset \mathcal H,$ defined  for all $i \in \N$ as
\begin{equation*}
	I_i \coloneqq \vv+ \{t_i\}\times [0, t_{i+1}]^{n-1}+\frac{3\varepsilon}{2}\sig^{\vv}.
\end{equation*}
Assume that for every $i\in \mathbb N$ there is a point $\x^i\in I_i$ of colour $c$. Then the set $\{\x^0,\x^1,\x^2,\,\ldots \}$ is a monochromatic proper isometric copy of $\mathcal{G}$ such that $\{\x^1,\x^2,\,\dots\}$ is $\varepsilon$-close to $\vv$. (Where this last property holds since $q\leq \frac{\varepsilon}{8}$.) Therefore, we may assume that there is an $i\ge 1$ such that the $(n-1)$-dimensional cube $I_i$ is entirely coloured with colours from $[k]\!\setminus\! \{c\}$.

Let $j$ be the smallest positive integer such that there exists an axis parallel $(n-1)$-dimensional cube $H'$ of side length $(1+2\varepsilon) q^{j}$ inside $\vv+\big[\frac{-\varepsilon}{2j},\frac\varepsilon{2j}\big]^n+ \frac{3\varepsilon}{2}\sig^{\vv} \subset \mathcal H$ that is entirely coloured with some $k-1$ colours from $[k]$. As in the proof of Theorem~\ref{thminf1}, observe that the cube $I_i$ satisfies $I_i\subset \vv+ \big[\frac{-\varepsilon}{2(i+2)}, \frac\varepsilon{2(i+2)}\big]^n+ \frac{3\varepsilon}{2}\sig^{\vv}$, has side length $t_{i+1}$ being greater that $(1+2\varepsilon)q^{i+2}$, and entirely coloured with colours from $[k]\!\setminus\! \{c\}$. Thus, such $j$ exists and does not exceed $i+2$.

Let $H$ be an $\frac{n}{3}$-dimensional face of $H'$ orthogonal to all the basic vectors $e_{r}$ such that $r \in S(\vv)$. Note that the existence of this face follows from the inequality $n-1-|S(\vv)| > \frac{n}{3}$ since $|S(\vv)| < \frac{n}{2}$.

Now we show that there exists a monochromatic proper isometric copy $\mathcal C_1=\mathcal{C}^{\vv}_1$ of  $q\cdot \mathcal{G}$ inside $\vv+ \big[\frac{-\varepsilon}{2}, \frac{\varepsilon}{2}\big]^n+ \frac{3\varepsilon}{2}\sig^{\vv}$. Observe that if $j=1$, then the existence of such $\mathcal{C}_1$ holds by induction applied with $k-1$ playing the role of $k$ to $H$ and the set $q\cdot \mathcal{G}$. (The induction indeed can be applied since $\dim H = 3^{k-1}$.) Note that for the direction $\ell=\ell(\vv)$ of this $\mathcal{C}_1$ we clearly have $\ell \notin S(\vv)$ by construction of $H$. So, let us assume that $j \ge 2$. By induction applied with $k-1$ playing the role of $k$ to $H$ and the set $q^j\cdot \mathcal{G}$, we can find a monochromatic proper isometric copy $\mathcal C_j = \{\y^j,\y^{j+1},\, \ldots\}$ of $q^{j}\cdot \mathcal{G}$ inside $H$. Consider the $(n-1)$-dimensional cube $H_{j-1}$ of points that are at distance $\sum_{s=j-1}^{m-1} q^{s}$ apart from the point $\y^{m}$, for any $m \ge j.$ In other words, if we assume that $\mathcal C_j$ has direction $\ell=\ell(\vv)$ (again, we have $\ell \notin S(\vv)$ by construction), then
\[H_{j-1}=\y^j+\big\{(x_1,\,\ldots\,, x_n)\in \R^n: x_\ell = q^{j-1},\  -q^{j-1}\le x_i\le q^{j-1}\text{ for } i\in [n]\!\setminus\!\{\ell\}\big\}.\]
As in the proof if Theorem~\ref{thminf1}, it is easy to check that $2q^{j-1}$, the side length of $H_{j-1}$, is not less than $(1+2\varepsilon)q^{j-1}$ and that
$$H_{j-1}\subset \vv+ \Big[\frac{-\varepsilon}{2(j-1)},\frac\varepsilon{2(j-1)}\Big]^n+ \frac{3\varepsilon}{2}\sig^{\vv}.$$
Hence, by the minimality of the choice of $j$, the set $H_{j-1}$ has points of all colours. In particular, there is $\y^{j-1}\in H_{j-1}$ that is of the same colour as $\mathcal C_{j}$. 
Define $\mathcal C_{j-1} \coloneqq \{\y^{j-1},\y^{j},\,\dots\}$ and note that $\mathcal{C}_{j-1}$ is a monochromatic proper isometric copy of $q^{j-1}\cdot\g$ which has the same direction $\ell$ as $\mathcal{C}_j$. Repeat the same argument with $\mathcal C_{j-1}$ playing the role of $\mathcal C_j,$ obtaining a point $\y^{j-2}$ of the same colour contained in the $(n-1)$-dimensional cube $H_{j-2}\subset \vv+ \big[\frac{-\varepsilon}{2(j-2)},\frac{\varepsilon}{2(j-2)}\big]^n+ \frac{3\varepsilon}{2}\sig^{\vv}$ of side length no less than $(1+2\varepsilon)q^{j-2}$, and put $\mathcal C_{j-2} \coloneqq \{\y^{j-2},\y^{j-1},\,\dots\}$. Run the same argument recursively until we get the desired $\mathcal C_1$, a monochromatic proper isometric copy of  $q\cdot \mathcal{G}$ inside $\vv+ \big[\frac{-\varepsilon}{2}, \frac{\varepsilon}{2}\big]^n+ \frac{3\varepsilon}{2}\sig^{\vv}$.

Observe that $\mathcal C_1 = \{\y^1,\y^2,\,\dots\}$ is $\varepsilon$-close to $\vv$ by construction. Moreover, recall that its direction $\ell(\vv)$ does not belong to $S(\vv)$. In particular, this implies that $0 \le y^1_{\ell(\vv)}\le \varepsilon$. Thus, an axis parallel $(n-1)$-dimensional cube $H_0 = H_0^{\vv}$ of side length $1+\varepsilon$ defined by
\begin{align*}
	H_0^{\vv} \coloneqq \Big\{(x_1,\,\dots\,,x_n) \in \R^n: x_{\ell(\vv)}=1+y^1_{\ell(\vv)}, &\ 0 \le x_i \le 1+\varepsilon \mbox{ for } i \in S(\vv), \\
	 &-\varepsilon \le x_i \le 1 \mbox{ for } i \in [n]\!\setminus\! S(\vv)\! \setminus\! \{\ell(\vv)\}\Big\}
\end{align*}
is a subset of $\mathcal{H}$. If $H_0$ contains a point $\y^0$ of the same colour as that of $\mathcal C_1$, then the set $\{\y^0,\y^1,\,\dots\}$ is the desired monochromatic proper isometric copy of $\g$ such that $\{\y^1,\y^2,\,\dots\}$ is $\varepsilon$-close to $\vv$, and we are done. Indeed, this follows from the fact that $\|\y^0-\y^1\|_{\infty} = y^0_{\ell(\vv)}-y^1_{\ell(\vv)}=1$ (see Lemma~\ref{lemma baton embed}). Thus, we may assume that $H_0$ contains no point of the colour of $\mathcal C_1.$

Run the same procedure from any of $2^{n-1}$ vertices $\vv$ of $\mathcal{H}$ such that $|S(\vv)|< \frac{n}{2}$. This gives us a set of axis parallel $(n-1)$-dimensional cubes $H_0^{\vv}$ each of which is coloured with some $k-1$ colours from $[k]$. By the pigeonhole principle, we can choose a set $L$ of at least $\frac{2^{n-1}}{kn}$ of these vertices such that the following two statements hold. First, there is a direction, say $n$, such that $\ell(\vv)=n$ for any $\vv \in L$. Second, there is a colour, say $k$, such that $H_0^{\vv}$ is coloured with colours from $[k]\!\setminus\!\{k\} = [k-1]$ for any $\vv \in L$.

Let $S(L)=\{S(\vv): \vv\in L\} \subset 2^{[n]}$. Corollary~\ref{VC} implies that $S(L)$ shatters a subset $X\subset [n]$ of size $\frac{n}{3}$. Without loss of generality we may assume that $X = \{1,\,\ldots\,, \frac{n}{3}\}$. Choose a subset $L' \subset L$ of size $2^{n/3}$ such that for any $U \subseteq X$ there is a unique $\vv \in L'$ such that $S(\vv)\cap X = U$.

Consider the $\frac{n}{3}$-dimensional cube $Q$ of side length $1+2\varepsilon$, defined by
\begin{equation*}
	Q \coloneqq \Big\{(x_1,\,\dots\,,x_n) \in \R^n: -\varepsilon \le x_i \le 1+\varepsilon \mbox{ for } 1 \le i \le \frac{n}{3}, x_i = 0 \mbox{ for }  \frac{n}{3} < i < n, \ x_{n}=1+\varepsilon \Big\}.
\end{equation*}
Note that there is a natural bijection between the vertices of $Q$ and the subsets of $X$, similar to the one between the vertices of $\mathcal{H}$ and the subsets of $[n]$. Partition $Q$ into $2^{n/3}$ subcubes using hyperplanes $x_i=\frac{1}{2}$ for each $i\in X$, obtaining the partition
\begin{equation*}
	Q = \bigcup_{U\subseteq X} Q_U,
\end{equation*}
where $Q_U$ contains the vertex of $Q$ that corresponds to the set $U$. (We distribute the boundary parts arbitrarily.)

Let $F$ be a facet of $\mathcal H$ defined by the equation $x_n=1+\varepsilon$. Clearly, $Q$ is a subset of $F$. Moreover, given $\vv \in L'$, recall that $\ell(\vv)=n$. Therefore, $H_0^{\vv}$ is parallel to $F$ and the distance between them does not exceed $\varepsilon$ by construction. Thus, for all $\vv',\vv'' \in L'$, the distance between $H_0^{\vv'}$ and $H_0^{\vv''}$ does not exceed $\varepsilon$ as well.

Let $\pi: \mathcal{H}\to F$ be the orthogonal projection on the face $F$. Observe that for any $U \subseteq X$, we have $Q_U \subset \pi(H_0^{\vv})$, where $\vv$ is the unique element of $L'$ such that $S(\vv) \cap X = U$. Indeed, fix $\z \in Q_U$. Since $\pi(H_0^{\vv})$ is defined by the set of linear inequalities, it is sufficient to check that $\z$ satisfies them all to conclude that $\z \in \pi(H_0^{\vv})$. If $i \in U \subset S(\vv)$, then we clearly have $\frac{1}{2} \le z_i \le 1+\varepsilon$, which is stronger than the inequalities $0\le z_i \le 1+\varepsilon$ required by $\pi(H_0^{\vv})$. Similarly, if $i \in X\!\setminus\!U = X\!\setminus\!S(\vv)$, then $-\varepsilon \le z_i \le \frac{1}{2}$, which is again stronger than the inequalities $-\varepsilon\le z_i \le 1$ required by $\pi(H_0^{\vv})$. Further, if $\frac{n}{3}<i<n$, then $z_i=0$, and so there are no problems with these indexes as well. Finally, since both $Q_U$ and $\pi(H_0^{\vv})$ are subsets of $F$, there is nothing to check for $i=n$. Hence, we conclude that $\z \in \pi(H_0^{\vv})$.

Now we define a (new) colouring of $Q$, which is induced by the given colourings of the $H_0^{\vv}$'s as follows. Given $\z\in Q_U$, we find a unique $\vv \in L'$ such that $S(\vv)\cap X = U$, and a unique $\y\in H_0^{\vv}$ for which  $\pi(\y)=\z$. (Let us stress here once more that $\pi$ is the same for every $\vv$ and $U$, and that we do not orthogonally project on $Q_U$ itself, but only on the facet $F$.) Then we colour $\z$ with the colour of $\y$ and take the union of these colourings over all $U \subseteq X$.

Note that the side length of $Q$ equals $1+2\varepsilon$ and that $\dim Q = 3^{k-1}$. The new colouring of $Q$ constructed above uses colours only from $[k-1]$ (since we assumed that $H_0^{\vv}$ does not have a point of colour $k$ for any $\vv \in L'$). Thus, we can apply the induction with $k-1$ playing the role of $k$ to find a monochromatic proper isometric copy $\mathcal{C}_0 = \{\z^0, \z^1,\,\ldots\}$ of $\mathcal{G}(q)$ in $Q$ such that $\mathcal{C}_1 = \{\z^1,\z^2,\,\ldots\}$ is $\varepsilon$-close to some vertex $\w'$ of $Q$.

Using $\mathcal{C}_0$, we will find a monochromatic proper isometric copy of $\mathcal{G}(q)$ in the original colouring. First, consider the subset $U' \subseteq X$ that corresponds to $\w'$ and let $\vv'\in L'$ be the unique vertex such that $S(\vv')\cap X=U'$. Since $\mathcal{C}_1$ is $\varepsilon$-close to $\w'$, it is not hard to see that $\mathcal{C}_1$ is contained in $Q_{U'}$. Thus, there is a monochromatic set $\{\y^1,\y^2 ,\,\ldots\}\subset H_0^{\vv'},$ such that $\pi(\y^i) = \z^i$ holds for every $i$. Further, assume that $\z^0\in Q_{U''}$ for some other $U'' \subseteq X.$ Let $\vv''\in L'$ be the unique vertex such that $S(\vv'')\cap X = U''$ and let $\y^0\in H_0^{\vv''}$ be the point such that $\pi(\y^0) = \z^0.$ Recall that the hyperplanes $H_0^{\vv'}$ and $H_0^{\vv''}$ are parallel and the distance between them does not exceed $\varepsilon$. Thus, it is easy to see that the projection $\pi$ does not change the distances between $\y^i$'s. Thus, $\{\y^0,\y^1 ,\,\ldots\}$ is a monochromatic proper isometric copy of $\mathcal{G}(q)$ in the original colouring which has the same colour and the same direction as $\mathcal{C}_0$.

The final thing to remark is that $\{\y^1,\y^2 ,\,\ldots\}$ is $\varepsilon$-close to some vertex of $\mathcal H$, namely, the vertex that corresponds to the subset $U'\cup\{n\} \subset [n]$. This completes the proof.
\end{proof}

\section{Concluding remarks}\label{sec7}

Although we found the base of the exponent  $\chi_\M$ such that $\chi(\R_{\infty}^n,\M) =(\chi_\M+o(1))^n$ as $n \rightarrow \infty$ for all batons and for Cartesian products of arithmetic progressions, there is still a polynomial gap between the lower and upper bounds on  the  chromatic number. In particular, given $k,m \in \N$, Theorem~\ref{Chi Bkm} implies that $c_1n^{-m+1} \le \chi(\R_{\infty}^n,\B_k^m)\cdot\left(\frac{k+1}{k}\right)^{-n} \le c_2n$, where the values of $c_1=c_1(k,m)$ and $c_2=c_2(k,m)$ are independent of $n$. For a `rectangle' $\B(1)\times\B(2)$, one can easily deduce from the proof of Theorem~\ref{th chi grid} that $\frac{2}{n+2}\le \chi(\R_{\infty}^n,\B(1)\times\B(2))\cdot2^{-n} \le 1$. It would be interesting to get a polynomial improvement of some of these bounds.

For most other metric spaces $\M$ we do not even know the exact value of $\chi_\M$. One of the simplest examples is the `triangle' $\mathcal{T}(a,b,c)$ (i.e., a three-point metric space) with side lengths being $a, b$ and $c$. One may use an embedding $\mathcal{T}(2,2,3) \subset \B(1,2)\times\B(2)$ to get a lower bound for $\mathcal{T}(2,2,3)$. A positive answer to Problem~\ref{prb111} would imply that $\chi_{\mathcal{T}(2,2,3)} \ge 5/3$, while the  upper bound $\chi_{\mathcal{T}(2,2,3)} \le 5/3$ is immediate from a general simple result of the last two authors (see \cite{KupSag}, Theorem~9). However, this seems to be more of a coincidence and does not work out in general. An embedding $\mathcal{T}(2,3,4) \subset \B(2,2)\times\B(3)$ along with Theorem~\ref{th chi grid} gives $\chi_{\mathcal{T}(2,3,4)} \ge 3/2$, while the upper bound we can get is $\chi_{\mathcal{T}(2,3,4)} \le 7/4$. It is not clear for us which bound is closer to the truth. Let us mention that the `planar' version of this problem was solved for all non-degenerate triangles in the subsequent paper~\cite{NatSag}.

Recall that, as we observed in the proof of Proposition~\ref{prop upper for infinite intro}, the requirement on $\M$ to be infinite is not really needed in order to prove that $\chi(\R_{\infty}^n,\M) \le n+1$. Instead, it is sufficient to have $|\M| \ge n^{n+o(n)}$. It would be interesting to improve this bound and find the smallest value $f(n)$ such that $\chi(\R_{\infty}^n,\M) \le n+1$ whenever $|\M|\ge f(n)$. The best lower bound we have is $f(n) \ge \frac{2^n}{n+1}$. Indeed, if $\M$ consists of  fewer than $\frac{2^n}{n+1}$ points at unit distance apart, then  we clearly need at least $n+2$ colours even for a colouring of $\{0,1\}^n$ without a monochromatic copy of $\M$. Note that the finitude of the Euclidean counterpart of this $f(n)$, i.e. the existence of $g(n) \in \N$ such that $\chi(\R_2^n,\M)=2$ provided $|\M|\ge g(n)$, is an old problem of Erd\H{o}s which remains open even for $n=2$, see \cite[Section~3]{ConFox}. 

Some of the most exciting open problems concern infinite sets. The first problem here is to  improve the logarithmic lower bound in Theorem~\ref{thminf2 intro}. We conjecture that  for all sufficiently small $q$, the value $\chi(\R_{\infty}^n,\g(q))$ equals exactly $n+1$ for all $n \in \N$.
It follows from Theorem~\ref{thminf2 intro} that geometric progressions $\g(q)$ are Ramsey for all sufficiently small $q$ (namely, for all $q < 1/32$) since the value $\chi(\R_{\infty}^n,\g(q))$ grows with at least logarithmic speed. On the other hand, it was shown in the subsequent paper~\cite{KirSag} that $\chi(\R_{\infty}^n,\g(q))=2$ if $q$ is sufficiently close to $1$ (in terms of $n$). We wonder what is the `threshold' for $\g(q)$ to be Ramsey. 

Finally, it would be interesting to study related questions in other normed spaces. For a given norm $N$ in $\R^n$, what is the largest chromatic number $\chi(\R^n_N, \M)$, where $\M \subset \R_{N}^n$ is infinite? Can this maximum exceed $n+1$ for some norm? A partial answer to these questions in case $n=2$ was obtained in a subsequent paper~\cite{FGST}. 

\section*{Acknowledgments} Part of this work was done while the first author was supported by an LMS Early Career Fellowship. The first  and the third authors were also partially supported by ERC Advanced Grant "GeoScape" No. 882971. The second author is supported by the \href{https://rscf.ru/project/21-71-10092/}{RSF grant N 21-71-10092}. The third author is a winner of August M\"obius Contest and Young Russian Mathematics Contest and would like to thank its sponsors and jury.

\end{document}